\titleformat{\section}[block]{\bfseries}{\thesection.}{1em}{}
\titleformat{\subsection}[block]{\bfseries}{\thesubsection.}{1em}{}
\newtheorem{thm}{Theorem}
\newtheorem{cor}{Corollary}
\newtheorem{defi}{Definition}
\newtheorem{nt}{Notation}
\newtheorem{lem}{Lemma}
\newtheorem{rmk}{Remark}
\newcommand\floor[1]{\lfloor#1\rfloor}
\newcommand\genby[1]{\langle#1\rangle}
\title{Irredundant generating sets and dimension-like invariants of the finite group}
\author{Minh Nguyen} 
\begin{document}
\maketitle
\begin{abstract}
Whiston proved that the maximum size of an irredundant generating set in the symmetric group $S_n$ is $n-1$, and Cameron and Cara characterized all irredundant generating sets of $S_n$ that achieve this size. Our goal is to extend their results. Using properties of transitive subgroups of the symmetric group, we are able to classify all irredundant generating sets with sizes $n-2$ in both $A_n$ and $S_n$. Next, based on this classification, we derive other interesting properties for the alternating group $A_n$. Finally, using Whiston's lemma, we will derive the formulas for calculating dimension-like invariants of some specific types of wreath products.
\end{abstract}
\newpage
\tableofcontents
\setlength{\parskip}{0pt}

\newpage
\section{Introduction}
This paper is the report for summer undergraduate research program at Cornell University. The paper mainly deals with an irredundant generating set and dimension-like invariants of a finite group. The paper consists of three main parts.

In the first part, we will classify all irredundant generating sets of length $n - 2$ in $A_n$. Then, from this classification, we will prove many other properties of the alternating group $A_n$ 

Cameron and Cara, in their paper, classify all irredundant generating sets of length $n - 1$ in $S_n$. In the second part of this paper, we will try to study a more general problem, which is the classification of all irredundant generating sets of arbitrary length in $S_n$.

In the final part, we will derive the formulas for calculating dimension-like invariants of some specific types of wreath products. 

We start with definitions of dimension-like invariants for a finite group.

\begin{defi}
For a finite group $G$, and a set $H \subset G$, let $\genby{H}$ denote the subgroup of $G$ generated by $H$.
 
For a finite group $G$, the set $s = (g_1, . . . , g_n)$ of elements of G is called irredundant if  $\genby{\{g_i\}_{i \neq j}} \neq \genby{\{g_i\}}\ \forall j \in \overline{1, n}$

Let $i(G)$ be the maximal size of an irredundant set in $G$, and $m(G)$ be the maximal size of an irredundant generating set in $G$. Then, clearly, $m(G) \leq i(G)$.

For $|G| = \prod\limits_{i =1}^kp_i^{\alpha_i}$, let $\lambda(G)$ denote the sum $\sum\limits_{i=1}^k \alpha_i$. Then it is easy to see that $i(G) \leq \lambda(G)$.

A finite group $G$ is flat if $m(H) \leq m(G)$ for all subgroup $H$ of $G$, and is strongly flat if $m(H) < m(G)$ for all proper subgroup $H$ of $G$. In both cases, $i(G) = m(G)$.
\end{defi}

\begin{nt}\label{nt1}
We denote the cycle in the symmetric group $S_n$ that maps $x_i$ to $x_{i + 1}$ for $i \in \overline{1, k}$, with $x_{k + 1} = x_1$, by $(x_1, x_2, \dotsc, x_k)$.
\end{nt}

\section{Properties of irredundant generating sets in the alternating group}

\begin{lem}(Whiston)\label{lm1}
Suppose $(g_1, \dotsc , g_m)$ is an irredundant generating set of some group $G$, and $N \trianglelefteq G$ is a normal subgroup of $G$. Then, possibly after reordering the $g_i$, there exists some $k \leq m$ and some elements $h_{k+1}, \dotsc , h_m \in N$ such that the projections $\overline{g_1}, \dotsc, \overline{g_k}$ form an irredundant generating set of $G/N$ and $g_1, \dotsc , g_k, h_{k+1}, \dotsc, h_m$ form a new irredundant generating set for $G$.
\end{lem}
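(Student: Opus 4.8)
The plan is to carry out the whole argument via a single, carefully made choice in the quotient $G/N$. Write $\pi\colon G\to G/N$ for the canonical map and $\overline{x}=\pi(x)$. Since $\overline{g_1},\dotsc,\overline{g_m}$ generate $G/N$, among all subsets of $\{1,\dotsc,m\}$ whose images generate $G/N$ we may pick one that is \emph{minimal} under inclusion; after reordering the $g_i$, assume it is $\{1,\dotsc,k\}$. The first (easy) observation is that a minimal such set is automatically irredundant: if some $\overline{g_i}$ with $i\le k$ lay in $\genby{\overline{g_\ell}\colon\ell\le k,\ \ell\neq i}$, that subset would already generate $G/N$, contradicting minimality. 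So $\overline{g_1},\dotsc,\overline{g_k}$ is an irredundant generating set of $G/N$, which settles the first half of the claim; the boundary cases $k=0$ (forced exactly when $N=G$) and $k=m$ (when the original images were already irredundant) are covered.

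Next I would build the elements $h_{k+1},\dotsc,h_m$. Set $H=\genby{g_1,\dotsc,g_k}$. Because $\pi(H)=\genby{\overline{g_1},\dotsc,\overline{g_k}}=G/N$ we have $HN=G$, so for each $j$ with $k<j\le m$ we may write $g_j=u_jh_j$ with $u_j\in H$ and $h_j\in N$; these are the required $h_j$. Since $u_j\in H\subseteq\genby{g_1,\dotsc,g_k,h_{k+1},\dotsc,h_m}$ and $h_j$ is one of the listed elements, each original $g_j$ with $j>k$ (indeed all of $g_1,\dotsc,g_m$) lies in $\genby{g_1,\dotsc,g_k,h_{k+1},\dotsc,h_m}$, so the new list generates $G$.

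The remaining work, and the only place real care is needed, is to check irredundancy of $g_1,\dotsc,g_k,h_{k+1},\dotsc,h_m$. Deleting a retained generator $g_i$ ($i\le k$): the rest generates a subgroup whose $\pi$-image is $\genby{\overline{g_\ell}\colon\ell\le k,\ \ell\neq i}$, which by the irredundancy in $G/N$ does not contain $\overline{g_i}$; hence $g_i$ is not in that subgroup and the rest does not generate $G$. Deleting a new element $h_j$ ($j>k$): here I would verify that the rest generates precisely $\genby{g_\ell\colon\ell\neq j}$ — each $g_i$ with $i\le k$ and each $h_\ell=u_\ell^{-1}g_\ell$ with $\ell\neq j$ lies in $\genby{g_\ell\colon\ell\neq j}$ (since $u_\ell\in H\le\genby{g_\ell\colon\ell\neq j}$), and conversely every $g_\ell$ with $\ell\neq j$ is either one of $g_1,\dotsc,g_k$ or equals $u_\ell h_\ell$ — and $\genby{g_\ell\colon\ell\neq j}\neq G$ by the irredundancy of the original list. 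This finishes the proof.

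The substantive point is precisely the minimality in the first step, and I expect that to be the only real obstacle to spot. A thoughtless choice of which generators to keep can break the argument: adjoining elements of $N$ to a set can make a retained $g_i$ redundant. But if $\{1,\dotsc,k\}$ is minimal with $\genby{\overline{g_i}\colon i\le k}=G/N$, then $\overline{g_i}\notin\genby{\overline{g_\ell}\colon\ell\le k,\ \ell\neq i}$, and as the $h_j$ are trivial modulo $N$ they can never pull $g_i$ back into the subgroup generated by the others. Everything past that is routine manipulation of the identities $g_j=u_jh_j$.
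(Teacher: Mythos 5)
Your proof is correct. The paper itself gives no proof of this lemma --- it is quoted from Whiston --- and your argument (choose a minimal, hence irredundant, subset of images generating $G/N$; write each remaining $g_j$ as $u_jh_j$ with $u_j\in\langle g_1,\dotsc,g_k\rangle$ and $h_j\in N$; check irredundancy by projecting to $G/N$ for the retained $g_i$ and by the containment $\langle g_1,\dotsc,g_k,h_{\ell}:\ell\neq j\rangle\subseteq\langle g_\ell:\ell\neq j\rangle\neq G$ for the new $h_j$) is exactly the standard proof of Whiston's lemma, with all the delicate points (minimality of the chosen subset, both directions of the irredundancy check) handled correctly.
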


\begin{defi}\label{def2}
Assume that $H$ is an imprimitive subgroup of $S_n$ or $H \leq S_{\Gamma} \wr S_{\Delta} \leq S_n$ with $n = |\Gamma||\Delta|$. Suppose also that $\Gamma_1,\dotsc, \Gamma_{|\Delta|}$ are copies of  $|\Gamma|$ or blocks such that we can consider $H$ as a group that acts on $n$ points of $\Gamma_1 \cup\dotsc \cup \Gamma_{|\Delta|}$ by permuting points of each individual block $\Gamma_i$, and by permuting blocks $\Gamma_i$ themselves. 

Let $\{g_1, g_2,\dotsc, g_l\}$ be an irredundant generating set for $H$. By \cref{lm1} for $G =S_{\Gamma} \wr S_{\Delta}$, and its normal subgroup $S_{\Gamma}^{|\Delta|} = \prod\limits_{i = 1}^{|\Delta|}S_{\Gamma_i}$, we can assume that for some $k$, $g_1,\dotsc, g_k$ generates block action for $H$, and $g_{k+1},\dotsc, g_l$ fix the blocks. 
\end{defi}

\begin{lem}\label{lm2}(Whiston)
Suppose $H$ is the subgroup of $S_n$ that is defined as in \cref{def2}. If the subgroup generated by $\{g_{k+1},\dotsc, g_l\}$ acts as $S_{\Gamma_1}$ or $A_{\Gamma_1}$ on $\Gamma_1$, then $m(H) = l \leq |\Gamma| + 2|\Delta| - 3$. Note that the lemma is true if $\Gamma_1$ is replaced by any $\Gamma_i$.
\end{lem}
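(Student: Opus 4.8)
The plan is to bound $l$ by splitting the given generating set into its block‑permuting part $g_1,\dots,g_k$ and its block‑fixing part $g_{k+1},\dots,g_l$, bounding the first by $|\Delta|-1$ and the second by $|\Gamma|+|\Delta|-2$; adding these gives $l\le|\Gamma|+2|\Delta|-3$, which is the asserted inequality (and equals $m(H)$ once $\{g_1,\dots,g_l\}$ is taken to be a maximal irredundant generating set). The first bound is immediate from Whiston's theorem that the largest irredundant subset of $S_m$ has size $m-1$: by \cref{def2} the projections $\overline{g_1},\dots,\overline{g_k}$ form an irredundant generating set of the block group $\overline H\le S_\Delta\cong S_{|\Delta|}$, and since a set that is irredundant inside a subgroup is irredundant inside any overgroup, $k\le i(S_{|\Delta|})=|\Delta|-1$. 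Throughout I will assume $\overline H$ is transitive on $\Delta$ (the imprimitive case); the general case of \cref{def2} is handled by restricting attention to the $\overline H$‑orbit of $\Gamma_1$ and dealing with the remaining blocks separately.

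For the block‑fixing part, write $N=\prod_{i=1}^{|\Delta|}S_{\Gamma_i}$, let $p_1\colon N\to S_{\Gamma_1}$ be the projection, let $K=\ker p_1$, and set $M=\genby{g_{k+1},\dots,g_l}\le N$. The idea is to apply \cref{lm1} a \emph{second} time, now to $M$ with its normal subgroup $M\cap K$: after reordering, the elements $p_1(g_{k+1}),\dots,p_1(g_{k+r})$ form an irredundant generating set of $M/(M\cap K)\cong p_1(M)$, which by hypothesis equals $S_{\Gamma_1}$ or $A_{\Gamma_1}$, so $r\le i(S_{|\Gamma|})=|\Gamma|-1$; and there are elements $h_{k+r+1},\dots,h_l\in M\cap K$, each acting trivially on $\Gamma_1$, which complete $g_{k+1},\dots,g_{k+r}$ to an irredundant generating set of $M$. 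It then remains to prove that the number $l-k-r$ of these residual generators is at most $|\Delta|-1$.

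This last bound is the heart of the proof, and the step I expect to be the main obstacle. The approach is to study the block‑fixing subgroup $H_0=H\cap N\trianglelefteq H$. Because $p_1(H_0)\supseteq p_1(M)\supseteq A_{\Gamma_1}$, because $\overline H$ is transitive on $\Delta$, and because $H_0\trianglelefteq H$, the projection of $H_0$ onto every block contains $A_\Gamma$; hence $H_0\cap A_\Gamma^{|\Delta|}$ is a subdirect product of a sub‑product of $A_\Gamma^{|\Delta|}$. When $|\Gamma|\ge5$, so that $A_\Gamma$ is simple, the standard structure theorem for such subdirect products shows that $H_0\cap A_\Gamma^{|\Delta|}$ is a product of diagonal subgroups indexed by a partition $\pi$ of $\Delta$, and that $H_0/(H_0\cap A_\Gamma^{|\Delta|})$ embeds compatibly into $C_2^{|\Delta|}$. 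One then argues that adjoining a non‑redundant generator $h_j$ that is trivial on $\Gamma_1$ must either strictly coarsen the partition recorded so far or enlarge the associated elementary‑abelian $2$‑part in a manner still controlled by that partition; since the partition lattice on $|\Delta|$ points has height $|\Delta|-1$, at most $|\Delta|-1$ such generators can occur. The delicate points, where I expect the real work to lie, are: (i) verifying that the coarsening/enlargement is genuinely \emph{forced} by irredundancy rather than merely consistent with it, so that the count is exact; (ii) the small cases $|\Gamma|\in\{1,2,3,4\}$, where $A_\Gamma$ is not simple (or is trivial), which must be checked separately; and (iii) making the transitivity reduction precise when $\overline H$ acts on $\Delta$ with several orbits.
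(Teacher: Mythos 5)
The paper does not actually prove this lemma --- it is stated with the attribution ``(Whiston)'' and used as a black box --- so your proposal has to be measured against Whiston's argument, whose shape you have partly guessed. Your opening reductions are fine and standard: $k\le|\Delta|-1$ from $i(S_{|\Delta|})=|\Delta|-1$, then a minimal subset of the block-fixing generators inducing the full $\Gamma_1$-action (at most $|\Gamma|-1$ elements), with the remaining generators multiplied into the kernel of the $\Gamma_1$-projection while preserving irredundancy. The genuine gap is exactly the step you flag as the ``heart'': the bound on the residual generators. As you describe it, the count does not close. You allow each residual generator either to strictly coarsen the diagonal partition or to enlarge the elementary-abelian $2$-part, and then invoke only the height $|\Delta|-1$ of the partition lattice; but with two kinds of admissible growth the naive chain length is $(|\Delta|-1)+|\Delta|$, which gives $l\le|\Gamma|+3|\Delta|-3$, not the claimed bound. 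To recover $|\Gamma|+2|\Delta|-3$ one needs the extra structural fact that, because the normalizer in $S_\Gamma^{|\Delta|}$ of a product of full diagonal copies of $A_\Gamma$ is the corresponding product of diagonal copies of $S_\Gamma$, the image of the current subgroup in $C_2^{|\Delta|}$ is constant on the parts of the current partition $\pi$, hence has dimension at most $|\pi|$; one then runs a combined potential such as $(|\Delta|-|\pi|)+\dim V$, which strictly increases at every irredundant step and is bounded by $|\Delta|$, and the initial-value bookkeeping (an odd coordinate is present from the start when the $\Gamma_1$-action is $S_{\Gamma_1}$, versus the saving $m(A_\Gamma)=|\Gamma|-2$ when it is $A_{\Gamma_1}$) is what produces the $-3$. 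In particular your intermediate target ``residual $\le|\Delta|-1$ together with $r\le|\Gamma|-1$'' is not obviously correct in the $A_{\Gamma_1}$ case; the two cases have to be balanced against each other.

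A second, related defect: you attach the diagonal/partition structure to the fixed group $H_0=H\cap N$, but the chain argument must be applied to the increasing subgroups generated by the block-permuting generators, the chosen set inducing the $\Gamma_1$-action, and the residual generators added one at a time. It is only because each of these partial subgroups contains the block-transitive top part together with a subgroup inducing at least $A_{\Gamma_1}$ on $\Gamma_1$ that every block-projection contains $A_\Gamma$ and the subdirect-product (diagonal) structure theorem applies at every stage. If one tries to argue inside $M=\genby{g_{k+1},\dotsc,g_l}$ alone, the statement is simply false: in $S_\Gamma\times S_\Gamma$ (so $|\Delta|=2$) there is an irredundant set of $2|\Gamma|-2$ block-fixing elements inducing $S_{\Gamma_1}$ on $\Gamma_1$, exceeding $|\Gamma|+1$. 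So transitivity must enter the induction itself, not only the preliminary observation about $H_0$; and the small cases $|\Gamma|\le4$, where $A_\Gamma$ is not a non-abelian simple group, still need a separate treatment, as you note. In short, your architecture points in the right direction (it is essentially the diagonal-subdirect analysis behind Whiston's proof), but the decisive counting argument is absent, and in the form stated it would not yield the bound.
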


\begin{thm}\label{thm1}(O'Nan-Scott's theorem. \hyperlink{Wil}{See references})
If $H$ is any proper primitive subgroup of $S_n$ other than $A_n$, then $H$ is a subgroup of one of the following subgroups:
\begin{enumerate}
\item A primitive wreath product, $S_k \wr S_m$, where $n = k^m$;
\item An affine group $AGL_s(p)$, where $n = p^s$
\item A group of shape  $T^k \cdot (Out(T) \times S_k )$, where $T$ is a non-abelian simple group, acting on the cosets of 'diagonal' subgroup of $Aut(T) \times S_k$, where $n = |T|^{k-1}$
\item $H \leq Aut(T)$ with some simple group $T \leq S_n$
\end{enumerate}
\end{thm}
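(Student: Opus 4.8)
The stated result is the classical O'Nan--Scott theorem, and a genuinely self-contained proof is well beyond the scope of this paper; the plan is therefore to outline the standard structural argument, which in the body of the paper we simply cite, rather than to reconstruct it in full.

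The argument runs through an analysis of the socle $N = \mathrm{soc}(H)$. Because $H$ is primitive, every nontrivial normal subgroup of $H$ is transitive on the $n$ points, and one shows that $H$ has one or two minimal normal subgroups, each characteristically simple and hence isomorphic to $T^k$ for a simple group $T$ and some $k \geq 1$; in the two-subgroup case the two are isomorphic and regular, which is a sub-case of the diagonal analysis below. The first dichotomy is whether $T$ is abelian. If it is, then $T \cong \mathbb{Z}_p$ and $N$ is elementary abelian; being an abelian transitive normal subgroup of a primitive group, $N$ acts regularly, so $n = p^s$ with $|N| = p^s$. Identifying the point set with $N$, the stabiliser $H_0$ of a point acts faithfully and irreducibly on $N \cong \mathbb{F}_p^{\,s}$, so $H = N \rtimes H_0 \leq AGL_s(p)$, which is case (2).

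If $T$ is non-abelian, write $N = T_1 \times \cdots \times T_k$ and study simultaneously the point stabiliser $M = N_\alpha$ and the conjugation action $H \to S_k$ on the factors $T_i$. The heart of the proof is a case analysis of how $M$ can lie inside $N$: primitivity forces $M$, up to the action on the factors, to be trivial, a full diagonal subgroup over each orbit of factors, or a direct product of stabilisers coming from a ``sub-block'' grouping of the factors. These possibilities give, respectively: the holomorph and twisted-wreath configurations, which one checks embed into a diagonal-type group or into a product-action group; the diagonal action of $H$ on the cosets of a diagonal subgroup of $\mathrm{Aut}(T) \times S_k$ with $n = |T|^{k-1}$, which is case (3); the product action inside $S_k \wr S_m$ with $n = k^m$, which is case (1); and, when $k = 1$ and $M \neq 1$, the almost simple case $H \leq \mathrm{Aut}(T)$, which is case (4).

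The principal obstacle is exactly this non-abelian socle analysis: pinning down the possibilities for $M = N_\alpha$ inside $T^k$ and their compatibility with the permutation of the factors, and in particular verifying that the holomorph and twisted-wreath groups are not maximal but are absorbed into cases (1) and (3). This step is delicate and draws on detailed facts about non-abelian simple groups, and in a paper of the present scope we do not reproduce it, relying instead on the references cited in the statement.
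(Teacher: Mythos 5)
Your proposal matches the paper's treatment: the paper does not prove this statement at all, but simply cites it as the classical O'Nan--Scott theorem (referring to Wilson's notes), exactly as you do, and your accompanying sketch of the socle analysis is a correct outline of the standard argument. Nothing further is needed here, since both you and the paper treat the theorem as an imported classical result rather than something to be proved in this context.
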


\begin{lem}\label{lm3}
$H \neq A_n$ is a primitve subgroup of $S_n$, $n \geq 13$. Then $m(H) \leq n - 4$. More generally, there exists $n_0 \in \mathbb{Z}$ such that for any $n \geq n_0$, and any primitive subgroup $H$ of $S_n$, we have $m(H) \leq \frac{n}{2}$.
\end{lem}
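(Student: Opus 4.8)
The plan is to feed the group $H$ into the O'Nan--Scott theorem (\cref{thm1}) and bound $m(H)$ in each of the four resulting families. The only general tool needed beyond \cref{thm1} is the chain $m(H)\le i(H)\le \lambda(H)$ recalled in the introduction, together with the observation that $\lambda$ is monotone under inclusion: if $H\le K$ then $|H|$ divides $|K|$, so $v_p(|H|)\le v_p(|K|)$ for every prime $p$ and hence $\lambda(H)\le \lambda(K)$. Thus, writing $K$ for the O'Nan--Scott overgroup containing $H$, it suffices to estimate $\lambda(K)$ in terms of $n$, using $\lambda(N)\le \log_2|N|$ for an arbitrary finite group $N$ and the sharper value $\lambda(S_k)=\lambda(k!)=\sum_{p\le k}v_p(k!)$ when that helps. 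Note that $H=S_n$ and $H=A_n$ must be excluded from the statement (they have $m$ equal to $n-1$ and $n-2$), which is exactly what the hypotheses ``proper'' and ``$\neq A_n$'' do, so \cref{thm1} applies directly.

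Cases (1), (2), (3) of \cref{thm1} are routine. For $K=S_k\wr S_m$ with $n=k^m$ one gets $\lambda(K)=m\,\lambda(k!)+\lambda(m!)$, which is $O\!\left(mk\log k+m\log m\right)$ and hence negligible against $n=k^m$ once $k,m\ge 2$. For $K=AGL_s(p)$ with $n=p^s$ one gets
\[
\lambda(K)=s+\binom{s}{2}+\sum_{i=1}^{s}\lambda(p^i-1)\le s+\binom{s}{2}+(\log_2 p)\binom{s+1}{2},
\]
which is $O(s^2\log p)$ and negligible against $n=p^s$. For $K=T^k\cdot(\mathrm{Out}(T)\times S_k)$ with $n=|T|^{k-1}$ one gets $\lambda(K)\le k\log_2|T|+\log_2|\mathrm{Out}(T)|+\lambda(k!)$, negligible against $|T|^{k-1}$ since $|T|\ge 60$ and $k\ge 2$. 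In each of these three cases the inequality $\lambda(K)\le n-4$ then holds for all admissible parameters except a short, explicit list of small configurations, which I would check by hand; for the first assertion only the bound $n\ge 13$ is needed, and $\lambda(K)\le n/2$ holds for every $n\ge n_0$ once $n_0$ is chosen large enough.

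The essential case is (4), the almost simple case $H\le \mathrm{Aut}(T)$ with $T$ a nonabelian simple group acting faithfully and primitively of degree $n$. Here $\lambda(H)\le \log_2|T|+\log_2|\mathrm{Out}(T)|$, and since $|\mathrm{Out}(T)|$ is uniformly small (bounded in terms of the Lie rank, and at most $4$ for alternating and sporadic $T$), everything reduces to showing $\log_2|T|\le n-4$ whenever $T\neq A_n$, i.e.\ that $T$ cannot be too large compared with the degree of one of its primitive actions. I would split this as follows. (a) The ``large subgroup'' actions: $T=A_m$ on $r$-subsets or on set-partitions, and classical $T$ on subspaces or pairs of complementary subspaces. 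Here $H\neq A_n,S_n$ forces $r\ge 2$ (respectively a nontrivial partition type, respectively a proper nonzero subspace), so the degree $n$ is at least an explicit polynomial in the natural parameters ($n\ge\binom{m}{2}$, respectively $n\ge$ the number of points of the projective space), and this polynomial dominates $\log_2|T|$ for all but finitely many parameter values. (b) All remaining primitive actions: I would invoke a standard bound on the order of a primitive permutation group that is neither $A_n$ nor $S_n$ --- for instance Bochert's or the Praeger--Saxl bound, or Mar\'oti's $|H|<n^{1+\lfloor\log_2 n\rfloor}$ --- which gives $\lambda(H)\le \log_2|H|=O\!\left((\log n)^2\right)=o(n)$ and in particular $\lambda(H)\le n-4$ for $n$ beyond an explicit threshold.

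Putting the pieces together, the $n/2$ bound for large $n$ drops out uniformly from the fact that $|H|$ is polynomially bounded in $n$ in every O'Nan--Scott class, so the main obstacle is the first, sharper assertion: making $\lambda(H)\le n-4$ hold down to $n=13$. That requires pushing the crude estimates $\lambda(N)\le\log_2|N|$ to the exact values of $\lambda$ for the finitely many primitive groups of small degree, and carefully bookkeeping the parameter ranges in family (a); this finite verification (conveniently done against tables of primitive groups of degree up to a modest bound) is where the real work of the proof lies, the remaining steps being the mechanical order computations described above.
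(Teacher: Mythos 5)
Your skeleton (reduce to \cref{thm1}, then bound $m(H)\le i(H)\le\lambda(H)\le\log_2|H|$ class by class, with exact values of $\lambda$ for a finite list of small configurations) is the same as the paper's, and your treatment of the affine and diagonal classes is essentially identical to the paper's Cases 1--2. The genuine divergence is in the almost simple class. The paper follows Whiston: it runs through the classification family by family ($PSL$, $PSp$, $P\Omega^{\pm}$, unitary), comparing $\lambda(Aut(T))$ against explicit lower bounds for the \emph{minimal permutation degree} $d(T)\le n$ of each family, with small parameter values disposed of by order tables, and it quotes Whiston outright for the exceptional Lie type and sporadic socles and for the primitive wreath class. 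You instead use a uniform order bound for primitive groups not containing $A_n$, reserving a separate argument for the ``large'' actions ($A_m$ on $r$-subsets or partitions, classical groups on subspaces). Your route is more uniform and makes the second assertion ($m(H)\le n/2$ for $n\ge n_0$) fall out immediately; the paper's route keeps every estimate elementary (orders of $Aut(T)$ versus $d(T)$) and reaches the sharp threshold $n\ge 13$ without consulting tables of primitive groups of small degree beyond what Whiston and the GAP computations already cover.

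Two caveats on your version. First, of the three bounds you name, only the Mar\'oti-type bound $|H|<n^{1+\lfloor\log_2 n\rfloor}$ does the job: Bochert gives $[S_n:H]\ge\lfloor(n+1)/2\rfloor!$, i.e.\ $\log_2|H|$ of order $\tfrac{n}{2}\log_2 n$, and Praeger--Saxl gives $|H|<4^n$, i.e.\ $\log_2|H|<2n$; neither yields $n-4$ nor $n/2$. Moreover Mar\'oti's theorem has exceptional cases (socle $A_m$ on $k$-subsets, product-action groups, Mathieu groups), so your split into (a) and (b) is not optional but forced; the product-action exceptions do not occur in the almost simple class and the Mathieu degrees are small, so this is fine, but it should be said. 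Second, since $(1+\lfloor\log_2 n\rfloor)\log_2 n>n-4$ for all $n$ up to roughly $33$, your ``finite verification'' for the first assertion is not cosmetic: it must cover every primitive group of degree between $13$ and the mid-thirties (other than $A_n,S_n$) with exact $\lambda$-values. That is feasible and comparable in size to the paper's tables, but it is where your proof actually lives, and as written it is only promised, not carried out --- exactly as the paper's own small cases are carried out in its Cases 3--6.
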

\begin{proof}
We will follow Whiston's proof to prove for $m(H) \leq n - 4$. To prove the more general inequality, we just need to follow exactly every case Whiston considered in his proof.

Based on \hyperref[thm1]{O'Nan-Scott}'s theorem and the classification of finite simple groups, we will primitive subgroup of $S_n$ other than $A_n$ cases by cases to prove that $m(H) \leq n -4$:
\vspace{5mm}

$\mathbf{Case\ 1}$: $H$ is a subgroup of the affine group $AGL_s(p)$. Then $|H| \leq |AGL_s(p)| \leq p^{s^2+s}$, so $m(H) \leq \lambda(H) \leq \log_2|H| \leq (s^2+s)\log_2(p)$, while $n = p^s$.

For $s\geq3$ and $p\geq 5$, we have $s-1 \leq \log_2(s+1)$. Hence, $(s-1)\log_2(p) + \log_2(\frac{2}{3}) \geq 2\log_2(\frac{5}{4}) + \log_2(\frac{2}{3}) + (s-1).2 \geq 0 + 2.\log_2(s+1) > \log_2(s^2+s)$. As a result, $(s^2+s)\log_2p \leq (s^2+s)p \leq \frac{2}{3}p^s \leq p^s - 4$.

For $s \geq 3$ and $p = 3$, $n = 3^s \geq (s^2+s)\log_23 + 4 \geq m(H) + 4$

For $s \geq 6$ and $p = 2$, $n = 2^s \geq s^2 + s + 4 \geq m(H) + 4$

For $s = 2$ and $p \geq 5$, $p^2 \geq 6\log_2p + 4 \geq m(H) + 4$.

For $p = 3, s = 2$, $m(H) \leq 9 \leq n -4$.

For $p = 2, s = 3$, $|AGL_3(2)| = 2^6.3.7$ so $m(H) \leq 8 < n - 4$.

For $p = 2, s = 5$, $|AGL_5(2)| = 2^15.3^2.5.7.31$, so $m(H) \leq 20 < 2^5 -4 = n - 4$.

For $p = 2, s = 4$, $AGL_4(2) \cong C_{2^4}.A_8$, and by Whiston's lemma and the second isomorphism theorem for normal subgroup $N = C_{2^4}$ of $AGL_4(2)$, $m(H) \leq i(H \cap N) + m(H/H \cap N)  \leq i(N) + m(HN/N) \leq i(N) + i(AGL_4(2)/N) = i(N) + i(A_8) = 1 + 6 = 7 < n -4$ 
\vspace{5mm}

$\mathbf{Case\ 2}$: $H \leq T^k \cdot (Out(T) \times S_k )$ with $n = |T|^{k - 1}$. $T$ is non-abelian simple group so $|T| \geq 60$. Because $|T^k \cdot (Out(T) \times S_k )| \leq |T|^k \cdot |S_k||Out(T)| \leq |T|^k \cdot k^k|T|^{\log_2|T|}$, $m(H) \leq \lambda(H) \leq k\log_2|T| + k\log_2k + (\log_2|T|)^2$.

For $k \leq 3$, we have: $|T|^{k-1} = |T|^{k-2}|T| \geq k \cdot 4\log_2|T|, |T|^{k-1} \geq 4k^2 \geq 4k\log_2k$, and $|T|^{k-1} \geq |T|^2 \geq (2\log_2|T|)^2$

Hence $n - 4 = |T|^{k-1} - 4 \geq \frac{3}{4}|T|^{k-1} \geq  k\log_2|T| + k\log_2k + (\log_2|T|)^2 \geq m(H)$.
\vspace{5mm}

Let $d(G)$ be the minimal degree of a permutation representation of $G$. Now we need to handle the case where $H \leq Aut(T)$ where $T$ is simple and $d(T) \leq n$. By the classification of finite simple groups we have the following cases: 
\vspace{5mm}

$\mathbf{Case\ 3}$: $H \leq Aut(T)$, with $T$ = $PSL(m, q)$. We have $n \geq d(T) = \dfrac{q^m-1}{q-1}$, and $m(H) \leq \lambda(H) \leq \lambda(Aut(T)) \leq m^2\log_2(q) + 1$.

\vspace{5mm}
$\mathbf{Case\ 3a}$: For $m \geq 4$, $d(T) - 4 \geq q^{m-1} + 1$. We have $(m-2)\log_2q \leq 2\log_2m$ except for $(m, q) = (7, 2), (6, 2), (5, 2), (4, 3), (4, 2)$, and therefore, $(m-1)\log_2q \geq 2\log_2m + \log_2q$, and so $q^{m-1} \geq m^2q \geq m^2\log_2(q)$. As a result, $n - 4 \geq d(T) - 4 \geq q^{m-1} + 1 \geq m^2log_2(q) + 1 \geq m(H)$.

For $(m, q)  = (7, 2), (6, 2), (5, 2)$, $n - 4= \dfrac{q^m-1}{q-1} - 4 \geq m^2\log_2q + 1 \geq m(H)$. 

For $(m, q) = (4, 2)$, $|Aut(PSL(4, 2))| = 2^7.3^2.5.7$ so $m(H) \leq 11 = \dfrac{2^4-1}{2-1} - 4 \leq n -4$.

For $(m, q) = (4, 3)$, $|Aut(PSL(4, 3))| = 2^9.3^6.5.13$ so $m(H) \leq 17 < \dfrac{3^4-1}{3-1} - 4 \leq n -4$.

\vspace{3mm}
$\mathbf{Case\ 3b}$: $m = 3$. We have $\dfrac{q^3-1}{q-1} \geq 9\log_2q + 5\ \forall q \geq 5$, and so $m(H) \leq n - 4$. 

For $q = 2$, $|Aut(PSL(3, 2))| = 2^4.3.7$, so $m(H) \leq 6 < n -4$.

For $q = 3$, $|Aut(PSL(3, 3))| = 2^5.3^3.13$, so $m(H) \leq 9 \leq n - 4$.

\vspace{3mm}
$\mathbf{Case\ 3c}$: $m = 2$. We have $q \geq 4\log_2q + 4\ \forall q \geq 22$, and so $m(H) \leq n - 4$.

For $q \leq 5$, $m(H) \leq \floor{4\log_25} = 9 \leq n - 4$.

Now for other $q$, we show that $m(H) \leq 9 \leq n -4$ by considering the following tables: 
\vspace{3mm}
\begin{center}
	\begin{tabular}{| c | c | c | c | c |} 
		\hline
		$q$ & $7$ & $8$ & $9$ & $11$ \\
		\hline
		$|Aut(PSL(2, q))|$  & $2^4.3.7$ & $2^3.3^3.7$ & $2^5.3^2.5$ & $2^3.3.5.11$ \\
		\hline
		$m(H) \leq \lambda(H) \leq$ & 6 & 7 & 8 & 6 \\
		\hline
	\end{tabular}
\end{center}
\vspace{2mm}
\begin{center}
	\begin{tabular}{ | c | c | c | c | c |} 
		\hline
		$q$ & $13$ & $16$ & $17$ & $19$ \\
		\hline
		$|Aut(PSL(2, q))|$ & $2^3.3.7.13$ & $2^6.3.5.17$ & $2 ^5.3^2.17$ & $2^4.3^4.5$\\
		\hline
		$m(H) \leq \lambda(H) \leq$ & 5 & 9 & 8 & 9 \\
		\hline
	\end{tabular}
\end{center}

\vspace{5mm}
$\mathbf{Case\ 4}$: $H \leq Aut(T)$, with $T$ = $PSp(2m, q), m \geq 2$. $n \leq d(T) = \dfrac{q^{2m}-1}{q-1}$, and $m(H) \leq \lambda(H) \leq \lambda(Aut(T)) \leq (2m^2 + m + 1)\log_2(q)$.

We have $\dfrac{q^{2m} - 1}{q- 1} > q^{2m-1} + 4$. Now for $(m, q) \neq (3, 2), (2, 2), (2, 3)$, $(2m - 1)\log_2(q) \geq \log_2(2m^2 + m + 1)$, and so $(2m - 1)\log_2(q) \geq \log_2(2m^2 + m + 1) + \log_2(\log_2(q))$.
Therefore, $q^{2m - 1} \geq (2m^2 + m + 1)\log_2(q)$, and so $n - 4 \geq m(H)$. 

For $(m, q) = (3, 2), (2, 2)$ or $(2, 3)$, we have the following table:
\begin{center}
	\begin{tabular}{ |c | c | c | c | } 
		\hline
		T & $PsP(4,2)$ & $PsP(4, 3)$ & $PsP(6, 2)$ \\
		\hline
		$|Aut(T)|$ & $2^5.3^2.5$ & $2^7.3^4.5$ & $2^9.3^4.5.7$\\
		\hline
		$m(H) \leq \lambda(H) \leq$ & $8 < \frac{2^4 - 1}{2 -1} - 4$ & $12 < \frac{3^4 - 1}{3-1} - 4$ & $15 < \frac{2^6 - 1}{2-1} - 4$ \\
		\hline
	\end{tabular}
\end{center}

\vspace{5mm}
$\mathbf{Case\ 5}$: Now we consider the case when $T = P\Omega(n, q)$.

\vspace{3mm}
$\mathbf{Case\ 5a}$: $H \leq Aut(T)$, with $T$ = $P\Omega(2l+1, q), l \geq 3$, $q$ odd. 

For $q \geq 5$, $n \geq d(T) = \dfrac{q^{2l}-1}{q-1}$, and $m(H) \leq \lambda(H) \leq \lambda(Aut(T)) \leq (2l^2 + l)\log_2(q)$. Therefore, $d(T) > q^{2l-1} + 4$. $q^{2l-2} \geq 5^{2l-2} \geq 2l^2 + 2$, and so $n \geq d(T) \geq m(H) - 4$.

For $q = 3$, $n \geq d(T) = \frac{1}{2}3^l(3^l - 1) \geq 2(l^2 + l)\log_23 + 4 \geq m(H) -4$.

\vspace{3mm}
$\mathbf{Case\ 5b}$ : $H \leq Aut(T)$, with $T$ = $P\Omega^{+}(2l, q), l \geq 4$

For $q \neq 2$, $n \geq d(T) = \dfrac{(q^l-1)(q^{l-1} + 1)}{q-1}$, and $m(H) \leq \lambda(H) \leq \lambda(Aut(T)) \leq (2l^2 + 1)\log_2(q) + \log_26$.

We have $d(T) \geq (q^{l-1} + q^2 + q + 1)(q^{l-1} + 1) \geq (q^{l-1} + 8)(q^{l-1} + 1) > q^{2l-2} + 8$. Also note that $q \geq \log_2q$ and $q^{2l-3} \geq 3^{2l-3} \geq 2l^2 + 1$. Therefore $n \geq d(T) \geq q^{2l-2} + 8 \geq (2l^2 +1)\log_2q + 8 > m(H) + 4$. 

For $q = 2$, $n \geq d(T) = 2^{l-1}(2^l - 1) \geq 2l^2 + 7 > m(H) + 4$

\vspace{3mm}
$\mathbf{Case\ 5c}$: $H \leq Aut(T)$, with $T$ = $P\Omega^{-}(2l, q)$ with $l \geq 4, q \geq 3$ or $l \geq 5, q = 2$.

We have $n \geq d(T) \geq q^{2l-3}$, and $m(H) \leq \lambda(H) \leq \lambda(Aut(T)) \leq (2l^2 + 2)\log_2(q) + 1$.

Note that for $q \geq 3, l \geq 4$, $q^{2l-4} \geq 3^{2l-4} \geq 2l^2 + 4$, $log_2(q) \leq q$, and $5 \leq 2q$. Therefore, $m(H) + 4 \leq (2l^2+2)q + 2q \leq q^{2l-3} \leq n$.

For $q = 2, l \geq 4$, $n \geq 2^{2l-3} \geq 2l^2 + 7 \geq m(H) + 4$.

\vspace{5mm}
$\mathbf{Case\ 6}$: $H \leq Aut(T)$, with $T = U(n, q)$, and $m(H) \leq \lambda(H) \leq \lambda(Aut(T)) \leq  n^2\log_2(q+1) + 1$.

\vspace{3mm}
$\mathbf{Case\ 6a}$: $n \geq 5$ and $(n, q)$ is neither $(6m, 2)$ or $(6m, 3)$. We have $n \geq d(T) \geq \dfrac{(q^n - 1)(q^{n-1} - 1)}{q^2}$, and so $d(T) \geq q^{2n-3} - q^{n-2} - q^{n - 3} + 1 \geq (q-1)q^{2n-4} + 4 + 1$. 

For $q \geq 3$, $\log_2(q+1) \leq q -1$, and $q^{2n-4} \geq 3^{2n-4} \geq n^2$ for $n \geq 5$. Hence, $n \geq d(T) \geq n^2(q) + 5 \geq n^2\log_2(q+1) + 5 \geq m(H) + 4$. 

For $q = 2$, $2^{2n-4} \geq n^2\log_23 \ \forall n \geq 5$, so again we get $n \geq m(H) + 4$.

\vspace{3mm}
$\mathbf{Case\ 6b}$: $T = U(6m, 3)$. Then $d(T) \geq \frac{1}{4}(3^n - 1)3^{n-1} \geq 2n^2 + 5$ with $n \geq 6$, so $n \geq m(H) + 4$.

\vspace{3mm}
$\mathbf{Case\ 6c}$: $T = U(6m, 2)$. Then $d(T) \geq \frac{1}{3}2^{n-1}(2^n - 1) \geq n^2\log_23 + 5$ with $n \geq 6$, so $n \geq m(H) + 4$.

\vspace{3mm}
$\mathbf{Case\ 6d}$: $T = U(4, q)$. Then $d(T) \geq (q+1)(q^3 + 1) \geq 16\log_2q + 5$ with $n \geq 6$, so $n \geq m(H) + 4$.

\vspace{3mm}
$\mathbf{Case\ 6e}$: $T = U(3, q)$. Then $d(T) - 1 \geq q^3 \geq 9\log_2q + 4$ with $n \geq 6$, so $n \geq m(H) + 4$.

The case where $H$ is a subgroup of primitive wreath product and the case when $H \leq Aut(T)$ with $T$ is either Lie-type group (twisted Chevalley group or the Tits group) or sporadic group are already handled by \hyperlink{Whis}{Whiston}
\end{proof}

\begin{rmk}
Using GAP, Sophie Le analyzes primitive subgroup of $S_n$ for $n = \overline{9, 12}$, and obtained the result that $m(H) \leq n - 4$. Therefore, \cref{lm3} holds for $n \geq 9$.
\end{rmk}

\begin{lem}\label{lm4}
Let $\mathbf{P} = \{X_0, X_1, \dotsc , X_m\}$ be a partition of $\Sigma = \{1, 2,\dotsc, n\}$ with $m \geq 1$. Then for every $h \in S_{\Sigma} = S_n$, there exists $\alpha$, $\beta_0$ and $\{\beta_i\}_{i=1}^p$ such that $h = \alpha.\beta_0.\prod\limits_{i=1}^p \beta_i$, and $\alpha \in S_{X_1} \times S_{X_2} \times\dotsc. \times S_{X_m}$, $\beta_0 \in S_{X_0}$, and each $\beta_i \not\in S_{X_t} \ \forall t \in \overline{0, m}$ is a single cycle such that for each $j \in \overline{1, m}$, elements in $X_j$ that are not fixed by $h$ are all in the some same cycle $\beta_i$. Moreover, $\beta_0$ commutes with $\alpha$ and $\beta_i\ \forall i \in \overline{1, p}$, and $\beta_i$ with $i \in \overline{1, p}$ commutes with each other. We call this representation $h = \alpha.\beta_0.\prod\limits_{i=1}^p \beta_i$ the $M$-decomposition of $h \in S_n$. Also note that if we let $Y_i$ be the set of all $j \in \overline{1, m}$ such that elements in $X_j$ are in the cycle $\beta_i$, then $\mathbf{Q} = \{Y_1, \dotsc, Y_p\}$ is a partition of $\mu$ for some subset $\mu \subset \{1, 2, \dotsc, m\}$. We call this partition $\mathbf{Q}$ the associated partition of the $M$-decomposition of $h \in S_n$.

Now suppose that $\mathbf{P}$ is a partition $\{X, Y\}$ of $\Sigma$ instead. Then each element $h \in S_{\Sigma} = S_n$ can be represented as $\alpha.\beta$, where $\alpha \in S_X \times S_Y$ and $\beta$ is $1$ or is a single cycle of the form $(x_1, y_1, x_2, y_2, \dotsc, x_k, y_k)$ with $x_i \in X$, and $y_i \in Y$. We will call this representation the strong $M$-decomposition of $h \in S_n$. 
\end{lem}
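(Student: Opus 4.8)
The statement packages three claims: existence of the $M$-decomposition, the fact that the associated $\mathbf{Q}$ is a partition of a subset of $\overline{1,m}$, and existence of the strong $M$-decomposition. I would dispose of the strong case first, as it is short. Given $h \in S_{X \cup Y}$, put $S = \{x \in X : h(x) \in Y\}$ and $T = \{y \in Y : h(y) \in X\}$; a short count gives $|S| = |T| =: k$. If $k = 0$ then $h$ preserves both $X$ and $Y$, so $\beta = 1$ and $\alpha = h$ work; otherwise list $S = \{x_1, \dots, x_k\}$ and $T = \{y_1, \dots, y_k\}$ in any order, set $\beta = (x_1, y_1, x_2, y_2, \dots, x_k, y_k)$ and $\alpha = h\beta^{-1}$. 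A case check on the four types of point ($x \in X \setminus S$, $x \in S$, $y \in Y \setminus T$, $y \in T$) shows $\alpha$ maps $X$ to $X$ and $Y$ to $Y$, so $\alpha \in S_X \times S_Y$ and $h = \alpha\beta$ has the required form.

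\textbf{Reducing the general case to one permutation.} For the general $M$-decomposition the plan rests on one observation: since $S_{X_0}$ and $S_{X_1} \times \dots \times S_{X_m}$ act on disjoint point sets, a factorisation $h = \alpha\beta_0\pi$ with $\alpha \in S_{X_1} \times \dots \times S_{X_m}$ and $\beta_0 \in S_{X_0}$ exists (with $\alpha$ and $\beta_0$ then automatically commuting) precisely when $h\pi^{-1}$ preserves every block $X_0, \dots, X_m$, and in that case one is forced to take $\alpha = (h\pi^{-1})|_{X_1 \cup \dots \cup X_m}$ and $\beta_0 = (h\pi^{-1})|_{X_0}$. So the lemma reduces to producing a permutation $\pi$ --- its cycles will be the $\beta_i$ --- such that: (a) $\pi(v)$ lies in the same block as $h(v)$ for every $v$ (equivalently $h\pi^{-1}$ is block-preserving); (b) every nontrivial cycle of $\pi$ meets at least two blocks, hence lies in no $S_{X_t}$; (c) for each $j \in \overline{1,m}$ at most one cycle of $\pi$ has support meeting $X_j$; and (d) $h\pi^{-1}$ fixes every point of $X_0 \cap \mathrm{supp}(\pi)$ --- exactly what will make $\beta_0$ commute with each $\beta_i$.

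\textbf{Constructing $\pi$, and the associated partition.} I would construct $\pi$ in three moves. Begin with $\pi := h$, which satisfies (a) and (d) trivially. Next, as long as some $X_j$ with $j \in \overline{1,m}$ meets two distinct nontrivial cycles $C$, $C'$ of $\pi$, choose $a \in C$ and $b \in C'$ with $h(a), h(b) \in X_j$ --- these exist, since for $u \in C \cap X_j$ the point $a := \pi^{-1}(u)$ has $\pi(a) = u \in X_j$ and therefore $h(a) \in X_j$ by (a) --- and replace $\pi$ by $\pi \cdot (a\,b)$. This merges $C$ and $C'$ into a single cycle, preserves (a) (the two reassigned images still lie in $X_j$), preserves (d) (the only points whose $\pi$-preimage is altered lie in $X_j \ne X_0$), and strictly decreases $\sum_{j \in \overline{1,m}} \#\{\text{nontrivial cycles of } \pi \text{ meeting } X_j\}$, so the loop terminates, at which point (c) holds. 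Finally, split the cycles of the resulting $\pi$ into mixed ones (meeting $\ge 2$ blocks) and pure ones (contained in a single block), let $P$ be the product of the pure cycles, and replace $\pi$ by $\pi P^{-1}$: this retains (a), (c), (d) and now also gives (b), while the discarded pure cycles are absorbed harmlessly, those inside $X_0$ into $\beta_0$ and those inside some $X_j$ ($j \ge 1$) into $\alpha$. With this final $\pi$, define $\alpha$ and $\beta_0$ as above; the $\beta_i$ (the cycles of $\pi$) commute pairwise as disjoint cycles, $\beta_0$ commutes with each $\beta_i$ by (d) and with $\alpha$ by disjointness of supports, and (b), (c) give the required shape of the $\beta_i$. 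The associated partition then comes for free: $Y_i = \{j \in \overline{1,m} : X_j \cap \mathrm{supp}(\beta_i) \ne \emptyset\}$ is nonempty (a mixed cycle meets $X_0$ and some $X_j$, or two of $X_1, \dots, X_m$) and the $Y_i$ are pairwise disjoint by (c), so $\mathbf{Q} = \{Y_1, \dots, Y_p\}$ is a partition of $\mu := \bigcup_i Y_i \subseteq \overline{1,m}$.

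\textbf{Main obstacle.} The single genuinely delicate point is keeping invariant (d) alive through the merging loop --- that is, never merging two cycles "across" the anchor block $X_0$. This is always possible, because a merge is called for only when some $X_j$ with $j \ge 1$ is met twice, and then that very $X_j$ serves as the block to merge along; merging instead across $X_0$ would indeed break (d). Everything else --- the effect of multiplying by $(a\,b)$ on the cycle structure, the fact that discarding the pure cycles preserves (a), (c), (d), and the four-case verification in the strong $M$-decomposition --- is routine bookkeeping. I should add how I read the hypothesis "elements in $X_j$ not fixed by $h$ are all in the same cycle $\beta_i$": I take it to mean property (c), that each $X_j$ ($j \ge 1$) meets at most one of the $\beta_i$. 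The stronger literal reading, that every moved point of $X_j$ occurs in some $\beta_i$, is already false for $h = (y_1\, y_2)(x_1\, z_1)$ with blocks $X_0 = \{x_1\}$, $X_1 = \{y_1, y_2\}$, $X_2 = \{z_1\}$, which is why (c) is the right formalisation.
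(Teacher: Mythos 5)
Your proposal is correct, and for the general $M$-decomposition its core mechanism coincides with the paper's: two cycles of $h$ meeting a common block $X_j$ ($j\ge 1$) are merged into a single cycle by a transposition supported in $X_j$, and all such transpositions are absorbed into the block-preserving factor. The paper packages this by grouping the mixed cycles of $h$ into equivalence classes (cycles related when they share some $X_j$) and merging within each class via the identity $(a,x_1,\dotsc,x_k)(b,y_1,\dotsc,y_l)=(a,b)(a,x_1,\dotsc,x_k,b,y_1,\dotsc,y_l)$ with $a,b\in X_j$, keeping the pure cycles aside from the start; you instead run the merging as a loop on $\pi$ with invariants (a)--(d) and a termination measure and discard pure cycles at the end. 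That is presentation rather than substance (your loop may also swallow a pure cycle lying inside some $X_j$ into the mixed cycle meeting $X_j$, which is harmless and only strengthens the conclusion), and your invariant (d) delivers the disjointness of $\beta_0$ from the $\beta_i$ that the paper gets for free from disjointness of the cycles of $h$. Two points are genuinely different and worth keeping: you prove the strong $M$-decomposition directly, by threading $S=\{x\in X: h(x)\in Y\}$ and $T=\{y\in Y: h(y)\in X\}$ alternately into one cycle $\beta$ and checking $h\beta^{-1}\in S_X\times S_Y$, whereas the paper derives it from the general case and then repeatedly splits off a transposition whenever two adjacent entries of the long cycle lie in the same part; your route is a little cleaner. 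Also, your caveat about the clause ``elements in $X_j$ not fixed by $h$ all lie in the same $\beta_i$'' is justified: read literally it can fail (your example, and even the paper's own construction leaves a cycle of $h$ contained in $X_j$ inside $\alpha$), and what the paper's proof actually establishes is precisely your condition (c), that each $X_j$ with $j\ge 1$ meets at most one $\beta_i$, which is the form of the statement used later, e.g.\ in \cref{lm11}.
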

\begin{proof}
Consider an element $h \in S_n$. Write the cycle decomposition of $h$. First, group all cycles that are in $\prod_{i=1}^m S_{X_i}$ into $\alpha_0$, and all cycles in $S_{X_0}$ into $\beta_0$. 

We say that $2$ cycles in are equivalent if they both have elements that are in some same $X_j$. Then we will have $p$ equivalent classes of cycles. Note that from the definition of equivalent relation, if we let $Y_i$ be the set of all $j$ such that some element in $X_j$ are in some cycle of the $i$th equivalent classes, then $Y_i$ are all disjoint and $\mathbf{Q} = \{Y_1,\dotsc,Y_p\}$, is therefore a parition of $\{1, \dotsc, m\}$. Then using the identity $(a, x_1 \dotsc, x_k)(b, y_1, \dotsc, y_l) = (a, b)(a, x_1, \dotsc, x_k, b, y_1, \dotsc, y_l)$ with $a, b \in X_j$, we can represent product of all cycles in the same $i$th equivalent classes as $\alpha_i \beta_i$, where $\alpha_i \in \prod_{j \in Y_i} X_j$, and $\beta_i$ is a single cycle containing elements in $X_0$ and $X_j$ with $j \in Y_i$. 

Therefore, we get $h = \alpha_0.\beta_0.\prod_{i=1}^p \alpha_i\beta_i$. Now note that $\alpha_j$ commutes with $\beta_k$ for $k \neq j$ so we can push all $\alpha_j$ to the top positions, and so $h = \alpha_0\alpha_1 \cdots \alpha_p\beta_0\beta_1 \cdots \beta_p$, and if we let $\alpha = \prod_{i=0}^{p}\alpha_i$, then we have the $M$-decomposition of $h$.

For the case $\mathbf{P} = \{X, Y\}$, based on $M$- decompostion of $h$, we will get $h = M. N$, where $M \in S_X \times S_Y$ and $N$ is $1$ or is a single cycle containing both elements of $X$ and $Y$.

Now suppose that $N \neq 1$ and has at least $2$ adjacent elements that are both $\in X$ or both $\in Y$. WLOG, assume that $N = (x_1, x_2, t_1, t_2, \dotsc, t_k)$ with $x_1, x_2 \in X$, then $N$ can also be rewritten as $(x_1, x_2)(x_2, t_1, t_2, \dotsc, t_k)$. Continuing this process, we can represent $N$ in term of $N_1.T$ where $T = (x_1, y_1, x_2, y_2, \dotsc, x_k, y_k)$ or $1$ with some $N_1 \in S_X \times S_Y$, and $x_i \in X$, $y_i \in Y$.

If we let $\alpha = M.N_1 \in S_X \times S_Y$, and $\beta = T$, we will get the strong $M$-decomposition of $h$.
\end{proof}

\begin{defi}\label{def3}
Given $m \in \mathbb{Z}$, and a partition $\mathbf{P} = \{X_0, X_1,\dotsc X_p\}$ of $\Sigma = \{1, 2,\dotsc, m\}$, we say that a subgroup $G$ of $S_m = S_{\Sigma}$ has $M$-property wrt $\mathbf{P}$ if for any $h \in G$, and $\sigma \in \prod\limits_{i=1}^p S_{X_i}$, $\sigma h \sigma^{-1} \in G$

Now for a subset $T$ of $S_m$, we define $G(T, \mathbf{P})$ to be the smallest subgroup of $S_m$ that contains $T$ and has $M$-property wrt $\mathbf{P}$. It is easy to see that $G(T, \mathbf{P})$ is the subgroup generated by $\{\sigma h \sigma^{-1}, h \in T, \sigma \in\prod\limits_{i=1}^p S_{X_i}\}$
\end{defi}

\begin{lem}\label{lm5}
Suppose $m \in \mathbb{Z}$, and $S_m = S_{\Sigma'}$ with $\Sigma \subset \Sigma' = \{1, 2,\dotsc, m\}$. Let $\mathbf{P} = \{\Sigma'\setminus\Sigma, X, Y\}$ be a partition of $\Sigma'$ such that $|\Sigma| > 4$. Assume that a subgroup $G$ of $S_m$ has $M$-property wrt $\mathbf{P}$, and $\alpha(aby) \in G$ for some $\alpha \in S_X \times S_Y$, $a, b \in X$, and $y \in Y$. Then $A_{\Sigma} = A_{X \cup Y} \subset G$
\end{lem}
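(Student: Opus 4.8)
The plan is to promote the relevant part of $G$ to a normal subgroup of the full symmetric group $S_{\Sigma}$, and then read off $A_{\Sigma}\subseteq G$ from the classification of normal subgroups of a symmetric group; the hypothesis $|\Sigma|>4$ enters exactly at that last step.

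First I would replace $G$ by the smaller group $H:=G(\{\alpha(aby)\},\mathbf{P})$. By \cref{def3}, $H$ is the subgroup of $S_m$ generated by all $S_X\times S_Y$-conjugates of $\alpha(aby)$; hence $H\subseteq S_{\Sigma}$, $H$ has $M$-property with respect to $\mathbf{P}$ (so $S_X\times S_Y$ normalizes $H$), and $H\subseteq G$, because $G$ is a subgroup of $S_m$ that contains $\alpha(aby)$ and has $M$-property while $H$ is the smallest such. So it suffices to prove $A_{\Sigma}\subseteq H$.

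Set $\widetilde{G}:=\genby{H\cup(S_X\times S_Y)}\le S_{\Sigma}$. Since $S_X\times S_Y$ normalizes $H$ we get $H\trianglelefteq\widetilde{G}$, and the main step is to check that $\widetilde{G}=S_{\Sigma}$. As $\alpha\in S_X\times S_Y\subseteq\widetilde{G}$ and $\alpha(aby)\in H\subseteq\widetilde{G}$, the $3$-cycle $(aby)=\alpha^{-1}\cdot\alpha(aby)$ lies in $\widetilde{G}$, and therefore so does the transposition $(by)=(ab)(aby)$, since $(ab)\in S_X\subseteq\widetilde{G}$. Conjugating $(by)$ by the elements of $S_X$ (which fix $y\in Y$) produces every transposition $(xy)$ with $x\in X$, and conjugating those by $S_Y$ produces every transposition $(xy')$ with $x\in X$, $y'\in Y$; adjoining the transpositions already present inside $S_X$ and inside $S_Y$, we conclude that $\widetilde{G}$ contains all transpositions of $\Sigma$ (for instance all of those through the point $b$), hence $\widetilde{G}=S_{\Sigma}$.

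Now $H$ is a normal subgroup of $S_{\Sigma}=\widetilde{G}$, and it is nontrivial: indeed $\alpha(aby)\ne 1$, since otherwise $\alpha=(aby)^{-1}=(ayb)$ would send $y\in Y$ to $b\in X$, contradicting $\alpha\in S_X\times S_Y$. Because $|\Sigma|>4$, the only normal subgroups of $S_{\Sigma}$ are $1$, $A_{\Sigma}$ and $S_{\Sigma}$, so $A_{\Sigma}\subseteq H\subseteq G$, as required. I do not expect a genuine obstacle here; the points to be careful about are the bookkeeping that $M$-property passes to $G(\{\alpha(aby)\},\mathbf{P})$, the cycle identity $(ab)(aby)=(by)$, and the elementary fact that transpositions within $X$, within $Y$, and across $X$ and $Y$ together generate $S_{\Sigma}$ — with $|\Sigma|>4$ used only to exclude the extra normal subgroup possessed by $S_4$.
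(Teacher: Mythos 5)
Your argument is correct, and it is genuinely different from the one in the paper. The paper proves \cref{lm5} by direct manipulation: starting from $\alpha(a,b,y)\in G$ it repeatedly multiplies the element by its conjugates under carefully chosen $\sigma\in S_X\times S_Y$ to strip $\alpha$ down (first killing transpositions by squaring, then killing cycles inside $S_X$ and $S_Y$, picking up $A_X\subset G$ along the way), until only $(a,b,y)\in G$ or $(a,b)(c,y)\in G$ remains, and then generates all $3$-cycles by hand. You instead pass to the $M$-closure $H=G(\{\alpha(aby)\},\mathbf{P})\subseteq G$, observe that $S_X\times S_Y$ normalizes $H$, check that $\widetilde{G}=\genby{H\cup(S_X\times S_Y)}$ is all of $S_\Sigma$ (via the $3$-cycle $(aby)$ and a cross transposition; note that whether $(ab)(aby)$ equals $(by)$ or $(ay)$ depends on the composition convention, but either way it is a transposition joining $X$ to $Y$, so nothing changes), and then invoke the classification of normal subgroups of $S_\Sigma$ for $|\Sigma|>4$ to force $A_\Sigma\subseteq H$. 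Your nontriviality check for $H$ and the inclusions $H\subseteq S_\Sigma$, $H\subseteq G$ are all justified. What your route buys is brevity and the elimination of the paper's case analysis (the cases $(a,b,y)\in G$ versus $(a,b)(c,y)\in G$ never arise), with the hypothesis $|\Sigma|>4$ entering transparently as the exclusion of the Klein four normal subgroup of $S_4$; what the paper's route buys is a self-contained, purely computational argument whose intermediate products (e.g.\ $A_X\subset G$, explicit $3$-cycles) mirror the manipulations reused later in \cref{lm6} and \cref{lm11}, whereas your proof is a one-shot application of a standard structural fact.
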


\begin{proof}
Starting with arbitrary $\alpha \in S_X \times S_Y$, we will try to find $\alpha'$ with much simpler form such that $\alpha'(a, b, y)$ is still in $G$. 

First, we notice that $(a, b, y).\alpha \in G$. Hence $\alpha(a, b, y)(a, b, y)\alpha \in G$, and therefore $\alpha^2(a, y, b) \in G$. Repeating this argument, we get $\alpha^{4}(a, b, y) \in G$. So first, we can always eliminate all transpositions, if any, in the cycle decomposition of $\alpha$. 

Moreover, if some cycle $(\dotsc, m, n, \dotsc)$, which is not a transposition, appears the cycle decomposition of $\alpha$ such that $m, n$ is not $a,b$, and all elements in this cycle are in $X$, then we can consider $\sigma \in S_X$ that swap $m$ and $n$. 

We have 
\begin{equation*}
\begin{split}
\alpha(\sigma \alpha\sigma^{-1})^{-1} & = (x_1, \dotsc, x_k, m, n, y_1, \dotsc, y_l).(y_l, \dotsc, y_1, m, n, x_k, \dotsc, x_1) \\
& = (n, m, y_1) \in G. 
\end{split}
\end{equation*}
Therefore, $A_X \subset G$. Using this observation, we can eliminate all even permutation of $S_X$, and therefore all elements of $S_X$ that are in the cycle decomposition of $\alpha$. 

As a result, we can assume that in the cycle decomposition of $\alpha$, its only possible non-identity term in $S_X$ is a single $3$-cycles that contains $a, b$ and some other element $c \in X$. Using similar argument, we can further assume that $\alpha$ contains no non-identity element of $S_Y$. Therefore, we must have either $(a, b, y) \in G$ or $(a, b, c)(a, b, y) = (a, c)(b, y) \in G$ or $(a, c, b)(a, b, y) = (b, y, c) \in G$.

In short, we have $(a, b)(c, y) \in G$ or $(a, b, y)\in G$, with some $a, b, c \in X, y \in Y$. 

If $(a, b, y) = (y, a, b) \in G$, then $(y, c, d) \in G$ with any $c, d \in X$, and so $A_{X \cup \{y\}} \in G$. As a result, $(a, b, c) \in G$ with any $c \neq a, b$ in $X$, if any. Since $(a, b, y) \in G$, $(a, b, z)  \in G$ for each $z \in Y$. Therefore, $(a, b, t) \in G$ with every $t \neq a, b$. Hence $A_{\Sigma} \subset G$. 

The case when $(a, b)(c, y) \in U$ is handled in a similar fashion.
\end{proof}
\begin{lem}\label{lm6}
Let $H$ be a subset of $S_n$ ($n \geq 10$), and $\mathbf{P} = \{X, Y\}$ is a partition of $\Sigma = \{1, 2,\dotsc, n\}$. There exists a subset $K$ of $H$ with $|K| \leq 9$ such that $H \subset G(K, \mathbf{P})$
\end{lem}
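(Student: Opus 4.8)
The plan is to recast the claim as a statement about the length of chains of subgroups having the $M$-property with respect to $\mathbf{P}=\{X,Y\}$ (equivalently, subgroups closed under conjugation by $S_X\times S_Y$), and then to bound that length using the chief-series structure of $S_X\times S_Y$ together with \cref{lm5}. Since $K\subseteq H$ always gives $G(K,\mathbf{P})\subseteq G(H,\mathbf{P})$, the conclusion $H\subseteq G(K,\mathbf{P})$ is equivalent to $G(K,\mathbf{P})=G(H,\mathbf{P})$. So I would choose $K$ greedily: start from $K=\emptyset$, so $G(K,\mathbf{P})=\{1\}$, and while $H\not\subseteq G(K,\mathbf{P})$, adjoin to $K$ any $h\in H\setminus G(K,\mathbf{P})$. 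Since $h\notin G(K,\mathbf{P})$ but $h\in G(K\cup\{h\},\mathbf{P})$, each step strictly enlarges $G(K,\mathbf{P})$; as $S_n$ is finite the process terminates, and then $H\subseteq G(K,\mathbf{P})$, with $|K|$ equal to the length $t$ of the resulting strict chain $\{1\}=G(K_0,\mathbf{P})\subsetneq\dots\subsetneq G(K_t,\mathbf{P})$. Hence it suffices to show that \emph{every strictly increasing chain $1=G_0\subsetneq G_1\subsetneq\dots\subsetneq G_t$ of $M$-subgroups of $S_n$ (for $n=|X|+|Y|\ge 10$) has $t\le 9$.}

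Call $g\in S_n$ \emph{mixing} if $g\notin S_X\times S_Y$; by \cref{lm4} this is the same as saying the single alternating cycle $\beta$ in a strong $M$-decomposition $g=\alpha\beta$ is nontrivial. Let $r$ be the largest index with $G_r\subseteq S_X\times S_Y$. For $i\le r$ the group $G_i$ lies in $S_X\times S_Y$ and is closed under conjugation by $S_X\times S_Y$, hence is normal in $S_X\times S_Y$; so $G_0\subsetneq\dots\subsetneq G_r$ is a chain of normal subgroups of $S_X\times S_Y$, and therefore $r$ is at most the length of a chief series of $S_X\times S_Y$, i.e.\ the sum of the chief lengths of $S_{|X|}$ and $S_{|Y|}$. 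Those chief lengths are $0,1,2,3,2,2,\dots$ in degrees $1,2,3,4,5,6,\dots$, and since $n\ge 10$ at most one of $|X|,|Y|$ is $\le 4$; hence $r\le 5$.

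It remains to bound $t-r$. Each of $G_{r+1},\dots,G_t$ contains a mixing element, and the key claim is: an $M$-subgroup $G$ containing a mixing element satisfies \textbf{(i)} $A_n\le G$, or \textbf{(ii)} $|X|=|Y|$ and $G\le S_X\wr S_2$ (the stabiliser in $S_n$ of the partition $\{X,Y\}$). Case (ii) is precisely the statement that every element of $G$ fixes or interchanges the parts $X$ and $Y$; if it fails, some $g\in G$ has $g(X)\ne X$ and $g(X)\ne Y$, and one derives (i) as follows. Writing $g=\alpha\beta$ with $\beta=(x_1y_1\cdots x_ky_k)$ a strong $M$-decomposition and taking $\sigma=(x_1\,x')\in S_X$, the element $g\cdot(\sigma g\sigma^{-1})^{-1}=\alpha(\beta\sigma\beta^{-1})\alpha^{-1}\sigma^{-1}$ belongs to $G$; since $\beta\sigma\beta^{-1}=(y_1\,\beta(x'))$ is a single transposition, for $x'$ chosen in $X\setminus\operatorname{supp} g$ this collapses — after one more conjugation by an element of $S_X$ fixing $\operatorname{supp}\alpha$ — to a $3$-cycle $(a\,b\,y)$ with $a,b\in X$, $y\in Y$, whence \cref{lm5} (applicable since $|X\cup Y|=n>4$) yields $A_n\le G$. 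Granting the claim, the top of the chain is short: in case (i) nothing lies strictly between $A_n$ and $S_n$, so $t\le r+2$; in case (ii) a direct inspection of the subgroup lattice of $S_X\wr S_2$ shows the chain gains at most three further terms beyond $G_r$ (at most one index-$2$ subgroup of $S_X\wr S_2$, then $S_X\wr S_2$ or $A_n$, then $S_n$), so $t\le r+3$, and here $|X|=|Y|\ge 5$ forces $r\le 4$. In every case $t\le 7\le 9$.

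The main obstacle is establishing case (i) in the degenerate situation where $g$ moves almost every point, so that no $x'\in X$ can be taken outside $\operatorname{supp} g$ (and likewise for $Y$). A counting argument then shows $|X|=|Y|$ and pins $g$ down to a block-swap altered on only boundedly many points; one regains room by passing to a suitable power of $g$, or to a product $g\cdot(\sigma g\sigma^{-1})^{-1}$ for a cleverly chosen $\sigma\in S_X\times S_Y$, and checks that one of these again produces, after conjugation, an element of the shape $\alpha(a\,b\,y)$. The remaining ingredients — the chief-length bookkeeping of the second paragraph and the small classification of $M$-subgroups inside $S_X\wr S_2$ used for case (ii) — are routine.
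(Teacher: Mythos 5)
Your chain-length framework (greedy choice of $K$, so that $|K|$ is bounded by the longest strict chain of subgroups normalized by $S_X\times S_Y$, then chief-length bookkeeping inside $S_X\times S_Y$ and a short analysis above it) is a sound and genuinely different organization from the paper's proof, and would even give a bound better than $9$. But there is a real gap at the one step that carries all the content: the claim that an $M$-subgroup $G$ containing some $g$ with $g(X)\notin\{X,Y\}$ must contain $A_n$. Your derivation needs a point $x'\in X\setminus\operatorname{supp} g$, so that $\alpha(x')=x'$ and $g(\sigma g\sigma^{-1})^{-1}=(\alpha(y_1),\alpha(x'))(x_1,x')$ collapses to a $3$-cycle; however the hypothesis $g(X)\notin\{X,Y\}$ only guarantees a point of $X$ (or of $Y$) off the mixing cycle $\beta$, and that point may perfectly well be moved by $\alpha$, i.e.\ lie in $\operatorname{supp} g$. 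Your fallback description of the ``degenerate situation'' is false: a $g$ moving every point need not have $|X|=|Y|$, nor be a block swap altered on boundedly many points --- take $\beta=(x_1\,y_1)$ a single crossing transposition and $\alpha$ fixed-point-free on the remaining points of each block, with $|X|\neq|Y|$ arbitrary. The proposed remedies (``a suitable power of $g$'', ``a cleverly chosen $\sigma$'', ``one checks'') are exactly the unproved crux, so as written the dichotomy (i)/(ii), and with it the bound on the top of the chain, is not established.

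The repair is available and is precisely what the paper does in its Case 1: choose $b\in X$ off the cycle $\beta$ only (if $\beta$ exhausts $X$, argue symmetrically with a point of $Y$), set $\sigma=(a\,b)$ with $a\in X$ lying on $\beta$ adjacent to some $c\in Y$, and note that $g(\sigma g\sigma^{-1})^{-1}$, after conjugating by $\alpha_2=\sigma\alpha\sigma^{-1}\in S_X\times S_Y$, equals $\alpha_3\,(c\,b\,a)$ with $\alpha_3=\alpha_2^{-1}\alpha\in S_X\times S_Y$. No collapse to a clean $3$-cycle --- hence no point outside $\operatorname{supp} g$ --- is needed, because \cref{lm5} is stated for $\alpha(a\,b\,y)$ with an arbitrary factor $\alpha\in S_X\times S_Y$ in front (and its proof also covers the shape $(a\,b)(c\,y)$, which is what your commutator yields when $\alpha(x')\neq x'$). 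With that substitution your dichotomy holds, and the rest of your outline goes through: $r\le 5$ by the chief-length count, and the swap-containing $M$-subgroups of $S_X\wr S_2$ are only the three subgroups of index at most $2$ containing $(S_X\times S_Y)\cap A_n$ --- though this last point deserves the short commutator computation (conjugates of a swap by $S_X\times 1$ force $A_X\times A_Y$ and an odd-odd element into $G$) rather than the word ``routine'', since it is the analogue of the paper's Case 2.
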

\begin{proof}
$\mathbf{P}$ is fixed throughout the proof of this lemma so we can just write $G(T)$ instead of $G(T, \mathbf{P})$.

Now we consider the following cases:

$\mathbf{Case\ 1}$: Now assume that for some $h_{i_0}$, the $\beta$ component in its $M$-decomposition (\cref{lm4}) is not $1$ and doesn't contain some $b \in X$. We have $\beta = (\dotsc, a, c, \dotsc)$ for some $a\in X, c\in Y$. Consider element $\sigma \in S_X \times S_Y$ that swaps $a$ and $b$. 

Let $U = G(\{h_{i_0}\})$. Then $\alpha_2\sigma\beta\sigma^{-1} = \sigma h_{i_0} \sigma^{-1} \in U$ with $\alpha_2 = \sigma\alpha\sigma^{-1} \in S_X \times S_Y$. Therefore, $\alpha \beta (\sigma\beta\sigma^{-1})^{-1}(\alpha_2)^{-1} \in U$. Hence, $\alpha_2^{-1}\alpha\beta(\sigma\beta\sigma^{-1})^{-1} \in U$. 

In other word, 
\begin{equation*}
\begin{split}
\alpha_3\beta(\sigma\beta\sigma^{-1})^{-1} &= \alpha_3(x_1, \dotsc, x_k, a, c, y_1, \dotsc, y_l)(y_l, \dotsc, y_1, c, b, x_k, \dotsc, x_1) \\
& =  \alpha_3(cba) \in U
\end{split}
\end{equation*} for some $\alpha_3 = \alpha_2^{-1}\alpha \in S_X \times S_Y$. Hence, its inverse $(abc).\alpha_3^{-1}  \in U$. 

As a result, $\alpha_3^{-1}(a, b, c) \in U$, and by \cref{lm5}, $A_n \subset U$.

Now if all $h_j$ are even permutation then $U$ already contains all $h_j$. Otherwise, we just need at most one element $h_{j_0}$ together with $U$ to generate the whole $S_{\Sigma}$, and therefore generate all $h_i$. Therefore, our set $K$ needs at most 2 elements $h_{i_0}$ and $h_{j_0}$.

$\mathbf{Case\ 2}$: Now we only need to handle the case when the $M$-decomposition of $h \in H$ has the form $h = \alpha\beta$ with $\alpha \in S_X \times S_Y$, and $\beta = (x_1, y_1, \dotsc, x_k, y_k)$, where $k = |X| = |Y|, x_i \in X, y_i \in Y$ or $\beta = 1$. Note that $n \geq 10$ so $k \geq 5$.

Let $H_1 = \{h_i \in H: \beta = 1\}$, and $H_2 = \{h_i \in H: \beta = (x_1, y_1, \dotsc, x_k, y_k)\}$. We now prove that $K$ needs at most four $h_i \in H_1$, and needs five more $h_j \in H_2$ to generate all elements in $H = H_1 \cup H_2$. 

Suppose there exists some $h_1 = \alpha = A.B \in S_X \times S_Y$, where $A, B$ are non-trivial permutations of $S_X$ and $S_Y$. Then consider $U = G(\{h_1\})$. For any $\sigma \in S_X$, $\sigma h_1\sigma^{-1}h_1^{-1} = \sigma A \sigma^{-1}A^{-1} \in U$. Since $A$ is not identity, there exists $\sigma$ such that $\sigma A \sigma^{-1}A^{-1}$ is a non-trivial permutation of $S_X$. Therefore, $A_X \in U$. Similarly, $A_Y \in U$. We say that $h_i \sim h_j$ with $h_i, h_j \in H_1$ if $h_ih_j^{-1} \in A_X \times A_Y$. Note that, for any equivalent class $C$, $C \subset G(\{g_i\}\cup\{h_1\})\ \forall g_i \in C$. Therefore, we need $\leq 3$ other $h_i$ to generate the whole $H_1$ because there are at most $4$ equivalent classes, and so $K$ only needs $\leq 4$ elements of $H_1$ to generate all elements in $H_1$. If $H_1 \in S_X \cup S_Y$, then by similar arguments, $K$ only needs $\leq 2 + 2 = 4$ elements from $H_1$.

Now we will handle $H_2$. Choose a fixed cycle $C = (u_1, v_1, \dotsc, u_k, v_k), u_i \in S_X, v_i \in S_Y$. For each $h_i = \alpha(x_1, y_1, \dotsc, x_k, y_k)$, there exist $\sigma_i \in S_X \times S_Y$ such that: $\sigma_ih_i\sigma_i^{-1} = (\sigma_i\alpha\sigma_i^{-1})(\sigma_i(x_1, y_1, \dotsc, x_k, y_k)\sigma_i^{-1}) = \alpha_i.C$. If $\alpha_i$ are the same for all $i$, then $G(\{h_i\}) $ contains $H_1$ for any $h_i \in H_1$.

Now suppose there is some $i_0$ and $j_0$ such that $ \alpha_{i_0} \alpha_{j_0}^{-1} = A.B$, $A$ and $B$ are non-trivial permutations in $S_X$ and $S_Y$. Then again, $A_X$, and $A_Y$ are both subsets of $G(\{\alpha_{i_0}, \alpha_{j_0}\})$. 

We say that $h_m \sim h_n$ if  $\alpha_m\alpha_n^{-1} \in A_X \times A_Y$. Using the same argument for $H_1$, we only need at most 3 more $h_k$ so that $G(\{h_{i_0}, h_{j_0}, h_{k_1}, h_{k_2}, h_{k_3}\})$ contains $H_2$. 

The case $\alpha_{i} \alpha_{j}^{-1} \in A_X \cup A_Y \ \forall i, j$ is handled in a similar way.

As a result, our set $K$ needs $\leq 4 + 5 = 9$ elements. 
\end{proof}
\begin{thm} \label{thm2}
Suppose that $H \neq A_n$ is a proper subgroup of the symmetric group $S_n$, and $H$ is transitive. Then $m(H) \leq n - 4 \ \forall n \geq 25$. If $H$ is a proper subgroup of $A_n$, then $m(H) \leq n-4 \ \forall n \geq 9$.
\end{thm}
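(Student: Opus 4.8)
The plan is to induct on $n$, using that a transitive group is either primitive or imprimitive. If $H$ is primitive and $H\neq A_n$, then \cref{lm3} together with the remark following it already gives $m(H)\le n-4$ for every $n\ge 9$; this settles the primitive subcase of both assertions, so from now on $H$ is imprimitive. Fix a \emph{maximal} nontrivial block system, with $d\ge 2$ blocks $\Gamma_1,\dots,\Gamma_d$ of common size $k\ge 2$ (so $n=kd$), realised as in \cref{def2} with $H\le S_\Gamma\wr S_\Delta$; maximality of the block system makes the induced block action primitive on the $d$ blocks.

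Let $N=H\cap\prod_i S_{\Gamma_i}\trianglelefteq H$ be the kernel of the action on blocks and $\bar H=H/N\le S_\Delta$ the (primitive, transitive) block action. Choose an irredundant generating set $(g_1,\dots,g_m)$ of $H$ with $m=m(H)$ and apply \cref{lm1} to $H$ and $N$: after reordering, $\overline{g_1},\dots,\overline{g_j}$ is an irredundant generating set of $\bar H$ and there are $h_{j+1},\dots,h_m\in N$ such that $(g_1,\dots,g_j,h_{j+1},\dots,h_m)$ is again an irredundant generating set of $H$. Thus $j\le m(\bar H)$, and with $l:=m-j$ the elements $h_{j+1},\dots,h_m$ form an irredundant generating set of $N':=\genby{h_{j+1},\dots,h_m}\le N$. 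Since $\bar H$ is primitive of degree $d<n$, either $\bar H\in\{S_d,A_d\}$, where $m(S_d)=d-1$ (Whiston) and $m(A_d)\le d-2$, or $\bar H$ is a proper primitive subgroup $\neq A_d$, where \cref{lm3} gives $m(\bar H)\le d-4$ (the finitely many small degrees $d$ being checked directly). In every case $m(\bar H)\le d-1$, so $m(H)\le (d-1)+l$, and it remains to prove $(d-1)+l\le kd-4$.

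To bound $l$, look at how $N'$ acts on a block, $P:=N'|_{\Gamma_1}\le S_{\Gamma_1}$. If $P\supseteq A_{\Gamma_1}$, then \cref{lm2} applies and yields $m(H)\le |\Gamma|+2|\Delta|-3=k+2d-3$, and $k+2d-3\le kd-4=n-4$ holds whenever $(k-2)(d-1)\ge 2$, i.e.\ for all admissible $(k,d)$ except essentially $k=2$ (and the tiny case $(k,d)=(3,2)$, which is irrelevant in either range), to be treated separately. The remaining possibility — $P$ a proper subgroup of $S_{\Gamma_1}$ not containing $A_{\Gamma_1}$, which in particular covers \emph{every} system with blocks of size $2$ (so $H\le C_2\wr S_d$ and $N'\le (C_2)^d$) — is the real content. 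Here the naive estimate $l\le i(N)\le\lambda(N)$ is useless, being of order $n$ rather than $\tfrac n2$, so I would bring in the $M$-decomposition: using \cref{lm4} write each generator relative to a two-part partition $\mathbf P=\{X,Y\}$ compatible with the blocks, apply \cref{lm6} to replace $\{g_1,\dots,g_m\}$ by a subset $K$ with $|K|\le 9$ and $H\subseteq G(K,\mathbf P)$, and use \cref{lm5} to argue that if $l$ were large enough then $G(K,\mathbf P)$ would contain some $\alpha(aby)$, forcing $A_{X\cup Y}=A_n\subseteq G(K,\mathbf P)$; since $H$ is transitive, proper, and $\neq A_n$, combining these containments with a count of how many of the $g_i$ can be independent modulo $S_X\times S_Y$-conjugation produces the bound. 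This is also where the thresholds diverge: when $H\le A_n$ one avoids the extremal imprimitive groups assembled from odd block permutations — foremost $C_2\wr S_d$ — so the bound holds already for $n\ge 9$, whereas accommodating those groups forces $n\ge 25$.

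The step I expect to be the main obstacle is exactly this last one: bounding the number $l$ of block-fixing generators when $N'$ acts on each block as a proper subgroup not containing the alternating group, above all for blocks of size $2$. Subadditivity- and $\lambda$-type inequalities are off by a constant factor there, so the argument leans entirely on the $M$-decomposition (Lemmas~\ref{lm4}--\ref{lm6}) and on a delicate count of \emph{independence modulo $S_X\times S_Y$-conjugacy}. A secondary nuisance, just as in the proof of \cref{lm3}, is the list of small pairs $(k,d)$ and small-degree block actions that must be disposed of by hand or by computer, and it is this bookkeeping that pins down the explicit constants $25$ and $9$.
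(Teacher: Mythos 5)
Your overall frame (reduce to the imprimitive case via \cref{lm3}, split an irredundant generating set by Whiston's lemma into block-action and block-fixing parts, and invoke \cref{lm2} when the kernel acts as $S_\Gamma$ or $A_\Gamma$ on a block) agrees with the paper, but the case you call ``the real content'' is set up incorrectly, and that is where the gap lies. When the kernel acts on every block as a proper subgroup not containing the alternating group, no $M$-decomposition is needed for $d\ge 3$: strong flatness of $S_k$ gives $m(X_i)\le k-2$ block by block, and the subadditive count $l\le\sum_i m(X_i)$ already yields $m(H)\le (d-1)+d(k-2)=n-d-1\le n-4$. The genuinely hard configuration is $d=2$ (two blocks of size $n/2$) in the equality case, which forces the kernel to act as $S_{U_i}\times S_{V_i}$ on each block; only there does the paper invoke \cref{lm6}, and it can do so because the full $S_{U}\times S_{V}$-conjugation demanded by the $M$-property is realized \emph{inside} the subgroup generated by the already-counted generators (the action on $\Gamma_1$ is all of $S_{U_1}\times S_{V_1}$, and a block-swapping element transports it to $\Gamma_2$); irredundancy then caps the remaining block-fixing generators at $9$, and $n/2+9\le n-4$ is exactly what produces the threshold $25$, while this configuration is impossible inside $A_n$, which is why $9$ suffices there. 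Your sketch instead routes the whole case, including blocks of size $2$, through \cref{lm4}--\cref{lm6}, and the logic does not close: $G(K,\mathbf P)$ is generated by $S_X\times S_Y$-conjugates that need not lie in $H$, so ``$A_{X\cup Y}\subseteq G(K,\mathbf P)$'' contradicts nothing about the proper transitive group $H$, and for $H\le C_2\wr S_d$ there is no way to realize the required conjugations inside $H$ at all, so no bound on $l$ comes out.

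The blocks-of-size-$2$ case is in fact handled by a tool your proposal never uses: \cref{lm15}, a linear-algebra argument in $F_2^{d}$ (if the block action is $S_d$ or $A_d$, two vectors together with the block generators already account for the relevant subspace; otherwise the block action has $m\le d-3$ and irredundancy bounds the number of vectors by $d-1$), giving $m(H)\le 2d-4=n-4$ for $d\ge 5$; in particular nothing about $C_2\wr S_d$ forces $n\ge 25$, contrary to your attribution of that threshold. Two smaller points: $k+2d-3\le kd-4$ requires $(k-2)(d-1)\ge 3$, not $\ge 2$, so besides $k=2$ you must also dispose of $(k,d)=(3,3)$, i.e.\ $n=9$, which matters for the $A_n$ statement and is settled in the paper by a separate argument using $2$-transitivity of the block action; and the naive bound you reject ($l\le i(N)\le\lambda(N)$) is not the estimate the paper relies on anywhere in this proof.
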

\begin{proof}
First, by \cref{lm3}, we only need to handle the case when $H$ is an imprimitive but transitive subgroup of $S_n$. Therefore, we can assume that $H \leq S_{\Gamma} \wr S_{\Delta}$. Using the same notation as in \cref{def2}, we have $n$ copies of $\Gamma$ or $n$ blocks, $\Gamma_1$, $\Gamma_2,\dotsc, \Gamma_{|\Delta|}$, and an irredundant generating set of length $m(H) = l$, $\{g_1,\dotsc, g_l\}$, so that $\{g_1, \dotsc, g_k\}$ generates block permutations while $K = \{g_{k+1}, \dotsc, g_l\}$ contains elements fixing the blocks. 
	
The proof of \cref{thm2} is based on Whiston's proof for $S_n$ with one exceptional case that is handled by \cref{lm6}.

Whiston proved that for $n \geq 7$, if $H \neq A_n$ is a transitive subgroup of the symmetric group $S_n$, and is not a subgroup of $S_2 \wr S_{\frac{n}{2}}$, then $m(H) \leq n - 3$. Cameron and Cara further showed that $m(H) \leq n - 3$ even if $H \leq S_2 \wr S_{\frac{n}{2}}$. Using GAP, Sophie Le checked for $S_5$ and $S_6$ and found that no transitive subgroup of $S_n$ other than $A_n$ has $m(H) > n -3$ for $n = \overline{5, 6}$. Therefore, if $H \neq A_n$ is a transitive subgroup of $S_n$, $n \geq 5$, then $m(H) \leq n - 3$.

Let $X_1$ be the smallest subset of $\{g_{k+1},\dotsc,g_l\}$ such that $X_1$ generates the action of $K$ on $\Gamma_1$, and suppose that $X_1 = \{g_{k+1},\dotsc,g_r\}$. Then there exists $h_i$ generated by $X_1$ such that $g_i' = h_ig_i$ with $i > r$ fixes $\Gamma_1$ pointwise. Let $K_1 = \{g_i', i >r\}$. We have $K_1$ is irredundant because $K$ is irredundant. Now take $X_2$ to be a subset of $K_1$ that generates the action on $\Gamma_2$. Then we will have an irredundant set $K_2$ that fixes $\Gamma_1$ and $\Gamma_2$. We continue this process to build $X_i$ that generates the action of $K_{i-1}$ on $\Gamma_i$, and $K_i$ be the set built from remaining $g_i$: $K_i = \{t_jg_j, t_j \in \genby{X_1 \cup \dotsc \cup X_i}$. Notice that $K_i$ must be irredundant, and therefore $K_{|\Delta|}$ must be an empty set, and so we have $l-k = |K| = \sum_i|X_i|$. Hence, $l - k \leq \sum_i m(X_i)$. We now consider several cases to prove the inequality $m(H) \leq n - 4$:

$\mathbf{Case\ 1}$: If there is no $i$ such that the subgroup generated by $H_1= \{g_{k+1},\dotsc, g_l\}$ acts as $S_{\Gamma}$ or $A_{\Gamma}$ on $\Gamma_i$. 
Since $X_i$ cannot acts as $S_{\Gamma}$ on $\Gamma_i$, $m(X_i) \leq |\Gamma|-2$, so we get $l - k \leq \sum_i m(X_i) \leq  |\Delta|(|\Gamma| - 2)$, and therefore $m(H) = l \leq n - |\Delta| -1 \leq n - 4$ for $|\Delta| \geq 3$.

Now suppose that $|\Delta| = 2$. Then $|\Gamma| \geq 5$. If either $X_1$ or $X_2$ acts transitively on $\Gamma_1$ or $\Gamma_2$, then $m(X_1)$ or $m(X_2) \leq |\Gamma| - 3$. Hence $l \leq 1 + |\Gamma| - 2 + |\Gamma| - 3 = n -4$. So now we just need to consider the case where $X_1$ and $X_2$ both acts intransitively on $\Gamma_1$ and $\Gamma_2$. In this case, there are partitions $\mathbf{P_i} = \{U_i, V_i\}$ of $\Gamma_i$ such that $X_i$ acts seperately on the sets $U_i, V_i$. Therefore $m(X_i) \leq m(X_i^{U_i}) + i(X_i^{V_i}) \leq |U_i| -1 + |V_i| - 1 = |\Gamma| - 2$ ($X_i^T$ is the group action of $X_i$ on $T$). Hence, $l \leq 1 + |\Gamma| - 2 + |\Gamma| - 2 = n - 3$. 

If the equality occurs, then $X_i$ must acts as $S_{U_i} \times S_{V_i}$ on $\Gamma_i$ for $i = \overline{1, 2}$. If $H$ is a subgroup of $A_n$, this case cannot happen.

If, however, $H$ is just a subgroup of $S_n$, then we will use \cref{lm6} to handle this case. 

Let $h_{21}$ be the element that move block $2$ to block $1$. Suppose that $h_{21}$, after moving the block, performs a permutation $\sigma$ on $\Gamma_1$. Let $h_{\delta}$ be an element in $L$, the subgroup generated by $\{ h_{21}^{-1}g_{k+1}h_{21}, \dotsc, h_{21}^{-1}g_rh_{21}\}$ such that $h_{\delta}$  fixes the blocks and performs permutation $\delta$ on $\Gamma_2$. Note that $\delta \in \sigma^{-1}X_1^{\Gamma_1} \sigma = S_{U_1'} \times S_{V_1'}$ for some partition $\{U_1', V_1'\}$ of $\Sigma$. 

Now the elements $h_{\delta}^{-1}g_i'h_{\delta}$ (here $g_i' = h_ig_i \in X_2 = K_1$) will again fixes $\Gamma_1$ and performs the permutation $\delta^{-1}g_i'\delta$ on $\Gamma_2$. Then by \cref{lm6}, we only need $\leq 9$ elements of $X_2$ together with $\{g_1,\dotsc,g_r\}$ to generate the whole $X_2$. Hence we must have: $m(H) \leq 1 + n/2 - 1 + 9 \leq n - 4$ for $n \geq 25$. 

$\mathbf{Case\ 2}$: If $|\Gamma| > 2$, and there is some $i$ such that $H$ acts as $A_n$ or $S_n$ on $\Gamma_i$. Then by \cref{lm2}, $m(H) \leq |\Gamma| + 2 |\Delta| - 3 = n - (|\Gamma| - 2)(|\Delta| - 1) - 1 \leq n - 3 - 1 = n - 4$ for $n = |\Gamma||\Delta| \geq 10$ and $|\Gamma| > 2$. 
 
For $n = 9$, $|\Gamma| = |\Delta| = 3$. WLOG, suppose $i = 1$. If $m(H) = n - 3 = 6$, then the block permutation of $H$ must be isomorphic to $S_3$, which is $2$-transitive, and then by \cref{lm21}, we only need at most $5$ elements to generate the whole $H$. Contradiction. Hence $m(H) \leq 5 \leq n -4$, and so the case $n = 9$ is also handled.

$\mathbf{Case\ 3}$: Now we need to handle the case when $|\Gamma| = 2, |\Delta| \geq 5$.
By \cref{lm15} (for $k = 4$), and the fact that $m(K) \leq n - 3$ for all transitive subgroup $K \neq A_n \leq S_n$ with $n \geq 5$, $m(H) \leq 2|\Gamma| - 4 = n - 4$.
\end{proof}

\begin{cor}\label{cor1}
$A_n$ is strongly flat for $n \geq 9$.	
\end{cor}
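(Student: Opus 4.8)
The plan is to show that $m(H)\le n-3$ for every proper subgroup $H<A_n$; since $A_n$ itself contains the irredundant generating set $\{(1,2,j):3\le j\le n\}$ of size $n-2$ (deleting $(1,2,k)$ leaves a generating set of the copy of $A_{n-1}$ fixing $k$, hence of a proper subgroup), this immediately gives $m(H)\le n-3<n-2\le m(A_n)$, which is strong flatness. I would split the proper subgroups of $A_n$ into the transitive and the intransitive ones. Throughout I will use the elementary fact that $i(G)\le i(N)+i(G/N)$ for $N\trianglelefteq G$ — apply \cref{lm1} to $K=\genby{S}$ and its normal subgroup $K\cap N$, for an arbitrary irredundant set $S$: the part of $S$ lying in $K\cap N$ stays irredundant there, and the image of the remaining part is an irredundant generating set of $K/(K\cap N)$ — together with the observation that $i$ is monotone under passing to subgroups, since irredundancy of a set is intrinsic (the subgroup it generates does not depend on the ambient group). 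For the \emph{transitive} case there is essentially nothing new: if $H\neq A_n$ is transitive, then \cref{lm3} together with the Remark following it (the primitive case) and \cref{thm2} (the imprimitive case — and note that the exceptional configuration in the proof of \cref{thm2} requiring $n\ge 25$ cannot occur inside $A_n$) already give $m(H)\le n-4\le n-3$ for all $n\ge 9$.

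For the \emph{intransitive} case, suppose $H$ has more than one orbit. Group the orbits into two nonempty $H$-invariant sets $\Omega_1,\Omega_2$ with $|\Omega_1|\le|\Omega_2|$, so that $|\Omega_1|\ge 1$ and $|\Omega_2|=n-|\Omega_1|\ge\lceil n/2\rceil\ge 5$. Let $N\trianglelefteq H$ be the pointwise stabilizer of $\Omega_1$ in $H$. Then $H/N$ embeds in $\operatorname{Sym}(\Omega_1)\cong S_{|\Omega_1|}$, and every element of $N$ fixes $\Omega_1$ pointwise and is an even permutation, hence induces an even permutation of $\Omega_2$, so $N$ embeds in $A_{\Omega_2}\cong A_{|\Omega_2|}$. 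Consequently $i(H)\le i(A_{|\Omega_2|})+i(S_{|\Omega_1|})\le(|\Omega_2|-2)+(|\Omega_1|-1)=n-3$, and in particular $m(H)\le n-3$. Here $i(S_m)=m-1$ is the classical flatness of $S_m$, and $i(A_m)=m-2$ for $5\le m<n$ is the corollary in smaller degree; accordingly I would organize the whole proof as an induction on $n$ with base cases $n\le 8$, where the values $i(A_m)=m-2$ for $5\le m\le 8$ are supplied by the GAP computations already cited, and where at each inductive stage one also uses the known equality $m(A_m)=m-2$ so that $i(A_m)=m-2$ is genuinely available (strong flatness alone bounds only proper subgroups).

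Combining the two cases gives $m(H)\le n-3<n-2\le m(A_n)$ for every proper $H<A_n$ and every $n\ge 9$, which is the assertion. The main point requiring care is the intransitive case: it is where one must already know $m(A_m)=m-2$ — not merely $m(A_m)\ge m-2$ — for the smaller alternating groups $A_{|\Omega_2|}$, and this is what forces the inductive organization and the appeal to the small-degree data; by contrast the transitive case, after \cref{lm3} and \cref{thm2}, is immediate, and $m(A_n)\ge n-2$ is witnessed by the explicit $3$-cycle generating set above.
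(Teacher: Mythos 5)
Your proposal is correct in outline, and its transitive half is exactly the paper's reduction (quote \cref{lm3} with the remark for the primitive case and \cref{thm2} for the imprimitive case, noting the $n\ge 25$ exception is irrelevant inside $A_n$); the intransitive half, however, takes a genuinely different route. The paper splits $\Sigma$ into $X\cup Y$ with $H\le S_X\times S_Y$, bounds $m(H)\le m(H^X)+m(H^Y)\le(|X|-1)+(|Y|-1)=n-2$ using only Whiston's flatness results for \emph{symmetric} groups, and then eliminates the equality case by parity: equality forces the projections to be all of $S_X$ and $S_Y$, which the paper argues is incompatible with $H\le A_n$ (since $S_X\times S_Y\not\le A_n$). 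You instead pass to the pointwise stabilizer $N$ of $\Omega_1$, observe $N\le A_{\Omega_2}$ and $H/N\le S_{\Omega_1}$, and get $i(H)\le i(A_{|\Omega_2|})+i(S_{|\Omega_1|})\le(|\Omega_2|-2)+(|\Omega_1|-1)=n-3$ in one stroke, with no equality analysis; your derivation of $i(G)\le i(N)+i(G/N)$ from \cref{lm1} is the same device the paper itself uses in Case 1 of \cref{lm3}, so that step is fine. What your route buys is a cleaner bound and a very localized use of parity (the single fact $N\le A_{\Omega_2}$); what it costs is the input $i(A_m)=m-2$ for all $5\le m<n$, which is precisely what forces your induction on $n$ and your base cases. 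Be careful with sourcing there: the GAP computations this paper actually cites concern primitive and transitive subgroups of small $S_n$, the classification for $A_7,A_8$, and the replacement property for $n\le 8$ --- none of them states $i(A_m)=m-2$ for $5\le m\le 8$, so you must either take this from Whiston's treatment of $A_n$ or verify it directly (for instance $i(A_5)=3$ because every proper subgroup of $A_5$ has $i\le 2$, so a $4$-element irredundant set is impossible). The paper's proof is arranged exactly so as to avoid any alternating-group input in lower degree, needing only $i(S_m)=m-1$ and the parity obstruction, which is why it requires no induction; both proofs, yours and the paper's, take $m(A_n)=n-2$ from Whiston as given, and your explicit $3$-cycle witness for $m(A_n)\ge n-2$ is a harmless addition.
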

\begin{proof}
For any proper subgroup $H$ of $A_n$, we will prove that $m(H) < n - 2 = m(A_n)$.

By \cref{thm2}, we only need to consider the case when $H$ is intransitive. Then there exists a partition $\mathbf{P} = \{X, Y\}$ of $\Sigma = \{1, 2,\dotsc, n\}$ so that $H \subset S_X \times S_Y \leq S_n = S_{\Sigma}$. Suppose $\pi_X: S_X \times S_Y \to S_X$ is the projection from $S_X \times S_Y$ to its first factor, and $H^X = \pi_X(H)$. Similarly, we have $\pi_Y$, and $H^Y$. Therefore, $m(H) \leq m(H^X) + m(H^Y) \leq |X| - 1 + |Y| - 1 = n - 2$. The equality holds if and only if $H^X = S_X$ and $H^Y = S_Y$, and therefore, $H$ must be $S_X \times S_Y$. Contradiction since $S_X \times S_Y$ is not a subgroup of $A_n$. Therefore, $m(H) \leq n - 3$, and we finish the proof for \cref{cor1}.
\end{proof}

Now to classify all irredundant generating set of size $n-2$ in $A_n$, we will consider graphs which are similar to the one used in Cameron and Cara's paper.
\begin{defi}
Let $G$ be the group $S_n$ or $A_n$. Suppose $\{h_1, h_2,\dotsc,h_{n-2}\}$ is an irredundant generating set of length $n - 2$ of $G$. Let $G_i$ be the proper subgroup of $S_n$ generated by $(h_i)_{i \neq j}$. Then by \cref{thm2}, for $G = A_n$, and $n \geq 9$, or for $G = S_n$ and $n \geq 25$, $G_i$ is an intransitive subgroup if $G_i \neq A_n$. Now we will build the $\mathbf{original\ graph}\ Gr_o$. We start with the set of vertices $\Sigma = \{1, 2, \cdots, n\}$. 

Now for each $i \in \overline{1, n-2}$ such that $G_i$ is intransitive, there exists $x$ and $y$ in $\Sigma$ such that $h_i$ maps $x$ to $y$, and $x, y$ are not in the same $G_i-orbit$. Such $x, y$ exist because if $x, y$ are in the same $G_i-orbit$ whenever $h_i$ maps $x$ to $y$, then the orbits of $G$ are the same as the orbits as $G_i$. This is impossible because $G_i$ is intransitve while $G$ is transitive. Add the edge that connects $x$ to $y$, $(x,y)$, to the graph and named this edge $h_i$. Then we have a graph $G_o$ with either $n - 2$ or $n - 3$ edges. Here note that the edge $(x, y)$ is different from the transposition $(x, y)$ even though they are related to each other.
\end{defi}

\begin{lem}
The graph $G_o$ is a forest
\end{lem}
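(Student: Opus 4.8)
The plan is to argue by contradiction: I suppose $G_o$ contains a cycle and derive a contradiction with the defining property of its edges. First I would record the relevant combinatorics. Each index $i$ with $G_i$ intransitive contributes exactly one edge, named $h_i$; hence the edges of $G_o$ are labelled by pairwise distinct indices, and any cycle consists of edges $h_{i_1},h_{i_2},\dots,h_{i_k}$ with $i_1,\dots,i_k$ pairwise distinct, running through distinct vertices $v_1,v_2,\dots,v_k$ of $\Sigma$, where $h_{i_j}$ joins $v_j$ to $v_{j+1}$ (indices taken mod $k$, so $v_{k+1}=v_1$). Here $k\ge 2$; the value $k=2$ can only occur if two parallel edges are present, and the argument below will cover that case as well. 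Loops do not occur, since by construction each edge $h_i$ joins two vertices lying in \emph{different} $G_i$-orbits, hence two distinct vertices.

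The heart of the argument is to play the defining property of a single edge of the cycle against the group generated by all the other generators. Fix the edge $h_{i_1}$, and let $G_{i_1}=\genby{\{h_j\}_{j\neq i_1}}$. By construction, the two endpoints $v_1,v_2$ of $h_{i_1}$ lie in \emph{different} $G_{i_1}$-orbits. On the other hand, for every $j\in\{2,\dots,k\}$ we have $i_j\neq i_1$, so $h_{i_j}\in G_{i_1}$; and by the construction of the edge $h_{i_j}$, either $h_{i_j}$ or $h_{i_j}^{-1}$ maps $v_j$ to $v_{j+1}$, so $v_j$ and $v_{j+1}$ lie in the \emph{same} $G_{i_1}$-orbit. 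Chaining these relations along the path $v_2-v_3-\cdots-v_k-v_1$ shows that $v_1$ and $v_2$ lie in the same $G_{i_1}$-orbit — contradicting the previous sentence. Therefore $G_o$ contains no cycle, i.e.\ it is a forest.

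I do not anticipate a substantive obstacle; the proof is essentially this one observation. The only points that need a little care are: (i) the endpoints of the edge named $h_i$ need not be oriented consistently by $h_i$ itself, so one must phrase the orbit step as ``$h_{i_j}$ or $h_{i_j}^{-1}$ maps $v_j$ to $v_{j+1}$'', which is harmless because $G_{i_1}$ is a group and so closed under inverses; (ii) one should note that loops and parallel edges are excluded (loops by the ``different orbits'' clause, parallel edges as the $k=2$ instance of the main computation); and (iii) one should confirm that the edges of any cycle carry distinct labels $i_j$, which is immediate since the construction attaches at most one edge to each index. None of these require any real work, so the lemma follows directly once the orbit-chaining observation is in place.
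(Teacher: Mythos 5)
Your proposal is correct and is essentially the paper's own argument: fix one edge $h_i$ of a putative cycle, note its endpoints lie in different $G_i$-orbits, and observe that the remaining edges of the cycle correspond to generators $h_j$ with $j\neq i$, hence lie in $G_i$ and chain the two endpoints into the same $G_i$-orbit, a contradiction. Your extra remarks on inverses, loops, and parallel edges are fine but add nothing beyond the paper's (terser) proof.
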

\begin{proof}
If there is a cycle, then consider an edge $(x, y)$ that correponds to some $h_i$ in this cycle. Then we can go around the cycle from $x$ to $y$ through other edges correponding to $h_j$ with $j \neq i$. This leads to contradiction because $x$ and $y$ are not in the same $G_i$-orbit.
\end{proof}
\begin{lem}
If $G = A_n$, then the corresponding forest $Gr_o$ has $n-2$ edges. Moreover, each element $h_i$ is the product of exactly 2 transpositions, one of which correponds to an edge lying in $Gr_o$
If $G = S_n$ then there are two possible cases:
\begin{itemize}[leftmargin=0.1 in]
	\item There is exactly one $G_i = A_n$, and we can assume $i = 1$. Then $G_o$ has $n-3$ edges. Moreover, $h_1$ is a transpositions, and other $n -3$ elements $h_i$ are product of exactly $2$ transpositions, one of which already appear in $Gr_o$.  
	\item Or if for every $i$, $G_i \neq A_n$, then $Gr_o$ has $n-2$ edges, and each element $h_i$ is product of $\leq 2$ transpositions, one of which corresponds to an edge lying in $Gr_o$. 
\end{itemize}
Finally, let $E_1(h_i)$ be the edge that connects $x$ to $y$ and lies in $Gr_o$ and corresponds to one transposition $(x, y)$ of $h_i$, and let $E_2(h_i)$ be the edge that correponds to the other transposition, if any. For the transposition $h_1$ in $S_n$ case, we just use the notation $E_1(h_1)$
\end{lem}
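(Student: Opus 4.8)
The plan is to separate the combinatorial bookkeeping — how many edges and components $Gr_o$ has, and the parity of $h_1$ — from the one substantive point, namely that every $h_i$ carrying an edge has reflection length at most $2$; granting the latter, the assertion that one of its two transposition factors is an edge of $Gr_o$ comes for free.

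\emph{Combinatorics and parity.} If $G=A_n$ with $n\ge 9$, then by \cref{thm2} every $G_i$ is a proper, hence intransitive, subgroup of $A_n$, so each of the $n-2$ indices produces an edge; being a forest on $n$ vertices, $Gr_o$ then has $n-2$ edges and exactly two components. If $G=S_n$ with $n\ge 25$, then by \cref{thm2} each $G_i$ is $A_n$ or intransitive, and I would first note that at most one of them can be $A_n$: if $G_i=G_j=A_n$ with $i\ne j$ then every $h_k$ lies in $A_n$, forcing $\genby{h_1,\dots,h_{n-2}}\subseteq A_n\ne S_n$. If exactly one does, say $G_1=A_n$, then $h_1\notin A_n$ (otherwise every generator is even) while $h_2,\dots,h_{n-2}\in A_n$, index $1$ contributes no edge, and $Gr_o$ has $n-3$ edges and three components; if none is $A_n$ we are back in the two-component case, now possibly with odd permutations among the $h_i$. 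Finally, once $h_i$ is known to have reflection length $\le 2$, I would observe that $h_i$ cannot preserve the partition of $\Sigma$ into $G_i$-orbits (otherwise $\genby{G_i,h_i}=G$ would be intransitive), so some pair $(x,h_i(x))$ crosses two orbits; for a permutation of reflection length $\le 2$, every such pair — in particular the chosen edge pair $(x_i,y_i)$, which has $h_i(x_i)=y_i$ across orbits — appears among the transpositions of a length-$2$ factorisation, and this produces the claimed $E_1(h_i)$ (and, when present, $E_2(h_i)$).

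\emph{Reflection length.} I would prove that each edge-carrying $h_i$ has reflection length $\le 2$ by induction on $n$, the small cases being settled by the GAP computations already invoked for \cref{lm3} and \cref{thm2}. Fix such an $i$; then $\{h_j\}_{j\ne i}$ is an irredundant generating set of the intransitive group $G_i$. Because $h_j\in G_i$ for $j\ne i$, the two endpoints of every edge $E_1(h_j)$ lie in a single $G_i$-orbit, so each component of $Gr_o\setminus\{E_1(h_i)\}$ lies inside one orbit; in particular there is a $G_i$-invariant partition $\Sigma=X\sqcup Y$ with $x_i\in X$, $y_i\in Y$ and $G_i\le S_X\times S_Y$. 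I would then take the strong $M$-decomposition $h_i=\alpha\beta$ of \cref{lm4} relative to $\{X,Y\}$, with $\alpha\in S_X\times S_Y$ and $\beta$ either $1$ or an alternating cycle $(u_1,v_1,\dots,u_k,v_k)$; since $\genby{G_i,h_i}=G$ is transitive while $S_X\times S_Y$ is not, $\beta\ne 1$. The heart of the matter is to force $k=1$ and $\alpha$ of reflection length $\le 1$. The idea is that if $k\ge 2$, or if $\alpha$ is longer, then $h_i$ supplies ``too much connectivity'': using the inductive description of $G_i$ and of its one-generator-deleted subgroups $\genby{(h_l)_{l\ne i,\,l\ne j}}$ (intransitive groups read off from deleting an edge of the forest attached to $G_i$), one should be able to locate an index $j\ne i$ for which $\genby{(h_l)_{l\ne i,\,l\ne j}}$ is still transitive on $X$ and on $Y$, so that $\genby{(h_l)_{l\ne j}}=\genby{\genby{(h_l)_{l\ne i,\,l\ne j}},h_i}$ becomes transitive — in fact all of $G$, with \cref{lm5} used to see that a single crossing element pulls in the full alternating group — contradicting irredundancy. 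Hence $h_i=\alpha\beta$ with $\beta$ a transposition and $\alpha$ of reflection length $\le 1$, so $\operatorname{refl}(h_i)\le 2$. The parity information then promotes this to exactly $2$ for the even $h_i$ (all of them when $G=A_n$, and $h_2,\dots,h_{n-2}$ in the $S_n$ exceptional case, the identity being excluded by irredundancy), and it forces $h_1$, being odd of reflection length $\le 2$ hence $1$, to be a transposition; a slightly separate argument is needed for the edgeless generator $h_1$ in the $S_n$ exceptional case, comparing $\genby{H_i,h_1}$ with $\genby{A_n,h_1}=S_n$ for the intransitive subgroups $H_i=\genby{h_l : l\ne 1,\,l\ne i}$ of $A_n$.

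\emph{Main obstacle.} The hard part will be exactly the inductive/structural step above. An intransitive $G_i$ (or $H_i$) may have orbits of many sizes and may act on them as $(S_a\times S_b)\cap A_n$ or other index-two subgroups rather than full symmetric or alternating groups, and one must rule out, uniformly across all of these configurations, that the bridge generator $h_i$ has reflection length $\ge 3$ without destroying the irredundancy of some deleted subset $\genby{(h_l)_{l\ne j}}$. The $M$-decomposition of \cref{lm4} and the ``one crossing $3$-cycle forces $A_\Sigma$'' mechanism of \cref{lm5} are precisely the tools that organise this, so the bulk of the proof will be the case analysis they support; the edge count and the automatic-edge observation are comparatively routine.
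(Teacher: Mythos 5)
The bookkeeping half of your proposal (at most one $G_i=A_n$, the edge counts $n-2$ versus $n-3$, the parity of $h_1$ and of the remaining $h_i$, and the observation that a chosen crossing pair can be taken as one factor of a length-$\le 2$ factorisation) is correct and agrees with the paper. But the substantive half --- that every edge-carrying $h_i$ is a product of at most $2$ transpositions (at most $3$ for $i\neq 1$ in the exceptional $S_n$ case, with parity then finishing) --- is precisely the part you do not prove: ``one should be able to locate an index $j$\dots'' is a programme, not an argument, and as sketched it has concrete problems. The group $\genby{(h_l)_{l\neq j}}$ has no reason to be closed under conjugation by $S_X\times S_Y$, so \cref{lm5} cannot be applied to it directly (that lemma needs the $M$-property with respect to the partition); and even granting a crossing $3$-cycle, you give no mechanism producing an index $j$ for which $\genby{(h_l)_{l\neq i,\,l\neq j}}$ is transitive on both $X$ and $Y$, nor a reason why the failure of this for every $j$ forces $k=1$ and $\alpha$ of reflection length $\le 1$. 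Since you yourself flag this as the main obstacle, the lemma is not established by the proposal.

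For comparison, the paper proves the key bound with a short graph argument, with no induction on $n$, no $M$-decomposition and no use of \cref{lm5}. Fix $i$, write the full cycle decomposition of $h_i$, break each cycle $(u_1,\dots,u_k)$ into the adjacent transpositions $(u_1,u_2),\dots,(u_{k-1},u_k)$, and add the corresponding edges to $Gr_o$. If the enlarged graph contained a cycle, that cycle would have to use some edge $E_1(h_j)=(x,y)$ with $j\neq i$ (the $h_i$-path edges by themselves are disjoint paths and cannot close up); but every other edge of the enlarged graph corresponds to an element of $G_j$ --- the edges $E_1(h_k)$ with $k\neq j$ and all the $h_i$-edges, since $h_i\in G_j$ --- so going around the cycle the other way would put $x$ and $y$ in the same $G_j$-orbit, a contradiction. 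Hence the enlarged graph is a forest with at most $n-1$ edges. Moreover a path edge of $h_i$ can coincide with an edge already in $Gr_o$ only if that edge is $E_1(h_i)$ (coincidence with $E_1(h_j)$, $j\neq i$, would again merge two $G_j$-orbits), and $E_1(h_i)$ \emph{is} one of the path edges. The edge count then gives $d(h_i)\le 2$ with one factor equal to $E_1(h_i)$ when $Gr_o$ has $n-2$ edges, and in the exceptional $S_n$ case $d(h_1)\le 2$, $d(h_i)\le 3$ for $i\neq 1$; parity together with $h_i\neq 1$ (irredundancy) yields exactly the statement, including the clause that one transposition of each $h_i$ lies in $Gr_o$. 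Your combinatorial shell is compatible with this, but the reflection-length core needs to be replaced by (or completed into) an argument of this kind.
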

\begin{proof}
Consider a particular $h_i$, and write the cycle decomposition of $h_i$. Then represent each cycle $(u_1, u_2, \dotsc, u_k)$ in the decompostion in term of products of transpositions $(u_1, u_2), (u_2, u_3),\dotsc,(u_{k-1}, u_k)$. Now, add to the original graph $Gr_o$ edges that connect these $u_i$ to $u_{i+1}$. Suppose that there's a cycle in the resulting graph. If this cycle contains some edges $\{x, y\}$ that corresponds to $h_j$ with $j \neq i$, then by going along the other edge of this cycle, we can connect $x$ to $y$ by edges corresponding to elements in $G_j$, and this is again impossible. Therefore, the only edges that can be in the cycle is the edges built from cycle decomposition of $h_i$ as we described above. But those edges clearly cannot create a cycle. Therefore, the resulting graph must be forest and can only have at most $n -1$ edges. 

Now if $G = A_n$ or $G = S_n$ and $G_i \neq A_n \forall i$, then $Gr_o$ already has $n - 2$ edges. So one can only add at most one edge from the cycle decomposition of some $h_i$. Hence, $h_i$ can be product of at most $2$ transpositions, and one of those transpositions has already appeared in the graph $G_o$. In the case $G = A_n$, $h_j$, therefore, must be product of exactly $2$ transpositions.

If $G = S_n$ and $G_1 \neq A_n$, then there are already $n-3$ edges corresponding to $h_i$ with $i \neq 1$ in $G_o$. If we add $k$ transpositions from $h_1$'s decomposition, then by a similar argument as above, the new graph is still a forest, and $k + n -3 \leq n -1$ or $k \leq 2$. Since $h_1$ is an odd permutation, $k$ must be $1$, and, therefore, $h_1$ is a transpositions. Similarly, for $i \neq 1$, $h_i$ must be product of $\leq 3$ transposition. Because $h_i$, $i \neq 1$, is an even permutation, it must be the product of exactly $2$ transpositions.
\end{proof}
\begin{thm}\label{thm3}
All irredundant generating sets of length $n - 2$ in $A_n$, $H = \{h_1, h_2,\dotsc, h_{n-2}\}$, has the form $\{sg_1^{e_1}, sg_2^{e_2},\dotsc,sg_{n-2}^{e_{n-2}}, e_i \in \{1, - 1\}\}$, where $s, g_i$ are transpositions such that the graph associated with these transpositions is a tree. 
\end{thm}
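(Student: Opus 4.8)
The plan is to combine the graph-theoretic structure already established (the forest $Gr_o$ has $n-2$ edges and each $h_i$ is a product of exactly two transpositions, one of which is $E_1(h_i)$) with the observation that the edges $E_1(h_i)$ must themselves span a tree, and then to pin down the second transposition $E_2(h_i)$ by exploiting irredundancy. First I would record that, since $Gr_o$ has $n-2$ edges on $n$ vertices and is a forest, it is a forest with exactly two components; I then want to argue it is actually connected after one adjusts the labelling — more precisely, that the union of all the transpositions appearing in the $h_i$ (both $E_1$ and $E_2$ parts) generates $A_n$, hence the associated graph on those transpositions must be connected, and in fact a tree once we see it has the right edge count.

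The core of the argument is to show that one can write $h_i = s\, g_i^{e_i}$ with a single common transposition $s$. The idea: let $t_i = E_1(h_i)$ (viewed as a transposition) and $u_i = E_2(h_i)$, so $h_i = t_i u_i$ or $u_i t_i$ (these commute or not depending on whether the edges share a vertex; a product of two transpositions is a $3$-cycle if they share a vertex and a double transposition otherwise). I would first treat the case where $Gr_o$, the forest of the $t_i$'s, is a path or more generally analyze which $h_i$ are $3$-cycles versus $(2,2)$-elements. Irredundancy forces that removing any $h_j$ drops the group to an intransitive $G_j$; the edge $E_1(h_j)$ is by construction a bridge of $Gr_o$ separating its two endpoints into different $G_j$-orbits. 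The key step is to show all the $E_2(h_i)$ coincide with a single edge, call it $s$: if two of them were distinct edges $s, s'$, I would build an element of $\langle h_i : i \ne j\rangle$ for a suitable $j$ that still moves across the bridge $E_1(h_j)$, contradicting intransitivity of $G_j$. This is where one leans on the fact, established in the lemmas preceding the theorem, that one transposition of each $h_i$ already lies in $Gr_o$ and the other does not create a cycle.

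Once the common transposition $s$ is isolated, the remaining transpositions $g_i := $ (the non-$s$ transposition of $h_i$) satisfy $h_i = s g_i$ or $g_i s$; writing $e_i = 1$ or $-1$ absorbs the two orders since $s g_i s^{-1} = $ a conjugate and $(sg_i)^{-1} = g_i s$, and more directly $g_i s = s (s g_i s)$, so after relabelling $g_i \mapsto s g_i s$ we may always write $h_i = s g_i^{e_i}$ with $e_i \in \{1,-1\}$ — here $g_i^{-1} = g_i$ as transpositions, so the exponent is genuinely recording orientation/order data, and I would phrase this carefully. Finally, the graph whose edges are $\{s\} \cup \{g_i\}$ has $n-1$ edges (the $n-2$ edges $g_i$ together with $s$), lies on $n$ vertices, must be connected because $\{h_1,\dots,h_{n-2}\}$ generates $A_n$ (a transposition graph of a generating set of a transitive-enough group is connected), and a connected graph on $n$ vertices with $n-1$ edges is a tree. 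Conversely I would check that any such configuration gives an irredundant generating set, which is essentially Cameron–Cara's argument adapted: removing $h_j$ leaves transpositions whose graph is the tree minus the edge $g_j$, which is disconnected, so the group generated is intransitive, hence proper.

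The main obstacle I expect is the isolation of the common transposition $s$: a priori each $h_i$ could use a different "second" transposition, and ruling this out requires carefully tracking how orbits of the various $G_j$ interact with the forest structure — essentially showing that if the $E_2$-edges were not all equal, one could route a path across some bridge $E_1(h_j)$ using only the other generators. Handling the $3$-cycle versus double-transposition dichotomy uniformly (and the bookkeeping with $e_i$ to reconcile $sg_i$ with $g_i s$) is the fiddly part that must be done with care rather than hand-waved.
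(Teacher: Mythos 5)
Your overall strategy (work with the forest $Gr_o$, use that each $h_i$ is a product of two transpositions with $E_1(h_i)$ a bridge separating $G_i$-orbits, force a common transposition $s$, then count edges to get a tree) is the same as the paper's, but the step you identify as the core of the argument is formulated in a way that cannot work. You propose to show that the second transpositions $E_2(h_i)$ \emph{literally} all coincide, deriving a contradiction whenever two of them are distinct. That statement is false: take the standard set $h_i=(1,2)(2,i+2)$ and replace $h_1$ by $h_1^{-1}$. The result is still an irredundant generating set of length $n-2$, but with the decompositions handed to you by the preceding lemmas one may perfectly well have $E_2(h_1^{-1})=(2,3)$ while $E_2(h_i)=(1,2)$ for $i\geq 2$; no element crossing a bridge can be manufactured, because the set is genuinely of the theorem's form after re-choosing the decomposition (equivalently, after inverting $h_1^{-1}$). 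This is exactly why the theorem carries the exponents $e_i$ and why the paper's proof does not argue by pure contradiction: it adds two distinct edges $E_2(h_i),E_2(h_j)$ to the forest, studies the resulting cycle $C(i,j)$, and splits into two cases. When $E_1(h_i)$ and $E_2(h_i)$ are disjoint (the $4$-cycle configuration) a contradiction is obtained, but only by bringing in a \emph{third} generator $h_k$ and showing $E_2(h_k)$ has nowhere to go; when they share a vertex (the $3$-cycle configuration) there is no contradiction at all, and the proof instead replaces $h_i$, and possibly other $h_k$, by $h_k^2=h_k^{-1}$ to realign all second edges onto a single edge.

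Your $e_i$ bookkeeping only absorbs the order ambiguity $sg_i$ versus $g_is$ at the very end; it does not supply the realignment mechanism above, which is where the actual content of the proof lies (for a $3$-cycle with $E_1(h_i)$ fixed, the ``other'' transposition is itself not unique, and choosing it is the same as choosing between $h_i$ and $h_i^{-1}$). So the dichotomy disjoint-edges-versus-shared-vertex, the use of a third index $k$ in the disjoint case, and the inversion step in the shared case are all missing, and the contradiction you plan to reach in general does not exist. The surrounding material in your sketch (the forest has two components, adding $s$ gives a tree, the converse verification) is fine and consistent with the paper.
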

\begin{proof}
Now start with the forest (original graph) $Gr_o$ associated to $G = A_n$ that is defined above. If all $E_2(h_i)$ are the same and are some edge $E$, then we can add $E$ to $G_o$, and the resulting graph is still a forest, and, therefore, is a tree. Therefore the irredundant generating set $H = \{h_i\}$ has the form as desired. 

Now for some $j \neq i$ such that $E_2(h_i) \neq E_2(h_j)$, add $E_2(h_i)$ and $E_2(h_j)$ to the forest $Gr_o$. Then we will get $n$ edges in the resulting graph, and there must be some cycle $C(i, j)$ in this new graph. Notice that $E_1(h_k)$ for $k \neq i, j$ cannot be in this cycle, so the cycle has $\leq 4$ edges. Also, note that the cycle must has at least $3$ edges so it must contains both $E_1(h_t)$ and $E_2(h_t)$ for $t = i$ or $t = j$. WLOG, suppose that the cycle $C(i, j)$ contains $E_1(h_i)$ and $E_2(h_i)$, and has $4$ vertices $1, 2, 3, 4$. Now we have $2$ following cases :

$\mathbf{Case\ 1}$: $E_1(h_i)$ and $E_2(h_i)$ share no common vertex. Then the cycle must have at least $4$ edges and therefore have exactly $4$ edges $E_k(h_t), k = \overline{1, 2}, t \in \{i, j\}$. WLOG, suppose that $E_1(h_i) = (1, 2), E_2(h_i) = (3, 4), E_1(h_j) = (2, 3)$ and $E_2(h_j)= (1, 4)$. Then $3$ and $4$ must be in different $G_i$-orbits while $1$ and $4$ must be in different $G_j$-orbit . 

Clearly, $E_2(h_k) \neq E_2(h_i)$, for $k \neq j$, so we can consider the cycle $C(i, k)$ instead. $C(i, j)$ must contain $E_2(h_k)$. This is impossible because no two vertices in $\{1, 2, 3, 4\}$ can be in the same $G_i$ or $G_j$-orbits and cannot be connected by an edge correponding to $h_k$ with $k \neq i, j$.

\vspace{5mm}
{\centering
\includegraphics[scale = 0.45]{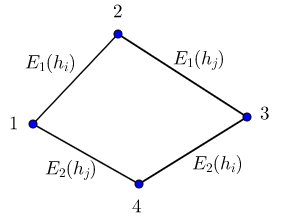}
\par}
\vspace{5mm}

$\mathbf{Case\ 2}$: If two edges of $h_i$ share a common vertex. Again, WLOG, suppose $E_1(h_i) = (1, 2)$, and $E_2(h_i) = (1, 3)$. Hence $1$ and $3$ are not in the same $G_i$-orbit. For any $k \neq i, j$, we again consider the cycle $C(i, k)$. $1$, $2$, and $3$ are in different $G_i$-orbits, so $C(i, k)$ must contain both $E_1(h_i)$ and $E_2(h_i)$. Therefore, this cycle has only two possible forms:

\begin{center}
\includegraphics[scale=0.45]{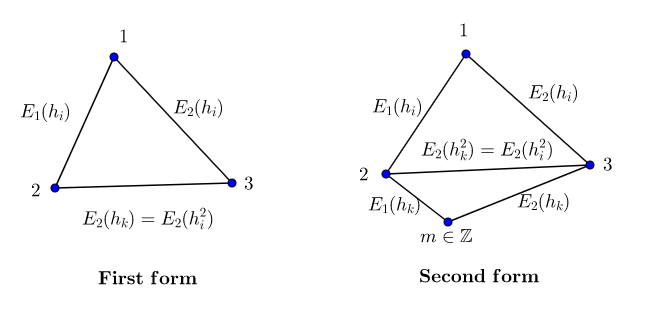}
\end{center}

First, we will change $h_i$ to $h_i' = h_i^2$ so that $E_2(h_i') = (2, 3)$. In the second form, if any, we can change $h_k$ to $h_k' = h_k^2$ so that all edges $E_2(h_t')$ with the new $h_t' = h_t$ or $h_t^{2}$ are the same and are the edge $(2, 3)$. As a result, $\{h_j'\}$ and $\{h_j\}$ has the form indicated.
\end{proof}

\begin{rmk}
Sophie Le found that \cref{thm3} is also true for $A_7$ and $A_8$. It is easy to see that \cref{thm3} is true for $A_4$ and $A_3$, and therefore \cref{thm3} is true for all $n \neq 5, 6 \geq 3$.
\end{rmk}

\begin{lem}\label{lm9}
Let $\{X, Y\}$ be a partition of $\Sigma = \{1,2,..,n\}$. Suppose that $G = (S_X \times S_Y) \cap A_n$. Then either $G$ is maximal subgroup of $A_n$ or $|X| = |Y| = \frac{n}{2}$ and there is an unique maximal subgroup $M = (S_{\frac{n}{2}} \wr S_2) \cap A_n$ that contains $G$.
\end{lem}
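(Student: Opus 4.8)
The statement is an assertion about the subgroup lattice of $A_n$ above $G$, so I will prove it by a direct permutation‑group analysis of the subgroups lying between $G$ and $A_n$. Write $m=|X|$, so $|Y|=n-m$, and assume without loss of generality $m\le n-m$. Two ranges are handled separately up front: first, the case $m=1$, where $G$ is a point stabiliser of $A_n$, which is maximal because $A_n$ is primitive on $n$ points (for $n\ge 3$); and second, the small values $n\le 4$, where $G$ is either trivial, a point stabiliser, or the Klein four‑group $(S_2\wr S_2)\cap A_4=V$, and the assertion is checked by hand (for $n=4$, $m=2$: $G=\{1,(12)(34)\}$ is contained only in $V$ and $A_4$, and $V$ is maximal). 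So assume $2\le m\le n-m$, hence $|Y|\ge 3$.

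\textbf{Structure of $G$ and first reduction.} I would first record that, since the sign character is onto $S_X\times S_Y$, we have $A_X\times A_Y\le G$ with index $2$, the projection of $G$ to $\mathrm{Sym}(X)$ is all of $S_X$ (because $|Y|\ge 2$) and the projection to $\mathrm{Sym}(Y)$ is all of $S_Y$ (because $m\ge 2$), and these projections have kernels $\{1\}\times A_Y$ and $A_X\times\{1\}$. Consequently $G$ has exactly the two orbits $X,Y$, so $G$ is intransitive and proper, and $G$ contains $3$‑cycles (the identity on $X$ times a $3$‑cycle of $Y$, since $|Y|\ge 3$). Now let $H$ be any subgroup with $G<H\le A_n$. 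Because $H\supseteq G$, every $H$‑orbit is a union of $G$‑orbits, so $H$ is either intransitive with orbit set $\{X,Y\}$ or transitive. In the intransitive case $H$ fixes $X$ setwise, so $H\le (S_X\times S_Y)\cap A_n=G$, contradicting $G<H$; hence $H$ is transitive.

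\textbf{Blocks of $H$.} Next suppose $H$ is imprimitive, with a nontrivial block system $\mathcal B$; then $\mathcal B$ is $G$‑invariant. Intersecting the parts of $\mathcal B$ with $X$ yields a $G$‑invariant partition of $X$, which is trivial because $G$ induces $S_X$ on $X$ and $S_X$ is primitive on $X$ (for $|X|\ge 2$); similarly the trace of $\mathcal B$ on $Y$ is trivial. A short case analysis of the (at most) two possibilities for each trace then shows that either $\mathcal B=\{X,Y\}$ (which forces $m=n-m$, as blocks are equipotent) or every block meets $X$ and $Y$ in at most one point, which forces all blocks to have size $2$, $m=n-m$, and $\mathcal B$ to be a perfect matching between $X$ and $Y$. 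The matching option is impossible when $m\ge 3$: a $3$‑cycle of $X$ (times the identity on $Y$) lies in $G$, hence permutes $\mathcal B$, but it fixes the $Y$‑point of a block while moving its $X$‑point — absurd. Therefore, if $m\ne n-m$ then $H$ must be primitive; and if $m=n-m=n/2$ then either $H$ is primitive, or $H$ stabilises the partition $\{X,Y\}$ (or, only when $n=4$, one of the two transversals of $\{X,Y\}$, each of which has the same stabiliser $(S_X\wr S_2)\cap A_n$ in $A_n$), so in all imprimitive cases $H\le M:=(S_X\wr S_2)\cap A_n$. Finally, when $H$ is primitive it contains a $3$‑cycle inherited from $G$, so by the classical theorem of Jordan (a primitive group of degree $n$ containing a $3$‑cycle contains $A_n$) we get $H=A_n$. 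Thus: if $m\ne n-m$, every subgroup strictly between $G$ and $A_n$ equals $A_n$, so $G$ is maximal; if $m=n-m=n/2$, every such subgroup is $A_n$ or is contained in $M$. In the latter case $G<M<A_n$ (the indices being $2$ and $\tfrac12\binom{n}{n/2}>1$), so $G$ is not maximal, $M$ is a maximal overgroup, and re‑running the same dichotomy for subgroups above $M$ shows that any maximal subgroup of $A_n$ containing $G$ must be transitive and imprimitive, hence contained in $M$, hence equal to $M$; so $M$ is the unique maximal subgroup of $A_n$ containing $G$.

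\textbf{Main obstacle.} The crux is the middle step: proving that a transitive proper overgroup of $G$ in $A_n$ is either all of $A_n$ or sits inside the partition stabiliser $M$. This rests on the block‑system classification, where the genuinely delicate points are the degenerate behaviour at $n=4$ and at small parts, and on the invocation of Jordan's theorem for primitive groups containing a $3$‑cycle; the remaining ingredients (sign computations, orbit bookkeeping, the index estimates showing $G<M<A_n$) are routine.
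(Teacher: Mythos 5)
Your proposal is correct, but it proves the lemma by a genuinely different route than the paper. The paper's proof is element-by-element: it takes an arbitrary $h\in A_n\setminus G$, writes its strong $M$-decomposition $h=M\cdot N$ from \cref{lm4} (so $N$ is an alternating $X$--$Y$ cycle), and shows by explicit multiplication and conjugation that if $N$ misses even one point of $X$ then $\langle G,h\rangle$ contains a $3$-cycle such as $(y_1,x_1,x)$ and hence equals $A_n$; consequently a proper maximal overgroup of $G$ can contain only elements whose alternating cycle covers all of $\Sigma$, which forces $|X|=|Y|$ and pins that overgroup inside $(S_{\frac{n}{2}}\wr S_2)\cap A_n$. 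You instead classify overgroups structurally: any $H$ with $G<H\le A_n$ is transitive (its orbits are unions of $X$ and $Y$), its block systems induce $S_X$- and $S_Y$-invariant partitions on the two parts and are therefore trivial there, so a proper transitive overgroup is either contained in the partition stabiliser $(S_{\frac{n}{2}}\wr S_2)\cap A_n$ (possible only when $|X|=|Y|$) or primitive, and the primitive case is eliminated by Jordan's theorem because $G$ contains a $3$-cycle. Both arguments are sound. Yours is more conceptual, yields the maximality and uniqueness statements cleanly from a single dichotomy, and treats the degenerate cases ($|X|=1$, $n\le 4$) explicitly, at the cost of importing Jordan's theorem as an outside classical result; the paper stays self-contained within its $M$-decomposition machinery (the same style of computation as \cref{lm5}), though its final step from the $3$-cycle to $F=A_n$ is left terse. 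One small point in your sketch: the two ``mixed'' trace cases (one part inside a single block, the other met only in singletons) need to be ruled out explicitly via the equal-block-size argument, but that is the routine step you allude to and does not affect correctness.
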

\begin{proof}
Suppose we have an element $h \in A_n \setminus G$. 

Write the $M$-decomposition of $h$: $h = M.N$, where $M \in S_X \times S_Y$, and $N = (x_1, y_1, \dotsc, x_k, y_k)$ with $x_i \in X, y_i \in Y$.

Let $F$ be the subgroup generated by $h$ and $G$. Suppose that there is some $x \in X$ that not in the cycle $N$ of $h$. Let $\sigma \in S_X$ be the permutation that swaps $x_1$ and $x$. Because $h \in F$, $h' = (ab). N \in F$, where $(ab)$ is some transpostion in $S_X$. We have $N.(a, b)(a, b).N = N^2 = (x_1, \dotsc, x_k)(y_1, \dotsc, y_k) \in G$, so $N.(a, b)$ is also in $F$. 

Because $h \in F$, and $\sigma^{-1}(a, b) \in G$, $M.\sigma^{-1} \sigma N(\sigma^{-1}(a, b)) \in F$. Hence $\sigma N\sigma^{-1}(a, b) = (x, y_1, \dotsc, x_k, y_k)(a, b) \in F$ since $M.\sigma^{-1}$ is in $G = (S_X \times S_Y) \cap A_n$. But we also have $N.(a, b) \in F$, so $(a, b).N^{-1} \in F$. 

Therefore, $(x, y_1, \dotsc, x_k, y_k)(a, b)(a, b)(y_k, x_k, \dotsc, y_1, x_1)  = (y_1, x_1, x) \in F$. As a result, $A_n = F$.

Consequently, if $G$ is not maximal, and $F_0$ is some maximal subgroup containing $G$, then $F_0$ can only contain elements of the form $M(x_1, y_1, \dotsc, x_k, y_k)$, where $M \in S_X \times S_Y$, and $\{x_i, y_i\}_{i = 1}^k$ is exactly the set $\Sigma$. Hence, $F_0$ must be $(S_{\frac{n}{2}} \wr S_2) \cap A_n$. Moreover, because of the same reason, $F_0 = (S_{\frac{n}{2}} \wr S_2) \cap A_n$ must also be a maximal subgroup of $A_n$.
\end{proof}

\begin{thm}
Let $\{h_1, h_2,\dotsc,h_{n-2}\}$ be any irredundant generating set in $A_n$ of length $n-2$ ($n \neq 5, 6 \geq 3$). There is an unique collection of maximal subgroups $M_1, M_2,\dotsc, M_{n-2}$ that are certified by $h_i$. In other word, $A_n$ satisfies the uniqueness property
\end{thm}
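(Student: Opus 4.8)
The plan is to feed the explicit description of length-$(n-2)$ irredundant generating sets from \cref{thm3} into \cref{lm9}. Fix such a set $H=\{h_1,\dots,h_{n-2}\}$; by \cref{thm3} and the remark extending it to all $n\neq 5,6$, we may write $h_i=sg_i^{e_i}$, where $s,g_1,\dots,g_{n-2}$ are $n-1$ transpositions whose edge set is a spanning tree $T$ of $\Sigma=\{1,\dots,n\}$. Put $G_i=\langle h_j:j\neq i\rangle$; this is a proper subgroup since $H$ is irredundant, and $\langle G_i,h_i\rangle=A_n$, so a maximal subgroup $M$ is certified by $h_i$ exactly when $G_i\le M$ (and then automatically $h_i\notin M$). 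Hence the theorem is precisely the assertion that, for each $i$, the subgroup $G_i$ is contained in a unique maximal subgroup of $A_n$, which then may be named $M_i$.

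The first step is to identify $G_i$. Deleting the edge $g_i$ from $T$ leaves two subtrees; let $X_i,Y_i$ be their vertex sets, so $\{X_i,Y_i\}$ partitions $\Sigma$, and arrange the names so that the edge $s$ lies in the subtree on $X_i$ (forcing $|X_i|\ge 2$). Every surviving edge of $T$ is contained in $X_i$ or in $Y_i$, so each $h_j$ with $j\neq i$ lies in $K_i:=(S_{X_i}\times S_{Y_i})\cap A_n$, whence $G_i\le K_i$. I claim $G_i=K_i$. For the generators $h_j$ with $g_j\subseteq X_i$ one has $h_j=sg_j$, and since $s(sg_j)s^{-1}=(sg_j)^{-1}$ the subgroup they generate is normalized by $s$, hence is normal in the group generated by $s$ together with the tree edges inside $X_i$, which is $S_{X_i}$; it lies in $A_{X_i}$ and, taking $g_j$ adjacent to $s$ in the subtree, contains a $3$-cycle, so it equals $A_{X_i}$. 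For the generators $h_j$ with $g_j\subseteq Y_i$ one has $h_jh_k=g_jg_k$, because $s$ commutes with the transpositions inside $Y_i$, and the same argument applied to $\langle g_jg_k\rangle$ yields $A_{Y_i}\le G_i$. Thus $A_{X_i}\times A_{Y_i}\le G_i\le K_i$; when $|X_i|,|Y_i|\ge 2$ this chain has index $2$, and $G_i$ contains a generator $h_j=sg_j$ with $g_j\subseteq Y_i$ acting as an odd permutation on each of $X_i$ and $Y_i$, hence lying outside $A_{X_i}\times A_{Y_i}$, so $G_i=K_i$; when $|Y_i|=1$ the same computation gives $G_i=A_{X_i}=K_i$ directly. (The small values $n=3,4$ are checked by hand.)

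With $G_i=K_i$ in hand, \cref{lm9} finishes the argument: either $K_i$ is itself a maximal subgroup of $A_n$, in which case the unique maximal subgroup containing $G_i$ is $K_i$; or $|X_i|=|Y_i|=n/2$, and then by the conclusion of \cref{lm9} (whose proof shows that any element of $A_n$ outside $K_i$ generates $A_n$ together with $K_i$ unless it preserves the block system $\{X_i,Y_i\}$) the only maximal subgroup of $A_n$ containing $K_i$ is $(S_{n/2}\wr S_2)\cap A_n$. In either case this unique maximal subgroup is $M_i$, and $M_1,\dots,M_{n-2}$ is the unique collection of maximal subgroups certified by $h_1,\dots,h_{n-2}$, i.e.\ $A_n$ has the uniqueness property.

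The crux, and the main obstacle, is the equality $G_i=K_i$ rather than merely $G_i=A_{X_i}\times A_{Y_i}$: the subgroup $A_{X_i}\times A_{Y_i}$ need not lie in a unique maximal subgroup — when $|X_i|=|Y_i|$ it sits inside both $K_i$ and $(S_{n/2}\wr S_2)\cap A_n$ — so without this refinement the uniqueness statement would simply be false. The equality rests on the existence of at least one ``diagonal'' generator $sg_j$ with $g_j\subseteq Y_i$, which is exactly what the precise tree structure from \cref{thm3} guarantees; in addition, the degenerate shapes of the two subtrees (a pendant edge, or a subtree consisting of the single edge $s$) each need a short separate verification, and one must take a little care when $|X_i|$ or $|Y_i|$ equals $3$ or $4$, where the lattice of normal subgroups of the symmetric group is larger.
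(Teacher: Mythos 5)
Your proposal is correct and takes essentially the same route as the paper: reduce via \cref{thm3} to the tree form $h_i = ss_i$, identify $G_i=\genby{h_j : j\neq i}$ as $(S_{X_i}\times S_{Y_i})\cap A_n$ for the two components of the tree minus the edge of $s_i$, and invoke \cref{lm9} to get a unique maximal overgroup, hence the uniqueness property. The only difference is that you actually verify the equality $G_i=(S_{X_i}\times S_{Y_i})\cap A_n$ (correctly), whereas the paper asserts it without proof.
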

\begin{proof}
By \cref{thm3}, we can assume that $h_i = ss_i$, with $i = \overline{1, n-2}$ so that the edge $E(s) = E_2(h_i)$ and $E(s_i) = E_1(h_i)$ corresponding to transpositions $s$ and $s_i$ create a tree $T$ on the set of vertices $\Sigma = \{1,2,\dotsc,n\}$. 

Again, if $G_i$ be the subgroup generated by $\{h_j\}_{j \neq i}$, then we have $G_i = S_X \times S_Y \cap A_n$, where $X$ and $Y$ are sets of vertices of two connected components of $T \setminus \{E_1(h_i) = E(s_i)\}$. Then by \cref{lm9}, $G_i$ is almost maximal. Therefore, the uniqueness property is satisfied.
\end{proof}

\begin{defi}
A group $G$ with $m = m(G)$ satisfies the replacement property if for any irredundant generating set $H = \{h_1, h_2,\dotsc, h_m\}$ of length $m$ of $G$ and any non-identity element $h \in G$, there exists some $i \in \overline{1, m}$ so that the set $\{h_j\}_{j \neq i} \cup \{h\}$ still generates $G$.
\end{defi}
\begin{thm}\label{thm5}
$A_n$ satisfies the replacement property for all $n > 0$.
\end{thm}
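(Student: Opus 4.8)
The plan is to restrict to irredundant generating sets of the maximal length $n-2$, apply the tree classification of \cref{thm3}, and exploit that each subgroup obtained by omitting one generator is almost maximal in the sense of \cref{lm9}. Small values of $n$ I would handle by hand: $A_1,A_2$ are trivial and the property is vacuous; $A_3\cong C_3$ has prime order, so every non-identity element generates it; in $A_4$ any two distinct maximal subgroups (the unique Klein four-group and the Sylow $3$-subgroups) intersect trivially, which degenerates the dichotomy below into a one-line check; and for $n=5,6$ one inspects the finitely many irredundant generating sets of length $n-2$ directly. So assume $n\ge 7$, where \cref{thm3} is available.

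Fix an irredundant generating set $\{h_1,\dots,h_{n-2}\}$ of $A_n$ and a non-identity $h\in A_n$. By \cref{thm3} write $h_i=sg_i$ with transpositions $s,g_1,\dots,g_{n-2}$ whose associated edges form a tree $T$ on $\{1,\dots,n\}$, and set $G_i=\genby{\{h_j\}_{j\neq i}}$. As recorded in the proof of the uniqueness property above, $G_i=(S_{X_i}\times S_{Y_i})\cap A_n$, where $\{X_i,Y_i\}$ is the partition of $\{1,\dots,n\}$ obtained by deleting the edge $g_i$ from $T$. By \cref{lm9} every $G_i$ is a maximal subgroup of $A_n$, with one possible exception: if deleting some edge $g_{i_0}$ splits $T$ into two halves of size $n/2$, then $G_{i_0}$ lies in the unique maximal subgroup $M=(S_{n/2}\wr S_2)\cap A_n$. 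Since a tree has at most one such balanced edge, at most one of the $G_i$ is non-maximal (and possibly none).

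Next I would show $\bigcap_i G_i=\{1\}$: an element $\sigma$ of this intersection preserves, for each $i$, the partition $\{X_i,Y_i\}$, so if $w$ is any vertex that is not an endpoint of $s$, the first edge of a path from $w$ to $\sigma(w)$ would be some $g_i$ separating them --- impossible --- whence $\sigma$ fixes every such $w$, so $\sigma\in\{1,s\}$, and $s\notin A_n$ forces $\sigma=1$. Since $h\ne 1$, some $G_i$ omits $h$. If one such $i$ has $G_i$ maximal, then $\genby{G_i,h}=A_n$, so $\{h_j\}_{j\ne i}\cup\{h\}$ generates $A_n$ and we are done. Otherwise every $G_i$ omitting $h$ is non-maximal; since there is at most one non-maximal $G_i$, necessarily $h\notin G_{i_0}$ and $h\in G_j$ for all $j\ne i_0$. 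In this case it is enough to check $h\notin M$, for then $\genby{G_{i_0},h}$ strictly contains $G_{i_0}$ and is not contained in $M$, hence equals $A_n$ by the maximality and uniqueness of $M$; so $\{h_j\}_{j\ne i_0}\cup\{h\}$ generates $A_n$.

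The heart of the argument --- and the step I expect to be the main obstacle --- is the claim that $M\cap\bigcap_{j\ne i_0}G_j=\{1\}$; granting it, $h\in M$ would give $h\in M\cap\bigcap_{j\ne i_0}G_j=\{1\}$, contradicting $h\ne1$, so indeed $h\notin M$. An element $\sigma$ of $M\cap\bigcap_{j\ne i_0}G_j$ preserves the partition obtained by deleting every edge of $T$ except $s$ and $g_{i_0}$, so by the argument just used it fixes every vertex that is not an endpoint of $s$ or of $g_{i_0}$; thus $\sigma$ is supported on at most four vertices, at most three of which lie in the half $X_{i_0}$ (the two endpoints of $s$, which lie in $X_{i_0}$, together with the endpoint of $g_{i_0}$ that lies there). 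A balanced edge forces $n$ to be even, so $n\ge 8$, and $\sigma$ fixes at least $n/2-3\ge 1$ points of $X_{i_0}$; hence $\sigma\in M$ cannot interchange $X_{i_0}$ with $Y_{i_0}$ and must stabilize each of them setwise, so $\sigma\in(S_{X_{i_0}}\times S_{Y_{i_0}})\cap A_n=G_{i_0}$, giving $\sigma\in\bigcap_i G_i=\{1\}$. The difficulty is entirely concentrated in this exceptional overgroup $M$: a priori every single-generator replacement by $h$ could land inside $M$, and the point of the last step is that an element lying simultaneously in $M$ and in all $G_j$ with $j\ne i_0$ is confined to a support too small to perform the block swap that distinguishes $M$ from $G_{i_0}$.
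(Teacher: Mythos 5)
Your proposal is correct, and while it rests on the same two pillars as the paper's argument --- the tree classification of \cref{thm3} and the almost-maximality statement of \cref{lm9} --- the middle of the proof runs along a genuinely different track. The paper finds the index to replace by looking at the element $h$ itself: it forms the union of the tree-paths from each moved point $x$ to $h(x)$, notes that any edge $E(s_i)$ occurring there certifies $h\notin G_i$, and then splits into two cases: if at least two edges other than $E(s)$ occur, one of the corresponding $G_i$ is maximal and replacement is immediate; if only one occurs, $h$ is forced to be $h_i^{\pm 1}$ and one replaces $h_i$ by itself. You never analyze the support of $h$: instead you prove $\bigcap_i G_i=\{1\}$ to obtain some $G_i$ omitting $h$, observe that a tree has at most one balanced edge so at most one $G_i$ is non-maximal, and dispose of the exceptional case by the intersection computation $M\cap\bigcap_{j\neq i_0}G_j=\{1\}$, which gives $h\notin M$ and hence $\genby{G_{i_0},h}=A_n$ by the uniqueness of $M$ in \cref{lm9}. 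Your route isolates the only possible obstruction (the unique balanced edge and its overgroup $M$) very cleanly, and your ``at most one balanced edge'' observation also tightens the justification in the paper's first case, where two non-maximal $G_i,G_j$ would a priori sit inside two \emph{different} copies of $(S_{n/2}\wr S_2)\cap A_n$ rather than a single one; the paper's route, in exchange, makes the exceptional case collapse instantly (the element is $h_{i_0}^{\pm1}$), so no extra intersection lemma is needed. Two minor caveats: like the paper, you use the identification $G_i=(S_{X_i}\times S_{Y_i})\cap A_n$ established in the proof of the uniqueness theorem, so that step should be cited rather than treated as free; and your treatment of $n=5,6$ (where \cref{thm3} is unavailable) is still an unavoidable finite verification, which the paper carries out with GAP for all $n\le 8$, so nothing is gained or lost there.
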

\begin{proof}
Using GAP, G. Frieden and Sophie Le found that $A_n$ satisfies replacement property for $n = \overline{1, 8}$. So we just need to consider the case when $n \geq 9$.

Again, by \cref{thm3}, we can consider an irredundant generating set of length $n-2$ of the form $H = \{h_i = ss_i, \ i = \overline{1, n-2}\}$ so that edges $E(s)$ and $E(s_i)$ corresponding to transpositions $s$ and $s_i$ create a tree $T$ on the set of vertices $\Sigma = \{1,2,\dotsc,n\}$. 

Consider any element $h \in A_n$. Then there are at least $2$ pair of elements in $\Sigma$, $(x, y)$ and $(z, t)$, such that $h$ maps $x$ to $y$, and maps $z$ to $t$. Consider the paths $L_i$ in $T$ that connect $x_i$ to $y_i$ for all distinct pairs $(x_i, y_i)$ such that $h$ maps $x_i$ to $y_i$. 

Suppose that there exists two edges $E(s_i)$ and $E(s_j)$ with $s_i, s_j \neq s$ in $\cup_{i =1}^{k}L_i\ (k \geq 2)$. Then $h \not\in G_i$ and $h \not\in G_j$. By \cref{lm9}, either $G_i$ or $G_j$ must be maximal because $(S_{\frac{n}{2}} \wr S_2) \cap A_n$ cannot contain $G_i \cup G_j = A_n$. WLOG, suppose that $G_i$ is a maximal subgroup. Then we can replace $h_i$ by $h$.

Otherwise, if $\cup_{i =1}^{k} L_i= \{E(s_i), E(s)\}$ for some $i$, then $k = 2$, and $h$ must be $ss_i$, and we can replace $h_i$ by itself.
\end{proof}

\section{Properties of irredundant generating set of arbitrary length in the symmetric group}

\subsection{Classification of irredundant generating sets of length $n - 2$ of symmetric group}

Cameron and Cara classified all irredundant generating sets of length $n - 1$ in $S_n$. Now we will classify all irredundant generating sets of length $n - 2$ in $S_n$. 
\begin{defi}
Let $\Sigma = \{1,2,\dotsc,n\}$. For a subset $S \subset S_n$ consisting of transpositions, we define $Gr_S$ to be the graph on the set of vertices $\Sigma$ such that $x$ and $y$ is connected to by an edge $E(s)$ if there exists some transposition $s \in S$ such that $s = (xy)$. 
\end{defi}
\begin{thm}
(Classification of irredundant generating sets of length $n-2$ for $S_n$ with $n > 24$)

Let $H = \{h_1, h_2,\dotsc, h_{n-2}\}$ be an irredundant generating set of length $n-2$ of $S_n$. Then we can change some of the element $h_i$ of $H$ to $h_i^{-1}$ so that the resulting set has one of the following types: (Here $s$, $t$, $r$, and $s_i$ are always distinct transpositions)

1. $H = \{ss_1,\dotsc, ss_k, s_{k+1},\dotsc, s_{n-2} \}$ with $1 \leq k < n - 2$ such that $Gr_{\{s, s_i\}}$ is a tree.

2. $H = \{s_1s_2, s_3s_4, s_5,\dotsc, s_n\}$ such that $Gr_{\{s, s_i\}}$ is connected and contains exactly $1$ cycle $(E(s_1), E(s_3), E(s_2), E(s_4))$.

3. $H = \{ts_1,\dotsc, ts_k, s, ss_{k+1},\dotsc, ss_{n-3},\}$ with $1 \leq k \leq n - 3$ such that $Gr_{\{t, s, s_i\}}$ is a tree.

4. $H = \{s_1s_2, s_3s_4, s, ss_5,\dotsc, ss_{n-1}\}$ such that $Gr_{\{s, s_i\}}$ is connected and contains exactly $1$ cycle $(E(s_1), E(s_3), E(s_2), E(s_4))$.

5. $H = \{s_1s_2, ts_3,\dotsc,ts_k, s, ss_{k+1},\dotsc,ss_{n-2}\}$ with $3 \leq k \leq n - 2$ such that $Gr_{\{s, t, s_i\}}$ is connected, and contains exactly $1$ cycle $(E(s_1),$ $ E(s), E(s_2), E(t))$.

6. $H = \{s_1s_2, s_3s_4, s, ss_5,\dotsc, ss_{n-1}\}$ such that $Gr_{\{s, s_i\}}$ is connected and contains exactly $1$ cycle $(E(s_1), E(s_3), E(s_2), E(s_4), E(s))$.

7. $H = \{ts_1,\dotsc,ts_k, rs_{k+1},\dotsc,rs_l, s, ss_{l+1},\dotsc,ss_{n-3}\}$ with $1 \leq k < l \leq n-3$ such that $Gr_{\{s, t, r, s_i\}}$ is connected and contains exactly $1$ cycle $(E(s), E(t), E(r))$.
\end{thm}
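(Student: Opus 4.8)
The plan is to run the graph--theoretic machinery behind \cref{thm3} in the wider setting of $S_n$, the two new phenomena being that single transpositions may occur among the $h_i$ and that one of the subgroups $G_i=\langle h_j:j\ne i\rangle$ may equal $A_n$. For each $i$ the set $\{h_j\}_{j\ne i}$ is an irredundant generating set of $G_i$ of size $n-3$, so $m(G_i)\ge n-3$, and since $n>24$, \cref{thm2} forces $G_i$ to be $A_n$ or intransitive. By the structural lemmas preceding \cref{thm3} we are therefore in one of two cases: \textbf{(A)} exactly one $G_i=A_n$, say $i=1$, so that $h_1$ is a transposition, the original forest $Gr_o$ has $n-3$ edges, and each $h_i$ with $i\ge 2$ is a product of exactly two transpositions one of which, $E_1(h_i)$, is an edge of $Gr_o$; or \textbf{(B)} every $G_i\ne A_n$, so that $Gr_o$ has $n-2$ edges and each $h_i$ is a product of at most two transpositions, one of which, $E_1(h_i)$, lies in $Gr_o$. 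Let $Gr$ be the graph on $\Sigma$ whose edges are all the transpositions occurring in the decompositions of the $h_i$ (including $h_1$ in case (A)); since a set of transpositions generates $S_n$ exactly when its graph is connected, $Gr$ is connected. Two facts from the proof of \cref{thm3} transfer unchanged: first, $Gr_o$ together with the transposition edges of any single $h_i$ is still a forest, so a cycle of $Gr$ can only be created by the interaction of two or more of the second edges $E_2(h_i)$ (and, in case (A), of the edge $E_1(h_1)$); second, the two endpoints of any edge $E(\tau)$ with $\tau$ a transposition of $h_l$ lie in a common $G_k$-orbit for every $k\ne l$, so---since the endpoints of $E_1(h_k)$ lie in distinct $G_k$-orbits whenever $G_k\ne A_n$---no cycle of $Gr$ can pass through $E_1(h_k)$ for such $k$. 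Hence cycles of $Gr$ only use edges coming from boundedly many of the $h_i$, with the edge $E_1(h_1)$ in case (A) being the one exception that is allowed to lie on a cycle.

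The enumeration then proceeds as for \cref{thm3}. If all the second edges $E_2(h_i)$ coincide with a single edge $E(s)$ (and, in case (A), $E_1(h_1)$ sits so that the total graph stays acyclic), adjoining $E(s)$ to $Gr_o$ keeps it a forest, which is a tree once every edge has been added; reading off the resulting transpositions yields type 1 in case (B) and type 3 in case (A), where the standalone transposition $h_1=s$ supplies the second pivot $t$. Otherwise pick $i\ne j$ with $E_2(h_i)\ne E_2(h_j)$; adjoining these two edges (and $E_1(h_1)$ in case (A)) to $Gr_o$ produces a cycle $C$ whose edges all lie in $\{E_1(h_i),E_2(h_i),E_1(h_j),E_2(h_j)\}$ together with $E_1(h_1)$, hence of length $3$, $4$, or $5$. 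Splitting on which of these edges actually lie on $C$, on whether the two distinguished edges of a single $h_i$ share a vertex, on whether $C$ runs through $E_1(h_1)$, and---after normalizing suitable $h_i\mapsto h_i^{-1}$ exactly as in the proof of \cref{thm3}, so that all second edges off $C$ become equal---on how many transpositions end up shared among several $h_i$ (the pivots $s$, and possibly $t$, $r$), one obtains the remaining types: a $4$-cycle through no pivot with every other element a transposition is type 2; a $4$-cycle through one pivot is type 4; a $4$-cycle through two pivots is type 5; a $5$-cycle through a pivot is type 6; and a $3$-cycle formed of three mutually shared transpositions---which in case (A) is precisely what arises when $E_1(h_1)$ closes a triangle with two $E_2$-edges---is type 7. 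At each step irredundancy rules out the degenerate possibilities (an $E_2(h_i)$ already lying in $Gr_o$ would make $h_i$ redundant; a cycle through a foreign $E_1(h_k)$ is excluded above), and an analysis of the subgroups $G_i$ in the spirit of \cref{lm9} confirms that each configuration listed is genuinely irredundant.

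The hard part is the exhaustiveness of this last step: one must verify that every way of attaching the second edges (and $E_1(h_1)$) to the forest $Gr_o$ while keeping $Gr$ connected with at most one short cycle falls into exactly one of the seven patterns, and that the permitted normalizations $h_i\mapsto h_i^{-1}$ suffice to bring an arbitrary such generating set into the stated normal form without disturbing irredundancy. The interface between cases (A) and (B) is the delicate point: in case (A) the extra edge $E_1(h_1)$ can lengthen a cycle to $5$ or collapse it to the pivot triangle of type 7, so the case-(A) analysis, which covers types 3--7, carries most of the combinatorial weight.
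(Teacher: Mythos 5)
Your setup is the paper's: the same dichotomy (every $G_i\neq A_n$ versus exactly one $G_i=A_n$, forced by \cref{thm2} since $n>24$), the same forest $Gr_o$ with distinguished edges $E_1(h_i),E_2(h_i)$, the same device of adjoining second edges to create a short cycle $C(i,j)$, and the same normalization $h_i\mapsto h_i^{-1}$ as in \cref{thm3}. But what you have written is a plan, not a proof: the entire content of the theorem is the exhaustive case analysis that you explicitly defer (``the hard part is the exhaustiveness of this last step''). In the paper that analysis is the bulk of the argument: in the case $G_i\neq A_n$ for all $i$ one must split according to whether $E_1(h_i)$ and $E_2(h_i)$ share a vertex (giving types $1$ and $2$), and in the case $G_1=A_n$ one must further stratify by the set $M$ of those $h_i$ for which neither $h_i$ nor $h_i^{-1}$ has $h_1$ as its second transposition: $|M|=1$ gives type $3$, and for $|M|\geq 2$ one enumerates the possible shapes of $C(i,j)$ (four forms when some $h_i\in M$ has disjoint edges with $E_2(h_i)$ joining distinct $G_i$-orbits, further forms when the edges share a vertex or when $E_2(h_i)$ joins points of one $G_i$-orbit), then checks for each shape what $C(i,k)$ can look like for every other $k\in M$, and only then concludes that the inversions can be chosen coherently so that all remaining second edges coincide. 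None of this is carried out in your proposal, so neither the exhaustiveness of the seven types nor the sufficiency of the allowed normalizations is established.

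Two of the assertions you do make are also off. The claim that no cycle of $Gr$ can pass through $E_1(h_k)$ whenever $G_k\neq A_n$ is false as stated: the orbit argument only shows that a cycle through $E_1(h_k)$ must also use another edge coming from $h_k$ itself, and indeed every cycle in the classification passes through $E_1(h_i)$ together with $E_2(h_i)$ for the relevant $i$; this weaker statement is precisely what launches the case analysis, so it cannot be strengthened away (your later enumeration silently uses the correct version). And your dictionary for the types is inaccurate: in type $4$ the unique cycle $(E(s_1),E(s_3),E(s_2),E(s_4))$ does not pass through the pivot $E(s)$ at all --- types $2$ and $4$ are distinguished by whether a standalone transposition (the case $G_1=A_n$) is present, not by whether the cycle meets a pivot --- so a bookkeeping scheme based on ``how many pivots the cycle crosses'' would misclassify configurations unless it is replaced by the finer analysis above.
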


\begin{proof}
Recall that $G_i$ is the subgroup of $G = S_n$ generated by $\{h_j\}_{j \neq i}$. We divide the problem into $2$ following cases:

$\mathbf{Case\ I}$: $G_i \neq A_n \ \forall i$. In this case, by \cref{lm8}, $Gr_o$ is a tree with $n -2$ edges. Moreover, each $h_i$ is product of $\leq 2$ transpositions, and $E_1(h_i)$, the edge corresponding to the first transposition of $h_i$, lies in $Gr_o$.

Starting with $Gr_o$, if $h_i$ and $h_j$ are not transpositions, and $E_2(h_i) \neq E_2(h_j)$, then we add $E_2(h_i)$ and $E_2(h_j)$ to $Gr_o$. We will have a new graph $Gr(i, j)$ with $n$ edges. Therefore, there must be some cycle $C(i, j)$. If $C(i , j)$ contains some edge $E_1(h_k) = (x, y)$, then by going along the other edge of the cycle, we would get that $x$ and $y$ are in the same $G_k$-orbit, and that cannot happen. Therefore, edges of $C(i, j)$ can only be $E_1(h_i)$, $E_1(h_j)$, $E_2(h_i)$, and $E_2(h_j)$. 

There exists $h_i$ such that it is the product of $2$ transpostions because otherwise the set $\{h_j\}$ cannot generate $S_n$. So now we have $2$ subcases:

$\mathbf{Case\ I.1}$: Suppose that all $h_i$ have the same second transpositions $s$. Then add the edge corresponding to $s$ to $G_o$, we will get a tree $T$. This means that $H$ has type $1$.

$\mathbf{Case\ I.2}$: Suppose that there are $h_i$ and $h_j$ such that $E_2(h_i) \neq E_2(h_j)$. Then consider the graph $Gr(i, j)$, and the resulting cycle $C(i, j)$. WLOG, we can assume that $C(i, j)$ contains both $E_1(h_i)$ and $E_2(h_i)$. As a result, $E_2(h_i)$ must have $2$ vertices that are in different $G_i$-orbit, and therefore, $E_2(h_i) \neq E_2(h_k)$ for any $k \neq i$, so there exists the cycle $C(i, k)$ for any $k \neq i$ that $h_k$ is not a transposition. Now we consider different possible forms of $C(i, j)$:

$\mathbf{Case\ I.2.a}$: If for some $C(i, j)$, $E_1(h_i)$ and $E_2(h_i)$ share no common vertices. Then $C(i, j)$ must be a $4$ cycles and contains $E_1(h_i)$, $E_2(h_i)$, $E_1(h_j)$, and $E_2(h_j)$. Hence, $E_2(h_i)$ must have $2$ vertices that are in different $G_i$-orbits. Similarly, $E_2(h_j)$ also has $2$ vertices that are in different $G_j$-orbits. 

Because vertices in both edges correponding to $h_i$ are in different $G_i$-orbits, $C(i, k)$ must contain both these edges. Therefore, $C(i, k)$ must contain, $E_2(h_k)$, $E_1(h_i)$, $E_2(h_i)$. However, in no way, two vertices of $E_2(h_k)$ can be $2$ of $4$ vertices of $E_1(h_i)$ and $E_2(h_i)$. Contradiction. Therefore, for $k \neq i, j$, $h_k$ must be a transpositions. As a result, $H$ has type $2$.

$\mathbf{Case\ I.2.b}$: If two edges $E_1(h_i)$ and $E_2(h_i)$ in $C(i, j)$ share a common vertex instead. We can again assume that $E_1(h_i) = (1, 2)$ and $E_2(h_i) = (1, 3)$. Then for any $k \neq i$ such that $h_k$ is not a transposition, $E_2(h_k) \neq E_2(h_i)$, and $C(i, k)$ can only have two possible forms:

\begin{center}
\includegraphics[scale=0.45]{alt2.png}
\end{center}

As a result, we can change $h_i$ to $h_i^2 = h_i^{-1}$, and change some other $h_j$ to $h_j^2 = h_j^{-1}$ so that $H$ will have type $1$. \\
        
$\mathbf{Case\ II}$: Assume that there is some $G_i = A_n$. Again, by \cref{lm8}, $h_1$ is a transposition, and other $n-3$ $h_i$'s are product of $2$ transpositions such that $E_1(h_i)$, the edge corresponding to the first transposition of $h_i$, lies in $Gr_o$. Suppose that $h_1 = (x, y)$.

If $E_2(h_i) \neq E_2(h_j)$, and they are different from $E_1(h_1) = (x, y)$, then we add $E_1(h_1)$, $E_2(h_i)$ and $E_2(h_j)$ to $Gr_o$ so that we have a new graph $Gr(i, j)$ with $n$ edges. There must be some cycle $C(i, j)$. $C(i , j)$ cannot contain some edge $E_1(h_k) = (x, y)$ for $k \neq i, j$ and $1$ because otherwise $x$ and $y$ will be in the same $G_k$-orbit by going the other way from $x$ to $y$ along the cycle. So $C(i, j)$ can only contains $\leq 5$ edges: $E_1(h_i), E_1(h_j), E_2(h_i), E_2(h_j), E_1(h_1)$.

Let $M$ be the set of $h_i$ such that both $h_i$ and $h_i^2$ doesn't map $x$ to $y$.  Note that for $h_i \not \in M$ and $h_i \neq h_1$, the second transposition of $h_i$ or $h_i^2$ is exactly $h_1$. Now $M$ must have at least $1$ element because otherwise $H$ cannot generate $S_n$.
 
$\mathbf{Case\ II.0}$: If $|M| = 1$, then by changing $h_i$ to $h_i^2$ appropriately, we will get $H = \{h_1, h_1x_1,\dotsc,h_1x_{n-4}, x_{n-3}x_{n-2}\}$. Moreover, edges corresponding to transpostions $h_1$ and $x_i$ with $i \in \overline{1, n-2}$ will create a forest (and therefore a tree) on the set of vertices $\Sigma$. So $H$ has type $3$.

Now suppose $|M| \geq 2$.

$\mathbf{Case\ II.1}$: Assume that there is some element $h_i$ of $M$ such that $E_1(h_i)$ and $E_2(h_i)$ have no common vertex, and $E_2(h_i) = (u, v)$, with $u$ and $v$ are in different $G_i$-orbit.

For another $h_j \in M$, $E_2(h_j) \neq E_1(h_1)$. Also $E_2(h_i) \neq E_2(h_j), E_1(h_1)$ because of our assumption that $u$ and $v$ are in different $G_i$-orbits. Then we can consider the graph cycle $C(i, j)$ that we defined above. Note that $C(i, j)$ cannot contain only 1 edge of $h_i$ because $2$ vertices of that edge cannot be in the same $G_i$-orbit. Moreover, $C(i, j)$ cannot be a cycle with 3 edges $E_1(h_j), E_2(h_j)$, and $E_1(h_1)$ because $h_j \not \in M$.  As a result, $C(i, j)$ must contain both $E_1(h_i)$ and $E_2(h_i)$ 

Now this cycle $C(i, j)$ has only $4$ possible forms:

\begin{center}
\includegraphics[scale=0.45]{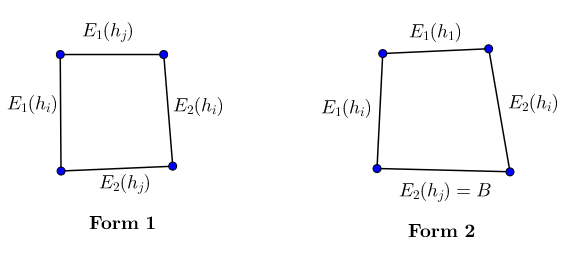}
\end{center}

If $C(i, j)$ has form $1$, then $2$ vertices of $E_2(h_j)$ must be in different $G_j$-orbits. Moreover, if there exists $h_k \in M$ with $k \neq i, j$, then $C(i, k)$ must contains both edges correponding to $h_i$, and also $E_2(h_k)$. Again, similar to $\mathbf{Case\ I.2.a}$, we see that this cannot happen. Hence $M$ must be $\{h_i, h_j\}$, and $H$ has type $4$.

Assume that $C(i, j)$ has form $2$. For $h_k \in M, k \neq i, j$, consider the cycle $C(i, k)$. $C(i, k)$ must contain both edges correponding to $h_i$ and also $E_2(h_k)$. As a result, either $E_2(h_k) = E_2(h_j) = B$ or $C(i, k)$ has the following form:

\begin{center}
\includegraphics[scale=0.45]{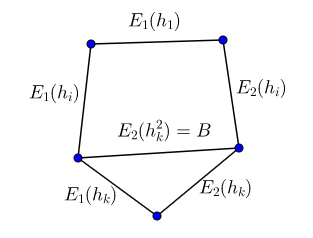}
\end{center}

Therefore, for $h_k \in M$, we can change $h_k$ to $h_k^2 = h_k^{-1}$, if necessary, so that $E_2(h_k) = E_2(h_j) = B$.
So $H$ has type $5$.

Now suppose that $C(i, j)$ has form $3$. Then, by a similar argument we used for form $2$, we can show that $H$ must also have type $5$.

\begin{center}
\includegraphics[scale=0.45]{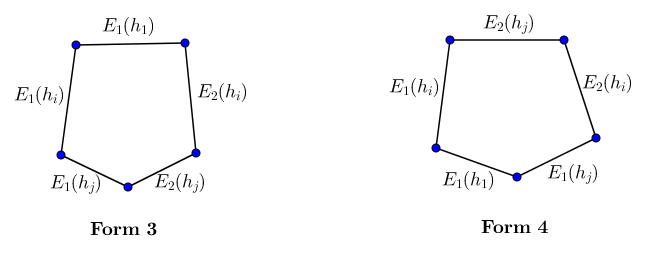}
\end{center}

Finally, we handle the case when $C(i, j)$ has form $4$. If there is any $h_k  \in M$, $k \neq i, j$, then we can consider $C(i, k)$. This cycle again must contain both edges correponding to $h_i$, $E_2(h_k)$, and possibly $E_1(h_1)$. If $C(i, k)$ has $4$ edges, then we can connect $2$ vertices of $E_1(h_j)$ by edges other than $E_1(h_j)$ and $E_2(h_j)$, and therefore these vertices have to be in the same $G_j$-orbit. Contradiction. So $C(i, j)$ must have $5$ edges, and must contain $E_1(h_1)$. But with such arrangment of edges $E_1(h_1)$, $E_1(h_i)$ and $E_2(h_i)$ in form $4$, this cycle must contain all $5$ vertices of $C(i, j)$. Again, we get that two vertices of $E_1(h_1)$ are in the same $G_j$-orbit, and this is impossible. Therefore $M$ must be $\{h_i, h_j\}$. As a result, $H$ must have type $6$. 

$\mathbf{Case\ II.2}$: For each $h_i \in M$, either $E_1(h_i)$ and $E_2(h_i)$ have a common vertex or $E_2(h_i) = (x, y)$, with $x$ and $y$ are in the same $G_i$-orbit. 

Suppose that there exists some $h_i \in M$ such that $E_2(h_i)$ has two vertices in the same $G_i$-orbit. 

If for every $h_j \in M$, $E_2(h_j) = E_2(h_i)$ or $E_2(h_j^2) = E_2(h_i)$, then $H$ must have type $3$.

If there is some $h_j \in M$, both $E_2(h_j)$ and $E_2(h_j^2) \neq E_2(h_i)$, then we can consider the cycle $C(i, j)$. This cycle cannot contain $E_1(h_i)$. Also, either $E_1(h_j)$ is not in $C(i, j)$ or $E_1(h_j)$ and $E_2(h_j)$ have a common vertex. Moreover, this cycle cannot be $(E_2(h_i), E_1(h_j), E_2(h_j))$ because otherwise, $E_2(h_j^2) = E_2(h_i)$

Therefore, the cycle $C(i, j)$ must be either $(E_1(h_1), E_2(h_i), E_2(h_j))$ or $(E_1(h_1), E_2(h_i), E_2(h_j), E_1(h_j))$.

\begin{center}
\includegraphics[scale=0.45]{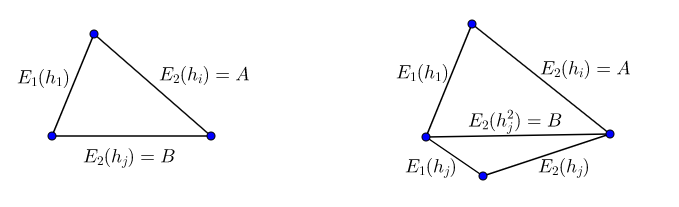}
\end{center}

In each case, for any other $h_k \in M$, then we can change $h_k$ to $h_k^2 = h_k^{-1}$, if necessary, so that all $E_2(h_k)$ can only be the edge $A$ or $B$. As a result, $H$ has type $7$.

We now can assume that for every $h_i \in M$, $E_1(h_i)$ and $E_2(h_i)$ have a common vertex, and $2$ vertices of $E_2(h_i)$ are not in the same $G_i$-orbit. Therefore, for any $h_i, h_j \in M$, $E_2(h_i) \neq E_2(h_j)$, and both are different from $E_1(h_1)$. Moreover, the cycle $C(i, j)$ cannot be a 3 cycles and can only has $1$ of the following forms:

\begin{center}
\includegraphics[scale=0.45]{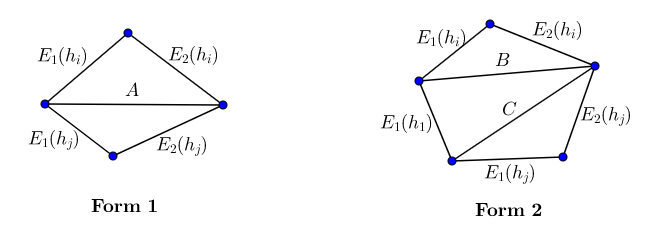}
\end{center}

If for alll $h_j \in M, j \neq i$, $C(i, j)$ has the first form, then we can change $h_k \in M$ to $h_k^2$, such that for all new $h_k$, $E_2(h_k)$ is $A$. Therefore, 
$H$ has type $3$.

If there is some $C(i, j)$ with the second form.  Then, again we can change some $h_k \in M$ to $h_k^2 = h_k^{-1}$ such that $E_2(h_k)$ is either $B$ or $C$. Therefore $H$ has type $7$.

Finally, it is easy to check that each sequence of type $1$ to $7$ is irredundant and generates $S_n$, and so our proof is complete.
\end{proof}

\subsection{Elements in an irredundant generating set of arbitrary length in the symmetric group}

\begin{lem}\label{lm10}
There exists an integer-valued function $f = f(k, q)$ with $2$ variables $q$ and $k \in \mathbb{Z} > 0$ such that $\prod\limits_{i = 0}^k d_i \leq f(k, q)$ for any $\{d_i\}_{i = 1}^k \in \mathbb{Z}$ that satisfies $ \sum\limits_{i=1}^k \dfrac{1}{d_i} = q$ ($q \in \mathbb{Q} > 0$)
\end{lem}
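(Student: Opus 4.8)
The plan is to establish the following sharper statement, from which the lemma follows at once: for each integer $k \geq 1$ and each positive rational $q$, the solution set
\[
\mathcal{S}(k,q) := \bigl\{ (d_1,\dotsc,d_k) \in \mathbb{Z}_{>0}^k : \textstyle\sum_{i=1}^k 1/d_i = q \bigr\}
\]
is finite. Granting this, one simply sets $f(k,q) := \max\{\prod_{i=1}^k d_i : (d_i) \in \mathcal{S}(k,q)\}$ when $\mathcal{S}(k,q) \neq \varnothing$ and $f(k,q) := 1$ otherwise; this is a positive integer, and $\prod_{i=1}^k d_i \leq f(k,q)$ holds for every admissible tuple (vacuously when there are none). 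I should flag that positivity of the $d_i$ is genuinely required here: if arbitrary integers are allowed, the reciprocals of $a$ and $-a$ cancel, so for instance $(a,a,-a,-a,c)$ with $c = 1/q$ (when $1/q \in \mathbb{Z}_{>0}$) satisfies $\sum 1/d_i = q$ with unbounded product $a^4 c$, and the statement fails already for $k = 5$. Hence throughout I take $d_i \in \mathbb{Z}_{>0}$.

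I would prove finiteness of $\mathcal{S}(k,q)$ by induction on $k$. Since both $\sum 1/d_i$ and $\prod d_i$ are symmetric, I may restrict to ordered tuples $d_1 \leq \dotsb \leq d_k$. For $k = 1$ the equation $1/d_1 = q$ has the single solution $d_1 = 1/q$ if $1/q \in \mathbb{Z}_{>0}$ and none otherwise, so $|\mathcal{S}(1,q)| \leq 1$. For $k \geq 2$: since $d_1 = \min_i d_i$, from $q = \sum_{i=1}^k 1/d_i \leq k/d_1$ we get $d_1 \leq k/q$, and since $1/d_1$ is one of $k \geq 2$ positive summands of $q$ we get $1/d_1 < q$, i.e.\ $d_1 > 1/q$; thus $d_1$ takes one of the finitely many positive integer values $a$ with $1/q < a \leq k/q$. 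For each such $a$, the remaining entries satisfy $\sum_{i=2}^k 1/d_i = q - 1/a$, which is again a positive rational (a sum of $k-1 \geq 1$ positive terms), so by the inductive hypothesis there are only finitely many completions, each with $\prod_{i=2}^k d_i \leq f(k-1, q-1/a)$. Taking the union over the finitely many admissible $a$ shows $\mathcal{S}(k,q)$ is finite and yields the explicit recursion
\[
f(k,q) = \max\Bigl( 1,\ \max_{\substack{a\in\mathbb{Z}_{>0}\\ 1/q<a\leq k/q}} a\cdot f(k-1,\,q-1/a) \Bigr),
\]
with $f(1,q) = 1/q$ when $1/q \in \mathbb{Z}_{>0}$ and $f(1,q) = 1$ otherwise.

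There is no serious obstacle: the whole argument is an elementary induction resting on the single inequality $d_1 \leq k/q$ for the smallest entry. The only points needing a little care are the bookkeeping ones — handling the base case $k = 1$ separately, allowing $\mathcal{S}(k,q)$ to be empty (absorbed by the outer $\max$ with $1$, so the bound is vacuously valid), checking that $q - 1/a$ remains a strictly positive rational so the inductive hypothesis applies, and — the one point genuinely worth stressing — that the lemma must be read with the $d_i$ positive. Since only the \emph{existence} of $f$ is used in the sequel, there is no need to optimise the recursion above; the crude bound it produces is enough.
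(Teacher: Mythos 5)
Your proposal is correct and follows essentially the same route as the paper: bound the smallest entry $d_1$ by $1/q < d_1 \leq k/q$ and recurse on $k-1$ with the value $q - 1/d_1$, defining $f$ by exactly the corresponding max-recursion. Your added bookkeeping (reading the $d_i$ as positive integers, which the paper's argument also tacitly assumes, and allowing the solution set to be empty) is a reasonable tightening but does not change the argument.
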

\begin{proof}
For any $q \in \mathbb{Q} > 0$, let $f(1, q) = \floor{\dfrac{1}{q}}\in \mathbb{Z}$. 

We define $f(k, q)$ recursively: $f(k, q) = \max\limits_{\frac{1}{q} < t \in \mathbb{Z} \leq \frac{k}{q}} tf(k-1, q - \frac{1}{t})\ \forall k \geq 2$ and $q > 0 \in \mathbb{Q}$.

Now we prove that $\prod\limits_{i = 0}^k d_i \leq f(k, q)$ for any $\{d_i\}_{i = 1}^k \in \mathbb{Z}$ that satisfies $ \sum\limits_{i=1}^k \dfrac{1}{d_i} = q$ by induction on $k$.

If $k = 1$, it is obvious from the definition of $f(1, q)$. Now suppose $\{d_i\}_{i = 1}^k \in \mathbb{Z}$ that satisfies $ \sum\limits_{i=1}^k \dfrac{1}{d_i} = q$, and WLOG, suppose that $d_1 = \min\limits_{i \in \overline{1, k}} d_i$. Then we get $\dfrac{k}{d_1} \geq q$, and so $d_1 \leq \dfrac{k}{q}$. Suppose $d_1 = t$ with $\dfrac{1}{q} < t \in \mathbb{Z} \leq \dfrac{k}{q}$. Then $\sum\limits_{i = 2}^k d_i = q - \dfrac{1}{t} > 0$, and so by induction hypothesis on $k - 1$, we get $\prod\limits_{i = 2}^k d_i \leq f(k-1, q - \frac{1}{t})$. As a result, $d_1d_2 \dotsc d_k \leq \max\limits_{\frac{1}{q} < t \in \mathbb{Z} \leq \frac{k}{q}} tf(k-1, q - \frac{1}{t}) = f(k, q)$ as desired.
\end{proof}
\begin{defi}
Consider two partitions $\mathbf{P} = \{X_0, X_1,\dotsc, X_m\}$, and $\mathbf{P'} = $\\ 
$\{Y_0, Y_1,\dotsc,Y_{m'}\}$ of $\Sigma = \{1, 2,\dotsc, n\}$ with $m, m' \geq 1$. (Note that these are special partitions because $|X_0|$ and $|Y_0|$ can be zero). We say that $\mathbf{P} < \mathbf{P'}$ (or $\mathbf{P'} > \mathbf{P}$) if one of the following conditions holds:
\begin{itemize}[leftmargin=0.2 in]
	\item $m' = m$ and there exists $i_0$, and $x_0 \in X_0$ such that $Y_0 = X_0 \setminus \{x_0\}$, $Y_{i_0} = X_{i_0} \cup \{x_0\}$, and $X_i = Y_i \ \forall i \neq i_0$. 
	\item $m' = m-1$, and for some $i_0 > 0$, $X_i = Y_i \ \forall i < i_0$, $Y_{i_0} = X_{i_0} \cup X_{i_0+1}$, and $X_{i+1} = Y_i \ \forall i > i_0$.
\end{itemize}
Now for each partition $\mathbf{P}$ of $\Sigma$, let $d(\mathbf{P}) = |X_0| + m \leq 1$. Then for each pair of partitions $\mathbf{P} < \mathbf{P'}$, $d(\mathbf{P'}) = d(\mathbf{P}) - 1$

Therefore, for a partition $\mathbf{P} = \mathbf{P_0}$ of $\Sigma$, any increasing chain of partitions starting at $\mathbf{P_0}$, $\mathbf{P_0} < \mathbf{P_1} < \dotsc. < \mathbf{P_{t-1}}$, has at most $d(\mathbf{P_0})$ elements.
\end{defi}

\begin{lem}\label{lm11}
There exists a function $\phi(k, l)$ so that if $H$ be a subset of $S_n$ ($n \geq 10$), and $\mathbf{P} = \{X_i\}_{i=0}^m$ is a partition of $\Sigma = \{1, 2,\dotsc, n\}$ such that $|X_i| \geq 4\ \forall i \in \overline{1, m}$ and $|X_0| = k \geq 0$ ($X_0$ can be empty), then: 
\begin{itemize}[leftmargin= 0.2 in]
	\item Either there exists $h_0 \in H$ such that there is a partition $\mathbf{P'} > \mathbf{P}$ so that if $G \leq S_n$ has $M$-property wrt $P$ (defined in \cref{def3}), and contains $h_0$, then $G$ also has $M$-property wrt $P'$
	\item Or there exists a subset $K$ of $H$ with $|K| \leq \phi(|X_0|, m)$ such that if some $G \leq S_n$ has $M$-property wrt $\mathbf{P}$ and contains $K$, then $G$ also contains $H$.
\end{itemize}
\end{lem}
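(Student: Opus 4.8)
The plan is to lift the argument of \cref{lm6} from the two-block partition $\{X,Y\}$ to the $(m{+}1)$-block partition $\mathbf{P}=\{X_0,X_1,\dots,X_m\}$, using the $M$-decomposition of \cref{lm4} relative to $\mathbf{P}$ (with $X_0$ as the distinguished block) as the organizing device and invoking \cref{lm5} in place of the ad hoc $3$-cycle computations. For each $h\in H$ fix an $M$-decomposition $h=\alpha(h)\,\beta_0(h)\,\prod_{i\ge1}\beta_i(h)$ together with its associated partition $\mathbf{Q}(h)$ of a subset of $\{1,\dots,m\}$. The dichotomy of the statement is, roughly, ``some $h_0$ has a cycle $\beta_i$ that cannot be brought to block-compatible form by conjugation inside $\prod_{t\ge1}S_{X_t}$'' versus ``every $h$ is block-compatible''; the function $\phi(k,m)$ is produced explicitly while establishing the second alternative.

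\emph{First alternative.} Suppose $h_0$ has a cycle $\beta_i$ ($i\ge1$) that, after any conjugation of $h_0$ by $\prod_{t\ge1}S_{X_t}$, either meets a block $X_j$ without exhausting it, meets two distinct blocks, or meets $X_j$ together with some $x_0\in X_0$ without exhausting $X_j$. I would then imitate Case 1 of \cref{lm6}: shrink $\beta_i$ by repeatedly peeling off transpositions, either through the strong-$M$-decomposition identity of \cref{lm4} (when two adjacent points of $\beta_i$ lie in one block) or through a commutator $h_0(\sigma h_0\sigma^{-1})^{-1}$ with $\sigma$ a transposition supported in a block, each step yielding a new element of $G$ of strictly smaller block-support; eventually one reaches an element $\alpha'(a,b,c)\in G$ with $a,b\in X_j$, $c$ a single point of one other block $X_{j'}$ (or $c=x_0\in X_0$), and $\alpha'\in S_{X_j}\times S_{X_{j'}}$ (resp.\ $\alpha'\in S_{X_j}$). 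Since $|X_j|+|X_{j'}|\ge8>4$ (resp.\ $|X_j|+1>4$) and $G$ is closed under conjugation by $S_{X_j}\times S_{X_{j'}}$ (resp.\ by $S_{X_j}$), \cref{lm5} gives $A_{X_j\cup X_{j'}}\subseteq G$ (resp.\ $A_{X_j\cup\{x_0\}}\subseteq G$). A subgroup is closed under conjugation by its own elements and $S_{X_j\cup X_{j'}}=\langle S_{X_j},S_{X_{j'}},A_{X_j\cup X_{j'}}\rangle$, so the $M$-property of $G$ with respect to $\mathbf{P}$ upgrades to the $M$-property with respect to the partition $\mathbf{P}'$ obtained by merging $X_j$ with $X_{j'}$ (resp.\ by moving $x_0$ from $X_0$ into $X_j$); in both cases $\mathbf{P}'>\mathbf{P}$, and $h_0$ is the required element.

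\emph{Second alternative.} Otherwise every $h\in H$ is block-compatible: after a conjugation by $\prod_{t\ge1}S_{X_t}$ its $\beta$-part is forced into one of finitely many normal forms — in particular each surviving cycle $\beta_i$ exhausts every block it meets, and only the $X_0$-points it uses (at most $k$ of them per element) carry non-normalizable data — so $h$ becomes $\alpha_h\cdot C_{\theta(h)}$ with $\alpha_h\in S_{X_0}\times\prod_{t\ge1}S_{X_t}$ and $\theta(h)$ a combinatorial type drawn from a set whose size is bounded in terms of $k$ and $m$. Fixing a type, I run the $H_1/H_2$ bookkeeping of \cref{lm6}: put $h\sim h'$ when $\alpha_h\alpha_{h'}^{-1}\in S_{X_0}\times\prod_{t\ge1}A_{X_t}$; one nontrivial $\sim$-pair already forces all the relevant $A_{X_t}$ into $G$ (using $|X_t|\ge4$), while $S_{X_0}$ contributes only a factor $k!$ since we cannot conjugate there; hence boundedly many representatives per type, together with a bounded number of elements realizing the ``coset'' and merge data, generate every $h$ once $G$ has the $M$-property with respect to $\mathbf{P}$. \cref{lm10} is what makes these counts uniform in the (possibly large) block sizes; summing the per-type bounds defines $\phi(k,m)$, and a direct check confirms $H\subseteq G$ whenever $G$ has the $M$-property with respect to $\mathbf{P}$ and contains this $K$.

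The real obstacle, I expect, is the reduction inside the first alternative: out of an arbitrary block-mixing cycle one must manufacture an element $\alpha'(a,b,c)$ whose non-$3$-cycle part $\alpha'$ is supported on just the two relevant blocks (so the hypothesis ``$\alpha\in S_X\times S_Y$'' of \cref{lm5} is met), all while keeping every conjugator inside $\prod_{t\ge1}S_{X_t}$ so that the $M$-property with respect to $\mathbf{P}$ stays available at each step. Tied to this is pinning down exactly which cycle-shapes are declared block-compatible when the blocks have unequal sizes, and checking that the normalizing conjugations used in the second alternative genuinely lie in $\prod_{t\ge1}S_{X_t}$ — the place where the hypothesis $|X_i|\ge4$ for $i\ge1$ is spent.
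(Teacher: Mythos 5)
Your overall architecture (a dichotomy between ``mergeable'' elements handled by a commutator trick plus \cref{lm5}, and ``rigid'' elements handled by an equivalence-class count in the spirit of \cref{lm6}, with \cref{lm10} bounding the count) is the same as the paper's, but you draw the dichotomy in the wrong place, and this is a genuine gap rather than a detail. You declare the first alternative triggered whenever a cycle $\beta_i$ ($i\ge 1$) meets two distinct blocks. That merge claim is false: take $h_0$ whose $\beta$-part is a single $2k$-cycle alternating $x_1,y_1,\dots,x_k,y_k$ between two blocks $X_1,X_2$ of equal size, exhausting both. Every conjugate of $h_0$ by $\prod_{t\ge1}S_{X_t}$ is again an alternating cycle, so $G(\{h_0\},\mathbf{P})$ lies inside the block-wreath group $S_{X_1}\wr S_2$ and cannot contain $A_{X_1\cup X_2}$; hence no coarser partition $\mathbf{P'}>\mathbf{P}$ can have the stated property, and your ``peel off transpositions'' step cannot be carried out inside $G$ (the peeling identity introduces factors that are not elements of $G$, and the commutator $h_0(\sigma h_0\sigma^{-1})^{-1}$ collapses to something useless when the block's points are uniformly spaced in the cycle). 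This is exactly the multi-block analogue of Case 2 of \cref{lm6}, where no merge is available and the elements must instead be counted. The correct trigger, which is the crux of the paper's proof, is finer: the first alternative arises precisely when some block $X_j$ met by a cycle either is not exhausted by it or has its points spaced non-uniformly along the cycle; only then does the commutator-with-a-transposition computation produce an element $\alpha_1(x,b,a)$ feeding \cref{lm5} and the merge (including the case $x\in X_0$, which moves a point of $X_0$ into $X_j$).

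The misplaced dichotomy also breaks your second alternative. Once uniformly-spaced multi-block cycles are (wrongly) excluded, you classify as block-compatible any cycle that exhausts the single block it meets with at most $k$ points of $X_0$ interspersed; but conjugation by $\prod_{t\ge1}S_{X_t}$ cannot move the $X_0$-points, so the cyclic gap pattern of those insertions is a conjugation invariant, and the number of normal forms grows with $|X_j|$ rather than being bounded by a function of $(k,m)$ --- so your $\phi(k,m)$ cannot be defined as claimed. It is precisely the equal-spacing condition (forced by the first alternative failing) that makes the count finite: if a block-compatible cycle has length $c$, contains $t\le k$ points of $X_0$, and meets blocks $X_j$, $j\in Y_i$, with spacings $d_j$, then $1=t/c+\sum_{j\in Y_i}1/d_j$, and \cref{lm10} bounds $\prod_j d_j$, which is what yields the finitely many $T$-shapes and the bound $\phi(|X_0|,m)=(2m+1)g_1(|X_0|,m)$. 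So the fix is not a refinement of your peeling argument but a redefinition of ``block-compatible'' as ``every block met is exhausted and equally spaced,'' with multi-block uniformly-spaced cycles absorbed into the counting step rather than the merging step.
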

\begin{proof}
For $u \in \mathbb{Z} > 0,\ k , l \in \mathbb{Z} \geq 0$, and $\mu$ is a set, we define the following integer-valued functions: 

\vspace{2mm}
$f_1(u, k) = k! \sum\limits_{t = 0}^{k} f(u + t, 1)$, where the function $f$ is defined in \cref{lm10} ,

$g(k, \mu) = k!\sum\limits_{Q = \{Y_1,\dotsc, Y_p\}} \prod\limits_{i = 1}^p f_1(|Y_i|, k)$, where the sum is taken over all partition $Q$ of $\mu$,
\vspace{2mm}

$g_1(k, l) = \max\limits_{\mu \subset \{1,2,\dotsc, l\}}g(k, \mu)$,
\vspace{2mm}

and finally, $\phi(k, l) = (2l + 1)g_1(k, l)$, 
\vspace{2mm}

Now assume by contradiction that the lemma does not hold. 

Consider an element $h_0$ of $H$, and let $G \leq S_n$ be the subgroup that has $M$-property wrt $\mathbf{P}$ (defined in \cref{def3}), and contains $h_0$. 

Write the $M$-decomposition wrt $\mathbf{P}$ (defined in \cref{lm4}) of $h_0$: $h_0 = \alpha.\beta_0.\prod\limits_{i = 1}^{p}\beta_i$ ($\alpha \in \prod\limits_{i = 1}^{m}S_{X_i}, \beta_0 \in S_{X_0}$, and $\beta_i$ commute with each other). Let $\mathbf{Q} = \{Y_1,\dotsc, Y_p\}$ be the partition of $\{1, 2,\dotsc, m'\}$ ($m' \leq m$) that is associated with this decomposition so that $\beta_i \not\in S_{X_t} \ \forall t \in \overline{0, m}$ is a single cycle that contains all elements of $X_j$ that are not fixed by $h$ with $j \in Y_i$.

Now we prove that for each $1 \leq j \leq m'$, elements in $X_j$ lies in the cycle $\beta_i$ with $j \in Y_i$ in such a way that the distance between 2 consecutive elements is the same. Assume by contradiction that this is not the case for some $X_{j_0}$, then if we let $d_0$ be the smallest distance between 2 elements of $X_{j_0}$ that lies in the cycle $\beta_{i_0}$, we will have $\beta_{i_0}^{d_0} = Z.T = (y_1, y_2, \dotsc, y_l, a, b, x, x_1, x_2, \dotsc, x_k).T$, where $a, b \in X_{j_0}$ while $x \in X_{j_1}, j_1 \neq j_0$. Here $Z$ and $T$ are products of distinct terms in the cycle decomposition of $\beta_{i_0}^{d_0}$.

First, we have $h_0 = \alpha.\prod\limits_{i = 0}^{p}\beta_i \in G$, so $\prod\limits_{i = 0}^{p}\beta_i.\alpha \in G$. 

Therefore, $\alpha.\prod\limits_{i = 0}^{p}\beta_i.\prod\limits_{i = 0}^{p}\beta_i.\alpha = \alpha\prod\limits_{i = 0}^{p}\beta_i^2\alpha \in G$. Again, we get $\alpha^2\prod\limits_{i = 0}^{p}\beta_i^2 \in G$. In a similar manner,we will get $L = \alpha^{d_0}\prod\limits_{i = 0}^{p}\beta_i^{d_0} \in G$. 

Hence, 
\begin{equation*}
\begin{split}
L_1 &= (a, b)\alpha^{d_0}\prod_{i = 0}^{p}\beta_i^{d_0}(a, b) \\
	& = (a, b)\alpha^{d_0}(a, b).(y_1, y_2, \dotsc, y_l, b, a, x, x_1, x_2, \dotsc, x_k)T. \prod_{i \neq i_0}\beta_i^{d_0} \in G.
\end{split}
\end{equation*}	

As a result, \begin{equation*}
\begin{split}
LL_1^{-1} & = \alpha^{d_0}(y_1, y_2, \dotsc, y_l, a, b, x, x_1, x_2, \dotsc, x_k) \cdot \\
& (x_k, \dotsc, x_2, x_1, x, a, b, y_l, \dotsc, y_2, y_1)(a, b)\alpha^{-d_0}(a, b) \\ 
				& = \alpha^{d_0}(x, b, a)(a, b)\alpha^{-d_0}(a, b) \in G
\end{split}
\end{equation*}

Hence, $(a, b)\alpha^{-d_0}(a, b)\alpha^{d_0}(x, b, a) = \alpha_1 (x, b, a) \in G$ with $\alpha_1 \in S_{X_{j_0}}$. 

If $j_1 > 0$, then let $\Sigma' = X_{j_0} \cup X_{j_1}$. Applying \cref{lm5} for the partition $\{\Sigma \setminus \Sigma', X_{j_0}, X_{j_1}\}$, we get $A_{\Sigma'} \subset G$. For $\sigma \in S_{\Sigma'}$ and $h \in G$, if $\sigma$ is an even permutation, $\sigma h \sigma^{-1} \in G$ because $A_{\Sigma'} \subset G$. Otherwise, if $\sigma$ is an odd permutation, $\sigma h \sigma^{-1} = \sigma(a, b)(a, b)h(a, b)(a, b)\sigma^{-1} \in G$ because $(a, b)h(a, b) \in G$, and $\sigma(a, b) \in A_{\Sigma'} \subset G$. Therefore, $G$ has $M$-property wrt $\mathbf{P'} = \{X_i, X_{j_0} \cup X_{j_1}\}_{i \neq j_0, j_1}$. 

Now if $j_1 = 0$ instead, then we get $A_{\{x\} \cup X_{j_0}} \subset G$. By a similar argument as above, we get $\sigma h \sigma^{-1} \in G\ \forall h \in G, \sigma \in S_{\{x\} \cup X_{j_0}}$. Hence, $G$ has $M$-property wrt $\mathbf{P'} = \{X_0\setminus\{x\}, \{x\} \cup X_{j_0}, X_i \}_{i \neq j_0 > 0}$. 

In either cases, $\mathbf{P'} > \mathbf{P}$, and the first statement in the lemma holds, and this contradicts with our initial assumption. 

In a similar manner, we can prove that all elements of $X_j$ with $j \in Y_i$ appear in the cycle $\beta_i$ by using the identity:
\[(y_1, y_2, \dotsc, y_l, a, x, x_1, x_2, \dotsc, x_k)(x_k \dotsc, x_2, x_1, x, b, y_l \cdots y_2, y_1) = (x, b, a)\], for $a, b \in X_j$, and $x \in X_{j_1} \neq X_j$. \\

As a result, for all $h \in H$, the $M$-decomposition of $h$ is $h = \alpha.\beta_0.\prod\limits_{i=1}^{p}\beta_i$ with the associated partition $\mathbf{Q} = \{Y_1,\dotsc, Y_p\}$ of $\{1, 2, \dotsc, m'\} (m' \leq m)$ such that all elements of $X_j$ appear in $\beta_i$, and the distance between any $2$ elements of $X_j$ in this cycle is the same and is $d_j\ \forall j \in Y_i$. 

Then there exists $h'$, which is a conjugate of $h$ by an element in $\prod\limits_{i=1}^m S_{X_i}$, such that the cycle $\beta_i$ ($1 \leq i \leq p$) in the $M$-decomposition of $h'$ contains all elements $X_j$ with $j \in Y_i$ which are in an increasing order and the distance between any $2$ elements in $X_j$ is the same and is $d_j$ ($j \in Y_i$). Therefore, if $\beta_i$ contains $k$ elements and $t$ elements in $X_0$, then $k = t + \sum\limits_{j \in Y_i}\frac{k}{d_j}$, $t \leq |X_0|$. Therefore, we get $1 = \frac{1}{k} + \dotsc +  \frac{1}{k} + \sum\limits_{j \in Y_i}\frac{1}{d_j}$ (with $t$ terms $\frac{1}{k}$). By \cref{lm10}, for $t \in \overline{1, |X_0|}$, $\prod\limits_{j \in Y_i} d_j \leq k^t\prod\limits_{j \in Y_i} d_j \leq f(|Y_i| + t, 1)$. 

Notice that the first element of $X_j$ that is in the cycle $\beta_i$ must appear in the $l$th position for some $l \in \overline {1, d_j}$, and subsequent element of $X_j$ in $\beta_i$ is determined by adding a multiple of $d_j$ to that first element. Therefore, $\prod\limits_{i = 0}^p\beta_i$ of $h'$ has at most 
\begin{equation*}
|X_0|! \cdot \sum_{k = t + \sum_{j \in Y_i}\frac{k}{d_j}} \prod_{j \in Y_i}{d_j} \leq |X_0|! \sum\limits_{t = 0}^{X_0} f(|Y_i| + t, 1) = f_1(|Y_i|, |X_0|)
\end{equation*}
possible forms. 

Therefore, $h' = \alpha'.T$ where $\alpha' \in \prod_{i=1}^mS_{X_i}$ so that $T = $ can only have $\leq |X_0|!\sum\limits_{Q = \{Y_1,\dotsc, Y_p\}} \prod\limits_{i = 1}^p f_1(|Y_i|, |X_0|) = g(X_0, \mu) \leq \max\limits_{\mu \subset \{1,2,\dotsc, m\}}g(X_0, \mu) = g_1(X_0, m)$ possible forms, where $Q$ is a partition of some subset $\mu$ of $\{1, 2,\dotsc, m\}$. \\

Now for $h, k \in H$, we say that $h \sim k$ if the correponding $h' = \alpha_h.T$ and $k' = \alpha_k.T$ for some $\alpha_h$ and $\alpha_k \in \prod_{i=1}^{m}S_{X_i}$.  So we have at most $g_1(X_0, m)$ equivalent classes in $H$. 

Consider a particular equivalent class $C$ and $h_0 \in C$. Suppose $G$ has $M$-property wrt $\mathbf{P}$, and $h_0, k \in G$ for some $k \in C$, then $\alpha_{h_0}\alpha_k^{-1} =  \prod_{i \in J}x_i \in \prod_{i \in J}S_{X_i} \in G$ with $x_i \neq 1$. Then $A_{X_i} \subset G$ for all $i \in J$ . Let $J_0 = \{i_0 \text{ such that } \exists k \text{ so that } \alpha_{h_0}\alpha_k^{-1} =  \prod_{i=1}^{m}x_i \in \prod_{i=1}^{m}S_{X_i} \text{ so that } x_{i_0} \neq 1\}$. Then $G$ needs to contain at most $m$ elements $k \neq h_0 \in C$ so that $A_\{X_i\} \subset G\ \forall i \in J_0$. Because elements $x_i$ from the product $\alpha_{h_0}\alpha_k^{-1} =  \prod_{i=1}^{m}x_i \in \prod_{i=1}^{m}S_{X_i}$ can be odd permutations, we will need at most other $m$ elements $k' \in C$ so that if $G$ contains these $k$ and $k'$ together with $h_0$, then $G$ will contain $C$ . Therefore, if $G$ is some subgroup that has $M$-property wrt $\mathbf{P}$, then $G$ just needs to contain at most $(2m + 1)$ elements in an equivalent class $C$ to contain all elements in this class.

As a result, there exists $K \subset H$ $|K| \leq (2m + 1)g_1(X_0, m) = \phi(X_0, m)$ such that if $G$ contains $K$, then $G$ contains $H$.
\end{proof}

\begin{lem}\label{lm12}(Generalization of \cref{lm6})
There exists a function $\psi = \psi(k, l)$ such that for any partition $\mathbf{P} = \{X_0, X_1,\dotsc, X_m\}$ of $\{1,2,\dotsc, n\}$ with $|X_i| \geq 4 \ \forall i = \overline{1, m}$, and any subset $H \subset S_n$, there exists a subset $K \subset H$ such that $|K| \leq \psi(|X_0|, m)$ and that any subgroup $G$ with $M$-property wrt $\mathbf{P}$ that contains $K$ also contains $H$.
\end{lem}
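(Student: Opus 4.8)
The plan is to iterate \cref{lm11}, inducting on the quantity $d(\mathbf{P})$ attached to a partition in the definition preceding it (equivalently on $|X_0|+m$), which strictly decreases whenever $\mathbf{P}$ is replaced by a strictly coarser partition. From the function $\phi$ of \cref{lm11} I would build $\psi$ recursively: put $\psi(0,1)=\phi(0,1)$, and for every other admissible pair set
$$\psi(k,l)=\max\bigl\{\,\phi(k,l),\ 1+\psi(k-1,l),\ 1+\psi(k,l-1)\,\bigr\},$$
where a term is omitted when its arguments leave the range $k\ge 0$, $l\ge 1$; this is well-founded by induction on $k+l$. (Since $\phi$ is visibly non-decreasing in each variable, one could just as well take $\psi(k,l)=\phi(k,l)+k+l$.) \Cref{lm12} is then proved by induction on $d(\mathbf{P})$.

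For the inductive step I apply \cref{lm11} to $H$ and $\mathbf{P}$. If its second alternative holds, the set $K$ it produces already satisfies $|K|\le\phi(|X_0|,m)\le\psi(|X_0|,m)$ and the desired implication, and we are done. If instead the first alternative holds, we obtain $h_0\in H$ and a partition $\mathbf{P}'>\mathbf{P}$ such that every subgroup with $M$-property wrt $\mathbf{P}$ that contains $h_0$ automatically has $M$-property wrt $\mathbf{P}'$. Here one checks that $\mathbf{P}'$ still meets the hypotheses of \cref{lm12}: the two coarsening moves permitted by $<$ (delete a point of $X_0$, or merge two positive parts) only enlarge parts, so every positively-indexed part of $\mathbf{P}'$ still has at least $4$ elements, its zeroth part may be empty, and $n$ is unchanged, while $d(\mathbf{P}')=d(\mathbf{P})-1$. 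The induction hypothesis applied to $H$ and $\mathbf{P}'$ yields $K'\subseteq H$ with $|K'|\le\psi(|X_0'|,m')$ such that every subgroup with $M$-property wrt $\mathbf{P}'$ containing $K'$ contains $H$. Setting $K=K'\cup\{h_0\}$, we get $|K|\le 1+\psi(|X_0'|,m')\le\psi(|X_0|,m)$ from the recursion (as $(|X_0'|,m')$ equals $(|X_0|-1,m)$ or $(|X_0|,m-1)$), and if $G$ has $M$-property wrt $\mathbf{P}$ and contains $K$, then $G\ni h_0$ forces, via \cref{lm11}, the $M$-property wrt $\mathbf{P}'$, while $G\supseteq K'$, hence $G\supseteq H$.

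The base case is a partition admitting no proper coarsening, i.e.\ $m=1$ and $X_0=\emptyset$; then the first alternative of \cref{lm11} is impossible (it would require a $\mathbf{P}'$ with $d(\mathbf{P}')<0$), so the second alternative gives $K$ with $|K|\le\phi(0,1)=\psi(0,1)$. Equivalently, since $d$ strictly drops each time the first alternative is invoked, the process terminates in finitely many steps with the second. The content is all in \cref{lm11}; what remains is bookkeeping, and that is where I expect the only friction: verifying that the hypotheses ``$|X_i|\ge 4$ for $i\ge 1$'' and ``$X_0$ may be empty'' are stable under both coarsening moves, confirming that the bound in the first alternative of \cref{lm11} depends on $\mathbf{P}$ only through the pair $(|X_0|,m)$ so that $\psi$ is genuinely a function of those two numbers, and tracking the decrease $d(\mathbf{P}')=d(\mathbf{P})-1$ in each of the two cases.
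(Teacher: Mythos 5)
Your proposal is correct and takes essentially the same route as the paper: the paper likewise iterates \cref{lm11}, collecting one element of $H$ for each coarsening step along a chain of partitions of length at most $|X_0|+m=d(\mathbf{P})$, and then applies the second alternative at the terminal partition, yielding the same bound $\psi(k,l)=\phi(k,l)+k+l$. Your induction on $d(\mathbf{P})$ with the max-recursion is only a repackaging of the paper's explicit chain construction (and, if anything, handles slightly more cleanly the fact that the terminal partition has smaller parameters, where the paper implicitly uses monotonicity of $\phi$).
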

\begin{proof}
Consider $\psi(k, l) = \phi(k, l) + k + l$, where the function $\phi$ is defined in \cref{lm11}

Let $\mathbf{P_0} = \mathbf{P}$, and start with an empty set $K_0$. For $i \geq 0$, if we have partition $\mathbf{P_i}$ and there exists some $h_i \in H$ such that any group $G$ that has $M$-property wrt $\mathbf{P_i}$ containing $h_i$ also has $M$-property wrt $\mathbf{P_{i+1}}$ with some $\mathbf{P_{i+1}} > \mathbf{P_i}$, we will add that $h_i$ to $K_0$. Because the longest chain of increasing paritions has size $|X_0| + m$, $|K_0| \leq |X_0| + m$. 

Suppose that $G$ is some subgroup of $S_n$ that has $M$-property wrt $\mathbf{P_{|K_0|}}$. Because $\mathbf{P_{|K_0|}}$ is the final partition obtained from the previous process, there is no partition $\mathbf{Q} = \{Q_0, Q_1, \dotsc, Q_{m'}\} > \mathbf{P_{|K_0|}}$ of $\{1, 2, \dotsc, n\}$ such that there is some $h \in H$ so that any group $G$ that has $M$-property wrt $\mathbf{P_{|K_0|}}$ containing $h$ also has $M$-property wrt $\mathbf{Q}$.

Therefore, by \cref{lm11}, there exist $K_1 \leq H$ with $|K_1| \leq \phi(|X_0|, m)$ so that any $G$ that has $M$-property wrt $P_{|K_0|}$ and contains $K_1$ also contains $H$. Now consider any $G \leq S_n$ containing $K_0 \cup K_1$ and has $M$-property wrt $P$. Because $K_0 \subset G$, $G$ must have $M$-property wrt $P_{|K_0|}$. Now because $G$ contains $K_1$, $G$ must contain $H$. Finally, we have $|K_0 \cup K_1| \leq |K_0| + |K_1| \leq |X_0| + m + \phi(|X_0|, m) = \psi(|X_0|, m)$, so we can finish the proof here.
\end{proof}

\begin{defi}
For a partition $\mathbf{P}= \{X_0, X_1,\dotsc, X_m\}$ of $\{1, 2,\dotsc, n\}$. A subgroup $I \leq S_n$ has $\mathbf{SP}$-property wrt $\mathbf{P}$ if given any $m$ transpositions $s_i \in S_{X_i}$ for $i \in \overline{1, m}$, for any $s \in \prod\limits_{i = 1}^m S_{X_i}$, there exists $\epsilon_t \in \{0, 1\}$ so that $s \cdot \prod_{t = 1}^m s_t ^{\epsilon_t} \in I$. 

It is obvious that if $I'$ has $\mathbf{SP}$-property wrt $\mathbf{P}$, and $I' \leq I \leq S_n$, then $I$ also has $\mathbf{SP}$-property wrt $\mathbf{P}$

Given a subgroup $I \leq S_n$ with $\mathbf{SP}$-property wrt $\mathbf{P}$, we define $G(I, K, \mathbf{P})$ to be the subgroup of $S_n$ that is generated by $xkx^{-1}$ with $x \in I$, $k \in K$.
\end{defi}

\begin{lem}\label{lm13}
Assume $\mathbf{P} = \{X_0, X_1,\dotsc, X_m\}$, $m \geq 1$, is a partition of $\{1, 2,\dotsc, n\}$, and $X$ is a subset of $\cup_{i = 1}^m X_i$ such that $|X| = k$, and $|X_i \setminus X| \geq 4\ \forall i \in \overline{1, m}$. Moreover, assume that $I \leq S_n$ has $\mathbf{SP}$-property wrt $\mathbf{P}$. Then there exists $K \subset H$, with $|K| \leq k(m + |X_0|)$ such that there exists $\{g_h\}_{h \in H} \subset G(I, K, \mathbf{P})$ so that $\forall h \in H$, $g_hh$ fixes all points in $X$.
\end{lem}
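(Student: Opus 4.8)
The plan is to prove, by induction on $|X|$, a strengthened statement that carries along an auxiliary set of ``already fixed'' points. Writing $I_Z=\{\sigma\in I:\sigma|_Z=\mathrm{id}\}$ for $Z\subseteq\Sigma$, the claim is: for all disjoint $X,Y\subseteq\bigcup_{i=1}^m X_i$ with $|X_i\setminus(X\cup Y)|\geq 4$ for every $i\in\overline{1,m}$, for every $I\leq S_n$ with $\mathbf{SP}$-property wrt $\mathbf{P}$, and for every finite $H\subseteq S_n$ with $h|_Y=\mathrm{id}$ for all $h\in H$, there exist $K\subseteq H$ with $|K|\leq|X|(m+|X_0|)$ and $\{g_h\}_{h\in H}\subseteq G(I_Y,K,\mathbf{P})$ such that $g_hh$ fixes $X\cup Y$ pointwise. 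The lemma is the case $Y=\emptyset$. Carrying $Y$ is needed because, when one peels off a single point of $X$ at a time, the corrections later produced for the remaining points must not move the point just handled. The base case $|X|=0$ is trivial: take $K=\emptyset$ and $g_h=1$.

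For the inductive step, fix $x\in X$, say $x\in X_{i_0}$, and set $Y^+=Y\cup\{x\}$, $X^-=X\setminus\{x\}$, so $X^-\cup Y^+=X\cup Y$ and all hypotheses persist. First I dispose of the single point $x$. Let $K_0\subseteq H$ consist of one representative $h_B$ for each block $B=X_j$ such that some $h\in H$ maps $x$ into $X_j\setminus\{x\}$, and one representative $h_B$ for each point $B=\{z\}\subseteq X_0$ equal to some $h(x)$ — in each case with $h_B(x)\in B$ and $h_B(x)\neq x$ — so $|K_0|\leq m+|X_0|$. Given $h\in H$ with $h(x)=p$: if $p=x$ put $\gamma_h=1$; if $p=z\in X_0$ put $\gamma_h=h_{\{z\}}^{-1}$; if $p\in X_j$ with representative $h_B$ and $h_B(x)=p'$ (so $p\neq x$, $p'\neq x$, while $p$ and $p'$ may coincide, and $p,p'\notin Y$), arrange $\sigma'\in I_{Y^+}$ with $\sigma'(x)=x$ and $\sigma'(p')=p$ — trivially if $p'=p$, and otherwise via the $\mathbf{SP}$-property applied with target $s=(p'\,p)$ together with, in each block, an auxiliary transposition chosen to fix $Y\cup\{x\}$ and, in $X_j$, also to avoid $p$ and $p'$ (possible precisely because $|X_j\setminus(X\cup Y)|\geq 4$) — and put $\gamma_h=(\sigma' h_B\sigma'^{-1})^{-1}$. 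In every case $\gamma_h\in G(I_{Y^+},K_0,\mathbf{P})$ and $\gamma_hh$ fixes $Y^+$.

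Now apply the induction hypothesis to $X^-$, $Y^+$, $I$, and the family $\mathcal{H}_1=\{\gamma_hh:h\in H\}$ (whose members fix $Y^+$): it gives $K_1\subseteq\mathcal{H}_1$, $|K_1|\leq(|X|-1)(m+|X_0|)$, and $g'_f\in G(I_{Y^+},K_1,\mathbf{P})$ with $g'_ff$ fixing $X^-\cup Y^+=X\cup Y$. Then $g_h:=g'_{\gamma_hh}\,\gamma_h$ satisfies: $g_hh$ fixes $X\cup Y$, and $g_h\in G(I_{Y^+},K_0\cup K_1,\mathbf{P})\subseteq G(I_Y,K_0\cup K_1,\mathbf{P})$. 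To replace $K_1$ by a subset of $H$, write $K_1=\{\gamma_{h_1}h_1,\dots,\gamma_{h_r}h_r\}$ with $h_t\in H$ and observe $G(I_{Y^+},K_0\cup K_1,\mathbf{P})\subseteq G(I_{Y^+},K_0\cup\{h_1,\dots,h_r\},\mathbf{P})$: every generator $\sigma(\gamma_{h_t}h_t)\sigma^{-1}=(\sigma\gamma_{h_t}\sigma^{-1})(\sigma h_t\sigma^{-1})$ with $\sigma\in I_{Y^+}$ has $\sigma h_t\sigma^{-1}\in G(I_{Y^+},\{h_t\},\mathbf{P})$, while $\sigma\gamma_{h_t}\sigma^{-1}\in G(I_{Y^+},K_0,\mathbf{P})$ since $G(I_{Y^+},K_0,\mathbf{P})$ is $I_{Y^+}$-conjugacy-invariant and already contains $\gamma_{h_t}$. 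Thus $K:=K_0\cup\{h_1,\dots,h_r\}\subseteq H$ works, with $|K|\leq(m+|X_0|)+(|X|-1)(m+|X_0|)=|X|(m+|X_0|)$, which closes the induction.

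Rather than a single hard step, the delicate points are: setting up the strengthened induction hypothesis correctly (tracking $Y$ and insisting the corrections for the remaining points fix it); the substitution argument that keeps $K$ inside the original $H$ without inflating $|K|$; and the routine but fiddly use of the $\mathbf{SP}$-property to manufacture conjugators $\sigma'$ realizing the exact prescribed behaviour on $\{x,p,p'\}\cup Y$ — which is the only place the size bounds $|X_i\setminus(X\cup Y)|\geq 4$ are consumed, mirroring the analogous role of such bounds in \cref{lm5} and \cref{lm11}.
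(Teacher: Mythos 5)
Your proof is correct, and at its core it runs on the same mechanism as the paper's: induct on $|X|$, pay at most $m+|X_0|$ elements of $H$ per point (one representative per block $X_j$ and per relevant point of $X_0$), and use the $\mathbf{SP}$-property to manufacture a conjugator in $I$ from one ``useful'' transposition plus auxiliary transpositions supported in the sets $X_i\setminus X$, which is exactly where the hypothesis $|X_i\setminus X|\geq 4$ is spent. The difference is the order of the peeling. The paper applies the induction hypothesis to $X\setminus\{x\}$ first and deals with the new point $x$ last: the destinations $l_h(x)$ of the already-corrected elements $l_h=g_hh$ are classified by blocks, the representatives $k_i$ are taken directly from $H$, and since the conjugator $n_h=(s_hs_i)\prod_t(y_tz_t)^{\epsilon_t}\in I$ avoids $X\setminus\{x\}$ (because $s_h,s_i\notin X\setminus\{x\}$ and $y_t,z_t\in X_t\setminus X$), the correction $m_h^{-1}=n_hl_{k_i}^{-1}n_h^{-1}$ automatically preserves the points already fixed; hence no strengthening of the statement is needed and $K=K_1\cup K_2\subset H$ comes for free. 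You instead fix $x$ first and recurse on $X\setminus\{x\}$, which forces you to carry the auxiliary set $Y$ of already-fixed points, to work with the stabilizers $I_Y$ and $I_{Y^+}$, and to add the substitution argument at the end to pull $K$ back inside $H$. Both routes give the same bound $k(m+|X_0|)$; the paper's ordering buys a leaner statement and a shorter verification that the corrections lie in $G(I,K,\mathbf{P})$, while your version spells out the closure properties (invariance of $G(I_{Y^+},K_0,\mathbf{P})$ under $I_{Y^+}$-conjugation, replacing the generators $\gamma_{h_t}h_t$ by $h_t$) that the paper's rather terse computation of $u_h$ leaves implicit.
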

\begin{proof}
We prove by induction on $k \geq 0$. 

For $k = 0$, there is nothing to prove.

Suppose the statement is true for $k -1$ ($k \geq 1$). We prove that it is also true for $k$. Consider a point $x \in X$. Applying the induction hypothesis on $k - 1$ for $X \setminus \{x\}$, there exists $K_1 \subset H$, with $|K_ 1| \leq (k - 1)(m + |X_0|)$ such that there exists $l_h = g_hh$ fixed all points in $X \setminus \{x\}$. Here $\{g_h\}_{h \in H} \subset G(I, K_1, \mathbf{P})$

Consider the set $S = \{l_h(x), h \in H, l_h(x) \neq x\}$, which is the set of points $\neq x$ that $l_h$ maps $x$ to. For $s, t \in S$, we say that $s \sim t$ if $s$ and $t$ are in the same $X_i$ for some $i \geq 1$. Then we have $t \leq m + |X_0|$ equivalent classes in $S$. 

Now suppose that $\{s_i\}_{i = 1}^t$ are representatives of these equivalent classes such that $l_{k_i}(x) = s_i$. Let $K_2 = \{k_i\}_{i = 1}^t \subset H$. We will prove that $K = K_1 \cup K_2$ is the set $K$ that we need to find. First notice that $|K| \leq |K_1| + |K_2| \leq (k-1)(m + |X_0|) + (m + |X_0|) = k(m + |X_0|)$

For $h \in H$ such that $l_h(x) = x$, then $l_h(x)$ fixes all points in $X$, and we have $g_hh = l_h$ fixes all points in $X$ with $g_h \in G(I, K_1, \mathbf{P}) \leq G(I, K, \mathbf{P})$.

We now prove that for other $h \in H$, there exists $m_h = gl_{k_i}g^{-1}$ for some $i \in \overline {1, t}$, and some $g \in I$ such that $m_h^{-1}l_h$ fixes all points in $X$. Consider $h \in H$ such that $l_h(x) \neq x$. Assume that $l_h(x)= s_h \sim s_i$ for some $i \in \overline{1, t}$. If $s_h = s_i$, then choose $m_h = l_{k_i}$, and we will have $m_h^{-1}l_h$ fixes $x$ and all points in $X \setminus \{x\}$. 

We suppose that $s_h \neq s_i$ are in some same set $X_j$ ($1 \leq j \leq m$). First, $s_h$ and $s_i \not \in X \setminus \{x\}$ because $l_h$ and $l_{k_i}$ don't fixes $s_h$ and $s_j$. 

Now choose two other elements $y_j \neq z_j \neq s_h,\ s_i \in X_j \setminus X$, and $y_t \neq z_t \in X_t \setminus X$ for $1 \leq t \neq j \leq m$ ($y_j$, $z_j$, $y_t$, and $z_t$ exist because $|X_i \setminus X| \geq 4$). By definition, $n_h = (s_hs_i) \cdot \prod\limits_{t= 1}^m (y_tz_t)^{\epsilon_t} \in I$ for some $\epsilon_t \in \{0, 1\}$. As a result, $m_h = n_hl_{k_i}n_h^{-1} = n_hl_{k_i}n_h$ will still fixes all points in $X \setminus \{x\}$ because $s_h, s_i \not \in X \setminus \{x\}$, and $y_t, z_t \not \in X \setminus \{x\}$ for $t \in \overline{1, m}$. Furthermore, $m_h$ maps $x$ to $s_h$, and therefore $m_h^{-1}l_h$ fixes all points in $X$.

Now note that we have $m_h^{-1}l_h = m_h^{-1}g_h h = u_hh$ fixes all points in $X$. Moreover, $u_h = m_h^{-1}g_h = n_hl_{k_i}^{-1}n_hg_h = n_hg_{k_i}n_h^{-1}n_hk_in_h^{-1}g_h \in G(I, K, \mathbf{P})$ because $n_hg_{k_i}n_h^{-1}$ and $g_h \in G(I, K_1, \mathbf{P}) \subset G(I, K, \mathbf{P})$, and $n_hk_in_h^{-1} \in G(I, K_2, \mathbf{P}) \subset G(I, K, \mathbf{P})$. Therefore, $K$ is the set that we need to find to finish the induction step on $k$ and, therefore, to finish the proof.
\end{proof}

\begin{defi}\label{def9}
Let $\mathbf{P}= \{X_0, X_1,\dotsc, X_m\}$ be a partition of $\{1, 2,\dotsc, n\}$, and $I$ be a subgroup of $S_n$ that has $\mathbf{SP}$-property wrt $\mathbf{P}$. We say that $G \leq S_n$ has $\mathbf{K}$-property wrt $\mathbf{P}$ and $I$ if $xgx^{-1} \in G \ \forall g \in G, x \in I$.  
\end{defi}

\begin{lem}\label{lm14}
There exists a function $\omega = \omega(u, v)$ so that for any partition $\mathbf{P}= \{X_0, X_1,\dotsc, X_m\}$ of $\{1, 2,\dotsc, n\}$ with $|X_i| \geq 6 \ \forall i \in \overline{1, m}$, any subgroup $I \leq S_n$ that satisfies $\mathbf{SP}$-property wrt $\mathbf{P}$, and subset $H \subset S_n$, there exists a subset $K \subset H$ such that $|K| \leq \omega(|X_0|, m)$ and that any subgroup $G$ with $\mathbf{K}$-property wrt $\mathbf{P}$ and $I$ that contains $K$ also contains $H$.
\end{lem}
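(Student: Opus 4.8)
The plan is to stack \cref{lm13} on top of \cref{lm12}: first use \cref{lm13} to replace every $h\in H$ by an element that fixes two chosen points in each block $X_i$, then apply \cref{lm12} to the resulting set, and bridge the two by a short direct argument showing that, once one restricts attention to the elements fixing those frozen points, the weak closure condition ($\mathbf{K}$-property wrt $I$) is automatically upgraded to the genuine $M$-property, precisely because $I$ has the $\mathbf{SP}$-property. Concretely, for each $i\in\overline{1,m}$ fix two distinct points $a_i,b_i\in X_i$ (possible since $|X_i|\geq 6$), put $X:=\{a_i,b_i:i\in\overline{1,m}\}$, $X_i':=X_i\setminus X$ (so $|X_i'|\geq 4$), and let $\mathbf{P}':=\{X_0\cup X,\,X_1',\dots,X_m'\}$. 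I would then set
\[
\omega(u,v):=2v(u+v)+\psi(u+2v,v),
\]
where $\psi$ is the function of \cref{lm12}. Step 1 is to apply \cref{lm13} with this $X$ (so $k=|X|=2m$): this gives $K_1\subseteq H$ with $|K_1|\leq 2m(m+|X_0|)$ and elements $g_h\in G(I,K_1,\mathbf{P})$ with $g_hh$ fixing $X$ pointwise for every $h\in H$. Put $H':=\{g_hh:h\in H\}$. If a subgroup $G$ has $\mathbf{K}$-property wrt $\mathbf{P}$ and $I$ and contains $K_1$, then $G$ contains every generator $xkx^{-1}$ of $G(I,K_1,\mathbf{P})$ ($x\in I$, $k\in K_1$), hence $G(I,K_1,\mathbf{P})\subseteq G$, so $g_h\in G$ for all $h$; in particular, for such $G$ one has $H\subseteq G$ if and only if $H'\subseteq G$.

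Step 2 is the core of the argument. For any such $G$, let $G^X:=\{g\in G:g\text{ fixes }X\text{ pointwise}\}$; I claim $G^X$ has $M$-property wrt $\mathbf{P}'$. Take $g\in G^X$ and $\sigma\in\prod_{i=1}^m S_{X_i'}\leq\prod_{i=1}^m S_{X_i}$. Apply the $\mathbf{SP}$-property of $I$ with the transpositions $s_i:=(a_ib_i)\in S_{X_i}$: there are $\epsilon_t\in\{0,1\}$ with $\tau:=\sigma\cdot\prod_{t=1}^m(a_tb_t)^{\epsilon_t}\in I$. Writing $P:=\prod_{t=1}^m(a_tb_t)^{\epsilon_t}$, note that $P$ commutes with $g$ (as $g$ fixes every $a_t,b_t$) and with $\sigma$ (disjoint supports), so $\tau g\tau^{-1}=\sigma PgP^{-1}\sigma^{-1}=\sigma g\sigma^{-1}$. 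Since $\tau\in I$ and $g\in G$, the $\mathbf{K}$-property gives $\sigma g\sigma^{-1}\in G$, and it fixes $X$ because $\sigma$ does; hence $\sigma g\sigma^{-1}\in G^X$, proving the claim.

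Step 3 is the assembly. Apply \cref{lm12} to $H'$ and $\mathbf{P}'$ (whose nontrivial blocks have size $\geq 4$): it yields $K_2\subseteq H'$ with $|K_2|\leq\psi(|X_0|+2m,m)$ such that any subgroup with $M$-property wrt $\mathbf{P}'$ containing $K_2$ contains $H'$. For each element of $K_2$ choose one $h\in H$ with $g_hh$ equal to it; let $K_2'\subseteq H$ collect these, so $|K_2'|\leq|K_2|$ and $\{g_hh:h\in K_2'\}=K_2$. Put $K:=K_1\cup K_2'$, so $|K|\leq\omega(|X_0|,m)$. Suppose $G$ has $\mathbf{K}$-property wrt $\mathbf{P}$ and $I$ and $K\subseteq G$. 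By Step 1, $g_h\in G$ for all $h$, so $K_2=\{g_hh:h\in K_2'\}\subseteq G$; since elements of $K_2$ fix $X$, $K_2\subseteq G^X$; by Step 2, $G^X$ has $M$-property wrt $\mathbf{P}'$, so \cref{lm12} gives $H'\subseteq G^X\subseteq G$; finally $h=g_h^{-1}(g_hh)\in G$ for all $h\in H$, i.e.\ $H\subseteq G$, which is the desired conclusion.

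The delicate point is Step 2: the $\mathbf{SP}$-property only lets one realize a general $\sigma\in\prod_{i=1}^m S_{X_i}$ inside $I$ after multiplying by a product of the fixed transpositions $(a_ib_i)$, so the conjugated element must fix all of the $a_i,b_i$ for that correction to pass harmlessly through it — which is exactly why the reduction of Step 1 (and the hypothesis $|X_i|\geq 6$ rather than $\geq 4$, allowing two points per block to be moved into the frozen part while still leaving blocks of size $\geq 4$ for \cref{lm13} and \cref{lm12}) is essential. Everything else is routine bookkeeping, in particular checking that $K$ stays inside $H$ and that containing $K_1$ forces $G(I,K_1,\mathbf{P})\subseteq G$, which is what makes the correspondence $h\leftrightarrow g_hh$ usable.
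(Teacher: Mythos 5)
Your proposal is correct and is essentially the paper's argument: freeze two points $x_i,y_i$ per block via \cref{lm13}, shrink the resulting fixed set $H'$ via \cref{lm12}, and bridge $\mathbf{K}$-property to $M$-property with the $\mathbf{SP}$-correction $\prod_t (x_ty_t)^{\epsilon_t}$, which acts trivially on elements fixing the frozen points. The only organizational difference is that you apply \cref{lm12} to the coarsened partition $\mathbf{P}'$ and verify the $M$-property directly for the stabilizer $G^X$ inside the arbitrary $G$ (giving the bound $2m(m+|X_0|)+\psi(|X_0|+2m,m)$), whereas the paper keeps the original partition $\mathbf{P}$ and argues through $G(I,K,\mathbf{P})$ and the $\prod_i S_{X_i}$-closure $G'$ of $K'$ (giving $2m(m+|X_0|)+\psi(|X_0|,m)$); both choices of $\omega$ are equally valid.
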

\begin{proof}
By \cref{lm12}, there exists a function $\psi(k, l)$ so that for any partition $\mathbf{P'} = \{X_0, X_1, \dotsc, X_{m'} \}$ of $\{1, 2,\dotsc, n\}$ with $|X_i| \geq 4$, and any subset $U \subset S_n$, there exists a subset $V \subset U$ with $|M| \leq \psi(|X_0|, m)$ such that any subgroup $G \leq S_n$ that has $M$-property wrt $\mathbf{P}$ and contains $V$ also contains $U$. Then we define $\omega(k, l) = \psi(k, l) + 2l(k + l)$ for $k, l \in \mathbb{Z} \geq 0$.

Now consider any subset $H \subset S_n$. 

Let $X = \cup_{i=1}^m\{x_i, y_i\}$, in which $x_i$ and $y_i$ are two fixed elements of $X_i$. By \cref{lm13} for $H$, $X$, and partition $\mathbf{P}$, there exists $K_1 \subset H$ with $|K_1| \leq |X| \cdot (m + |X_0|) = 2m(m + |X_0|)$ such that $\forall h \in H, g_hh$ fixed all points in $X$ for some $g_h \in G(I, K_1, \mathbf{P})$.

Let $H' = \{g_hh, h \in H\}$. Then by definition of $\psi$, there exists a subset $K' \subset H'$ with $|K'| \leq \psi(X_0, m)$ so that any subgroup $G \leq S_n$ with $\mathbf{M}$-property wrt $\mathbf{P}$ that contains $K'$ also contains $H'$. We can assume that $K' = \{g_{k_i}k_i\}_{i = 1}^t$ for some $k_i \in H$ and $t \leq \psi(|X_0|, m)$.

Now let $K = K_1 \cup \{k_i\}_{i = 1}^t$. We will prove that $K$ is the subset of $H$ that we need to find.

Suppose that $G'$ is the subgroup of $S_n$ that is generated by $gk'g^{-1}$ with $k' \in K'$ and $g' \in \prod\limits_{i=1}^mS_{X_i}$. Then $G'$ has $M$-property wrt $\mathbf{P}$ and also contains $K'$. Therefore, $G'$ contains $H'$. 

Now for each generator $l = g_lg_{k_i}k_ig_l^{-1}$ of $G'$ with $g_l \in \prod_{j = 1}^mS_{X_i}$. By definition of $I$, there exists $\epsilon_t \in \{0, 1\}$ such that $n_l= g_l \cdot \prod\limits_{t = 1}^m(x_ty_t)^{\epsilon_t} \in I$.

Because $g_{k_i}k_i \in H$ fixes $x_j$ and $y_j$, 
\begin{equation*}
n_lg_{k_i}k_in_l^{-1} = g_l(\prod\limits_{t = 1}^m(x_ty_t)^{\epsilon_t}g_{k_i}k_i\prod\limits_{t = 1}^m(x_ty_t)^{\epsilon_t})g_l^{-1}= g_l(g_{k_i}k_i)g_l^{-1} = l
\end{equation*}

As a result, $l \in G(I, K, \mathbf{P})$ because $g_{k_i} \in G(I, K_1, \mathbf{P}) \subset G(I, K, \mathbf{P})$, and $k_i \in K \subset G(I, K, \mathbf{P})$. Therefore, $G' \leq G(I, K, \mathbf{P})$, and so $H' \subset G(I, K, \mathbf{P})$. Then for any $h \in H$, $g_hh \in H' \subset G(I, K, \mathbf{P})$, and because $g_h \in G(I, K_1, \mathbf{P}) \subset G(I, K, \mathbf{P})$, $h \in G(I, K, \mathbf{P})$. Hence $H \subset G(I, K, \mathbf{P})$.

Finally, any subgroup of $S_n$ with $\mathbf{K}$-property wrt $\mathbf{P}$ and $I$ that contains $K$ also contains $G(I, K, \mathbf{P})$ and, therefore, contains $H$. Because $|K| \leq |K_1| + t \leq 2m(m + |X_0|) + \psi(|X_0|, m) = \omega(|X_0|, m)$, we finish our proof here. 
\end{proof}

\begin{lem}\label{lm15}
Suppose that $H \leq S_2 \wr S_m$, and $\forall n \geq n_0$, if $K \neq A_n \leq S_n$, then $m(K) \leq n - k + 1$. Then for all $m \geq \max\{n_0, k + 1\}$, $m(H) \leq 2m - k$
\end{lem}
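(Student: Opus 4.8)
The plan is to split an irredundant generating set of $H$ along the base subgroup and then squeeze the base part using its module structure. Write $n=2m$ and view $S_2\wr S_m$ as acting on $m$ blocks of size $2$, with normal base subgroup $N=S_2^{m}\trianglelefteq S_2\wr S_m$, which I identify with the permutation module $\mathbb{F}_2^{m}$ on which $S_m$ permutes coordinates. I will treat the case where $H$ is transitive (the one needed in \cref{thm2}), so that its block-action image $\bar H\leq S_m$ is transitive on the $m$ blocks; the general case reduces to this along the orbits of $\bar H$. Take an irredundant generating set of $H$ of size $l=m(H)$; by \cref{lm1} applied to $N\cap H\trianglelefteq H$ we may reorder it and replace the tail by elements of $N$, obtaining $g_1,\dotsc,g_j,h_{j+1},\dotsc,h_l$ with $\overline{g_1},\dotsc,\overline{g_j}$ an irredundant generating set of $\bar H$ (so $j\leq m(\bar H)$) and $h_{j+1},\dotsc,h_l\in N\cap H$. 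The naive bounds $j\leq m(\bar H)\leq m-1$ and $l-j\leq\dim_{\mathbb{F}_2}(N\cap H)\leq m$ only give $m(H)\leq 2m-1$ — the same bound \cref{lm2} gives since $|\Gamma|=2$ — so the real content is to save $k-1$ more, and this is where the $\bar H$-action matters.

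Set $M=\genby{g_1,\dotsc,g_j}$, $V=N\cap H$, $V_0=N\cap M$ (both $\bar H$-submodules of $\mathbb{F}_2^{m}$, since $M$ normalizes $N$), and $Q=V/V_0$. Unwinding irredundance: for $i>j$ one checks that $\genby{g_1,\dotsc,g_j,h_{j+1},\dotsc,\widehat{h_i},\dotsc,h_l}=M\cdot W_i$, where $W_i$ is the $\bar H$-submodule of $N$ generated by $V_0\cup\{h_\nu:\nu>j,\ \nu\neq i\}$, so this being proper forces $W_i\neq V$; and generating all of $H=M\cdot V$ forces the $\bar H$-span of $V_0\cup\{h_{j+1},\dotsc,h_l\}$ to be $V$. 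Hence the images $\overline{h_{j+1}},\dotsc,\overline{h_l}$ form an irredundant generating set of $Q$ \emph{as an $\mathbb{F}_2[\bar H]$-module}, so $l-j\leq m_{\bar H}(Q)$, the maximal size of such a set. A module-irredundant generating set $v_1,\dotsc,v_r$ satisfies $\genby{v_1,\dotsc,v_{r-1}}\subsetneq Q$, so inducting on composition length gives $m_{\bar H}(Q)\leq \ell(Q)\leq \ell(\mathbb{F}_2^{m})$, as $Q$ is a subquotient of $N$.

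Now the case analysis on $\bar H$. If $\bar H$ is nontrivial and $\bar H\notin\{A_m,S_m\}$: transitivity of $\bar H$ forces the coordinate-sum hyperplane $W_0$ to be the \emph{only} $\bar H$-invariant hyperplane of $\mathbb{F}_2^{m}$ (an invariant functional is the indicator sum of an invariant subset of $\{1,\dotsc,m\}$, and only $\emptyset$ and everything qualify). If $l-j$ were $m$ then $Q=\mathbb{F}_2^{m}$ and the $\overline{h_i}$ would be a basis, yet dropping any one of them would span an invariant hyperplane, hence $W_0$, forcing all of them into $W_0$ — impossible; so $l-j\leq m-1$. Applying the hypothesis to $\bar H$ (a transitive proper subgroup of $S_m$ other than $A_m$, with $m\geq n_0$) gives $m(\bar H)\leq m-k+1$, hence $m(H)=j+(l-j)\leq (m-k+1)+(m-1)=2m-k$. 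If $\bar H\in\{A_m,S_m\}$: here $m(S_m)=m-1$ and $m(A_m)=m-2$, and I instead use the structure of the mod-$2$ permutation module of these groups — its composition factors are the trivial module (multiplicity $\leq 2$) and the irreducible heart $D^{(m-1,1)}$ once, and every subquotient is uniserial or of the form $\mathbf{1}\oplus D^{(m-1,1)}$, so $m_{\bar H}(Q)\leq 2$. Then $m(H)\leq m(\bar H)+2$, which is $\leq m+1\leq 2m-k$ for $\bar H=S_m$ (as $m\geq k+1$) and $\leq m\leq 2m-k$ for $\bar H=A_m$ (as $m\geq k$). If $\bar H$ is trivial then $H\leq N$ and $m(H)\leq i(N)=m\leq 2m-k$. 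This exhausts the cases.

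The step I expect to be the main obstacle is the module-theoretic input for $\bar H\in\{A_m,S_m\}$: proving that neither the mod-$2$ permutation module nor any of its subquotients admits an irredundant module generating set of size $3$, which reduces to irreducibility of the heart $D^{(m-1,1)}$ over $\mathbb{F}_2$ (also after restriction to $A_m$) together with uniseriality of the permutation module when $m$ is even and its splitting when $m$ is odd. Lesser points still to be dispatched are: the reduction to $\bar H$ transitive, or a direct treatment of intransitive $\bar H$ via the block-orbit decomposition (the small orbits being the delicate part); a careful verification that irredundance of the $h_i$ inside the full generating set really is equivalent to module-irredundance of the $\overline{h_i}$ in $Q$ exactly as stated; and the boundary arithmetic at $m=k+1$ and $m=n_0$.
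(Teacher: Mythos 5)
Your proposal is correct in substance and shares the paper's skeleton: apply \cref{lm1} to split an irredundant generating set into block-image generators and base elements of $S_2^m=\mathbb{F}_2^m$, then argue by cases according to whether the block image $\bar H$ is $A_m$/$S_m$ or not, obtaining $m(\bar H)\leq m-k+1$ plus a base contribution $\leq m-1$ in the first case and $m(\bar H)+2\leq m+1\leq 2m-k$ in the second. Where you genuinely differ is in how the two base-part bounds are established. The paper works by hand inside $\mathbb{F}_2^m$: in the non-$A_m$/$S_m$ case it shows the base elements cannot span all of $\mathbb{F}_2^m$, because conjugating a base vector $\neq(1,\dotsc,1)$ by a block element (using transitivity of $H$) rewrites one base generator in terms of the others, contradicting irredundancy; in the $A_m$/$S_m$ case it shows that a single nontrivial base vector, conjugated by even block permutations, already generates the sum-zero subspace, so at most two base elements are needed. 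You recast both bounds as statements about module-irredundant generating sets of the subquotient $Q=V/V_0$ of the mod-$2$ permutation module: uniqueness of the invariant hyperplane $W_0$ for transitive $\bar H$ rules out a base contribution of $m$, and the submodule chain $0\subset\langle\mathbf{1}\rangle,\,W_0\subset\mathbb{F}_2^m$ with irreducible heart $D^{(m-1,1)}$ (also after restriction to $A_m$, for $m\geq 5$) gives $m_{\bar H}(Q)\leq 2$. The module facts you flag as the main obstacle are true and can be verified by the same elementary orbit computations the paper uses to generate the sum-zero subspace, so they are not a real gap; and your restriction to transitive $H$ matches the paper, whose own proof also invokes transitivity of $H$ even though the lemma does not state it (this is how \cref{lm15} is used in \cref{thm2}). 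Your reduction to module-irredundance is cleaner and makes explicit why the bound is $m(\bar H)+2$ in the $A_m$/$S_m$ case, at the cost of importing standard facts about the mod-$2$ permutation module; the paper's version is more elementary and self-contained, and both yield the same final arithmetic.
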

\begin{proof}
By \hyperref[lm1]{Whiston}'s lemma, we can assume that there is an irredundant generating set $\{g_1,\dotsc, g_l\}$ with $l = m(H)$ such that $\{g_1,\dotsc, g_r\}$ generates block permutation, and $\{g_{r+1},\dotsc, g_l\} \subset S_2^m$.

$\mathbf{Case\ 1}$: The subgroup generating block permutation and generated by $\{g_1,\dotsc, g_r\}$ doesn't act as $S_m$ or $A_m$ on $m$ blocks, then $r \leq m - k  + 1$. 

If $K = \{g_{r+1}, \dotsc, g_l\}$ doesn't generate the whole vector space $S_2^m = F_2^m$, $l-r \leq m - 1$. Hence, $m(H) = l \leq m - k + 1 + m - 1 = 2m - k$. 

Now suppose $K$ generates the whole vector space $F_2^m$. Take some $g_i$ that is not $(1,1,\dotsc, 1)$. Then there is some entry 0 of $g_i$, and WLOG, assume that there is one entry 0 on $g_i$'s last coordinate. Also, WLOG, assume that $1$ appears in the first coordinate of $g_i$. Because $H$ is transitive, there exists $h$ generated by $g_1,\dotsc, g_r$ that moves $m$th block to the first block. Hence $h^{-1}g_ih$ will have the last entry turned to $1$ and therefore is of the form $\sum \alpha_jg_j$ with some $\alpha_j \neq 0$ and $j \neq i$. Hence $\{g_1, \dotsc, \hat{g_j},..,g_l\}$ generates the whole space. This is impossible because the original set $\{g_1,\dotsc,g_l\}$ must be irredundant.

$\mathbf{Case\ 2}$: The subgroup generating block permutation and generated by $\{g_1,\dotsc, g_r\}$ acts as $S_m$ or $A_m$ on $m$ blocks.

Then, since $F_2$ is abelian, for any element $g_i$, after performing an even permutation on the coordinates of $g_i$, the new vector will be generated by $\{g_1,\dotsc, g_k\}$ and $g_i$. Now, WLOG, suppose that the element $g_{k+1}$ of $\{g_{k+1},\dotsc, g_l\} = (1,,\dotsc,1,0,\dotsc,0) \neq (1,1,,\dotsc,1)$. First permute $2$ zero entries or $2$ one entries, if any, of $g_{k+1}$, and then move the first entry, which is $1$, to the position of the first zero entry. Hence, we get the new vector of the form $(0,1,\dotsc1,0,\dotsc,0)$ that is generated by $\{g_1,\dotsc,g_{k+1}\}$ . Continue doing this until we get the set of vectors $(0,..0,1,..1,0,..,0)$ with $i$ first zero entries for any $i$. Therefore, by using only $\{g_1,\dotsc,g_k\}$, and $g_{k+1}$, we can already cover the subspace $\{(x_1,\dotsc,x_m): x_1+\dotsc+x_m = 0\}$ of $F_2^m$. To generate the whole space, we only need at most $1$ more vector. So, in total, we only need $\leq 2$ elements $g_i$ together with $\{g_1,\dotsc,g_k\}$ to generate all elements in $K$. Therefore $m(H) = l \leq m - 1 + 2 = m + 1 \leq 2m - k$  for $m \geq k + 1$.  
\end{proof}

\begin{nt}\label{nt2}
For any direct product $G = \prod_{i = 1}^k G_i$ for some finite group $G_i$, $\pi_i: G \to G_i$ denotes the projection map from $G$ to its factor $G_i$, and $\pi_{\{i_1, i_2,\dotsc, i_j\}}: G \to \prod\limits_{t = 1}^j G_{i_t}$ denotes the projection map from $G$ to the product of its factors $G_{i_t}$ for $t = \overline{1, j}$. Note that $\pi_i = \pi_{\{i\}}$. 

In particular, for $g = \prod\limits_{t = 1}^k g_t \in G$ with $g_i \in G_i$, $\pi_i(g) = g_i$, and\\ $\pi_{\{i_1, i_2, \dotsc, i_j\}}(g) = \prod\limits_{t = 1}^jg_{i_t}$. 

\vspace{2mm}
If $H$ is a subset or subgroup of $G$, $\pi_i(H)$ denotes the image of $H$ under the projection map $\pi_i$. Similarly, we also have $\pi_{i_1, i_2, \dotsc, i_j}(H)$.
\end{nt}
\begin{lem}\label{lm16}
Let $H$ be a subgroup of $G = \prod\limits_{i = 1}^k G_i$ for some finite group $G_i$, and let $m(H) = m$. Assume also that $\pi_i: G \to G_i$ be the projection map from $G$ to its factor $G_i$. Then there exists $H_i \subset \prod\limits_{j = i}^k G_j \cap H$, so that $m = \sum\limits_{i = 1}^k|H_i| \leq \sum\limits_{i = 1}^k m(T_i)$, $T_i = \pi_i(\genby{H_i})$. ($\pi_i$ is defined in \cref{nt2})
\end{lem}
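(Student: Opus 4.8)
The plan is to prove, by induction on the number of factors $k$, the following slightly stronger statement: for any finite groups $G_1,\dotsc,G_k$, any subgroup $H\le\prod_{i=1}^k G_i$, and \emph{any} irredundant generating set $S$ of $H$ (not necessarily of maximal size), there exist subsets $H_i\subset H\cap\prod_{j=i}^k G_j$ with $|S|=\sum_{i=1}^k|H_i|$ and $|H_i|\le m(T_i)$, where $T_i=\pi_i(\genby{H_i})$. Then \cref{lm16} is the special case in which $S$ is chosen of maximal size $m=m(H)$. The engine of the induction is Whiston's lemma (\cref{lm1}), applied repeatedly to peel off one coordinate at a time.

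For the base case $k=1$ take $H_1=S$; then $T_1=\pi_1(\genby{S})=H$ and $|H_1|=|S|\le m(H)=m(T_1)$ since $S$ is an irredundant generating set of $H$. For the inductive step, set $N=H\cap\prod_{j=2}^k G_j$, which is normal in $H$ because $\prod_{j=2}^k G_j\trianglelefteq\prod_{j=1}^k G_j$; note that $N=\ker(\pi_1|_H)$, so the first isomorphism theorem gives $H/N\cong\pi_1(H)$, an isomorphism carrying $\overline{g}$ to $\pi_1(g)$. Applying \cref{lm1} to $H$, $N$, and $S$, and writing $S=(g_1,\dotsc,g_s)$ after the reordering, we obtain $k_1\le s$ and elements $h_{k_1+1},\dotsc,h_s\in N$ such that $\overline{g_1},\dotsc,\overline{g_{k_1}}$ form an irredundant generating set of $H/N$ and $g_1,\dotsc,g_{k_1},h_{k_1+1},\dotsc,h_s$ form a new irredundant generating set of $H$. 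Put $H_1=\{g_1,\dotsc,g_{k_1}\}$; since $\overline{g_1},\dotsc,\overline{g_{k_1}}$ generate $H/N$ we have $\genby{H_1}N=H$, hence $T_1=\pi_1(\genby{H_1})=\pi_1(H)$, and transporting along the isomorphism above shows that $\pi_1(g_1),\dotsc,\pi_1(g_{k_1})$ is an irredundant generating set of $T_1$, so $|H_1|=k_1\le m(T_1)$.

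To finish, let $H''=\genby{h_{k_1+1},\dotsc,h_s}\le N\le\prod_{j=2}^k G_j$. Since any sub-tuple of an irredundant tuple is again irredundant, $(h_{k_1+1},\dotsc,h_s)$ is an irredundant generating set of $H''$ of size $s-k_1$. The inductive hypothesis, applied to the $k-1$ factors $G_2,\dotsc,G_k$ and the subgroup $H''$, produces subsets $H_i\subset H''\cap\prod_{j=i}^k G_j\subset H\cap\prod_{j=i}^k G_j$ for $i=2,\dotsc,k$ with $s-k_1=\sum_{i=2}^k|H_i|$ and $|H_i|\le m(T_i)$, $T_i=\pi_i(\genby{H_i})$. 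Combining the two pieces gives $|S|=s=\sum_{i=1}^k|H_i|$ and $\sum_{i=1}^k|H_i|\le\sum_{i=1}^k m(T_i)$, which completes the induction.

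The one point that needs care — and the reason for strengthening the statement before inducting — is that the ``tail'' $(h_{k_1+1},\dotsc,h_s)$ is an irredundant generating set of $H''$ but in general not a \emph{maximal} one, so \cref{lm16} as literally stated cannot be fed back into the recursion; one must carry an arbitrary irredundant generating set through the inductive hypothesis. Apart from this, the argument is routine bookkeeping with the first isomorphism theorem and the elementary observation that deleting elements from an irredundant generating tuple preserves irredundancy.
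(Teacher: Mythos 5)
Your proof is correct and follows essentially the same route as the paper: the paper likewise peels off one coordinate at a time, selecting a subset of the generating set whose first projection irredundantly generates $\pi_1(H)$ and multiplying the remaining generators by elements of its span so that they land in $\prod_{j \geq 2} G_j$ while remaining irredundant. The only difference is organizational: the paper runs this as a single iterative construction (choosing a minimal subset by hand rather than citing \cref{lm1}), whereas you package the same step as an induction on $k$ with the statement strengthened to arbitrary irredundant generating sets --- a point the paper handles implicitly, since its intermediate sets $K_i$ are irredundant but not of maximal size.
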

\begin{proof}
Suppose $K = \{h_1,\dotsc,h_m\}$ is an irredundant generating set of $H$.
Let $H_1$ be the smallest subset of $K_0 = K = \{h_1,\dotsc,h_m\}$ such that $\pi_1(H_1)$ generates $T_1 = \pi_1(H)$. Then $\pi_1(H_1)$ is an irredundant generating set of $T_1$, and so $|H_1| \leq m(T_1)$. Then there exists $g_i$ generated by $H_1$ such that $h_i' = g_ih_i$ with $i > r$ and $\pi_1(h_i') = 1$. As a result, $\genby{K_1} = \genby{\{h_i', i >r\}} \leq \prod\limits_{i = 2}^k G_i$. The set $K_1$ is irredundant because $K$ is irredundant. 

Now take $H_2$ to be the smallest subset of $K_1$ that generates $T_2 = \pi_2(\genby{K_1}) \leq G_2$. So $\pi_2(\genby{K_1}) = \pi_2(\genby{H_2})$. Then we will again have an irredundant set $K_2$ so that $\pi_1(K_2) = \pi_2(K_2) = 1$. We continue this process to build $H_i \subset K_{i-1}$ that generates $\pi_i(\genby{K_{i-1}}) \leq G_i$. From this, we will have an irredundant set $K_i$ that are built from remaining $h_i$: $K_i = \{t_jh_j, t_j \in \genby{X_1 \cup \dotsc \cup X_i}$ that fixes first $i$ factors or $\pi_1(K_i) = \dotsc = \pi_i(K_i) = 1 \}$. Since $K_i$ must be irredundant, $K_k$ must be an empty set. As a result, we have $m = \sum\limits_{i = 1}^k|X_i| \leq \sum\limits_{i = 1}^k m(\pi_i(\genby{H_i})) = \sum\limits_{i = 1}^k m(T_i)$. 
\end{proof}

Now using previous results, we will try to characterize elements that can be in an irredundant generating set of length $n - 3, n - 4$, or $n - 5$ in $S_n$

\begin{thm}\label{thm7}
For $k \leq 7$, there exists function $\Psi$ such that $m(H) = m \leq n -k$ for any transitive subgroup $H$ of $S_n$ that is not either $A_n$ or $S_n$ with $n \geq \Psi(k)$
\end{thm}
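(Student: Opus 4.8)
I would argue by induction on $k$, the case $k\le 4$ being exactly \cref{thm2} (with $\Psi(4)=25$), and pass from $k$ to $k+1$ (for $k+1\le 7$) by sharpening every case of the proof of \cref{thm2} using \cref{lm14}, \cref{lm15} and \cref{lm16}. So assume the statement for $k$ with threshold $\Psi(k)$, let $H\le S_n$ be transitive with $H\ne A_n,S_n$, and let $n$ be large (to be fixed). If $H$ is primitive, \cref{lm3} gives $m(H)\le n/2\le n-(k+1)$ once $n\ge 2(k+1)$ (and $n\ge n_0$), so we may assume $H$ is imprimitive, $H\le S_\Gamma\wr S_\Delta$ with $a=|\Gamma|\ge 2$, $b=|\Delta|\ge 2$, $ab=n$. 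Fix an irredundant generating set $\{g_1,\dots,g_l\}$ with $l=m(H)$ arranged as in \cref{def2} via \cref{lm1}: $g_1,\dots,g_j$ generate the (transitive) block action $B\le S_b$ and $K=\{g_{j+1},\dots,g_l\}$ fixes every block. Exactly as in the proof of \cref{thm2} one peels off, block by block, minimal subsets $X_1,\dots,X_b$ generating the induced action on $\Gamma_1,\dots,\Gamma_b$ of the successively corrected sets, so that $l-j=\sum_i|X_i|\le\sum_i m(X_i)$ while $j\le m(B)\le b-1$.

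Next I would dispose of the routine ranges. If $a=2$, then $H\le S_2\wr S_b$ and \cref{lm15} applies: feeding in the inductive bound $m(K)\le b-k$ for transitive proper subgroups $K$ of $S_b$ other than $A_b,S_b$ gives $m(H)\le 2b-(k+1)=n-(k+1)$ once $b\ge\max\{\Psi(k),k+2\}$. If $a\ge 3$ and some $X_i$ already generates $A_\Gamma$ or $S_\Gamma$ on $\Gamma_i$, then \cref{lm2} yields $m(H)\le a+2b-3$, i.e. $n-m(H)\ge(a-2)(b-1)+1\ge b$, which is $\ge k+1$ when $b\ge k+1$ and, when $b\le k$, is at least $a-1=n/b-1\ge n/k-1\ge k+1$ for $n$ large. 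If $a\ge 3$, no $X_i$ reaches $A_\Gamma/S_\Gamma$, and $b\ge k+1$, then each $m(X_i)\le a-2$ (an intransitive action on $a$ points, or — crudely — a transitive proper subgroup $\ne A_\Gamma$, for which Whiston and Cameron--Cara give $m\le a-3$), so $m(H)\le(b-1)+b(a-2)=n-b-1\le n-(k+2)$. In the remaining range $a\ge 3$, $3\le b\le k$, we have $a=n/b\ge n/k\ge\Psi(k)$ for $n$ large; if moreover every $X_i$ is transitive (hence a proper subgroup $\ne A_\Gamma,S_\Gamma$), the inductive hypothesis gives $m(X_i)\le a-k$, whence $m(H)\le(b-1)+b(a-k)=n-(k-1)b-1\le n-3(k-1)-1\le n-(k+1)$.

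The one genuinely delicate case — and the expected main obstacle — is $a\ge 3$, $3\le b\le k$ (so a bounded number of blocks, each of large size $a$) with some $X_i$ generating an intransitive subgroup of $S_\Gamma$ for which $m(X_i)\le a-2$ cannot be improved, i.e. $X_i$ generates some $S_U\times S_V$ with $|U|+|V|=a$. The naive estimate then only gives $m(H)\le(b-1)+b(a-2)=n-b-1$, which is too weak for $k\ge 4$ since $b\le k$. As in the proof of \cref{thm2} for $b=2$ (where one invokes \cref{lm6}), the fix is to exploit transitivity of the block action: conjugating the block-fixing generators by elements built from $g_1,\dots,g_j$ transports the $S_U\times S_V$–structure available on one block to the others, so that the relevant conjugation group $I$ has the $\mathbf{SP}$–property with respect to the partition into the $b$ blocks (each of size $a\ge 6$ for $n$ large). \cref{lm14} then shows that only $\omega(0,b)=O_k(1)$ of the block-fixing generators are needed together with one block's worth and the block-permuting generators; irredundancy of $\{g_1,\dots,g_l\}$ forces $l-j\le m(X_{i_0})+\omega(0,b)\le(a-2)+\omega(0,k)$, hence $m(H)\le(b-1)+(a-2)+\omega(0,k)$, and since $b\ge 3$ forces $a=n/b\le n/3$ we obtain $n-m(H)\ge 2n/3-O_k(1)\ge k+1$ for $n$ large. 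The hardest bookkeeping will be making the conjugation group $I$ genuinely available from $g_1,\dots,g_j$ and checking its $\mathbf{SP}$–property, and then tracking all the thresholds (from \cref{lm2}, \cref{lm3}, \cref{lm14}, \cref{lm15} and the inductive hypothesis) to extract an explicit admissible $\Psi(k+1)$. The restriction $k\le 7$ is precisely the range that the downstream characterization of irredundant generating sets of length $n-3$, $n-4$ and $n-5$ requires, so the induction need only be run that far.
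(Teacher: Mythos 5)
Your skeleton is the same as the paper's: induct on $k$, kill the primitive case with \cref{lm3}, the case $|\Gamma|=2$ with \cref{lm15}, the case where some block carries $A_\Gamma$ or $S_\Gamma$ with \cref{lm2}, the case of many blocks by the peel-off count from \cref{lm1}/\cref{lm16}, and isolate as the delicate case a bounded number of large blocks none of which carries $A_\Gamma$ or $S_\Gamma$. Those routine ranges are fine (apart from the omitted $b=2$ and the mixed transitive/intransitive block actions, which the same estimates do cover). But the delicate case, which is the real content of the theorem, is not handled correctly as you describe it.

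Two steps fail there. First, you assert that the only obstruction is a block on which the block-fixing generators induce exactly $S_U\times S_V$. That does not follow from ``$m(X_i)\le a-2$ cannot be improved'': the block projection $T_i$ can have $m(T_i)\ge|\Gamma|-3$ while being a parity-twisted subdirect product of $S_U\times S_V$, a product over three orbits, or have a small orbit of size $<\Psi(k-1)$ carrying an arbitrary transitive group. The paper needs a separate structural lemma (\cref{lm17}), proved by running the induction hypothesis \emph{inside a single block}, to show that any such $T_{i_0}$ has the $\mathbf{SP}$-property with respect to a partition of $\Gamma$ into at most three parts plus a small part $P_0$ with $|P_0|<\Psi(k-1)$; this is exactly why $\omega(|P_0|,q)$ with a possibly nonzero first argument appears, and why $\Psi(k)$ is defined recursively so that $|\Gamma|\ge\omega(|P_0|,q)+k$. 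Nothing in your proposal supplies this classification. Second, you apply \cref{lm14} with the partition of $\{1,\dots,n\}$ into the $b$ blocks and a conjugation group $I$ claimed to have the $\mathbf{SP}$-property with respect to that partition. Such an $I$ must meet every coset of a subgroup of index at most $2^b$ of $\prod_i S_{\Gamma_i}$; if $I$ is to come from $H$ this forces each block projection to contain $A_{\Gamma_i}$, i.e.\ precisely the case you have already excluded, and if $I$ is taken outside $H$ then the $\mathbf{K}$-property (closure under conjugation by $I$) of the subgroup generated by your selected generators cannot be verified, so \cref{lm14} yields no generation statement about $H$. The paper instead applies \cref{lm14} inside one copy of $S_\Gamma$: $I=\pi_1(K)$ on the first block, transported to the second block by a block-permuting element, with the partition $\{P_0,P_1,\dots,P_q\}$ of $\Gamma$ and $H=\pi_2(T)$; this saves generators only on the second block, so the remaining $b-2$ blocks still contribute up to $|\Gamma|-2$ each --- a term your count $l-j\le m(X_{i_0})+\omega(0,b)$ silently drops. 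The inequality still closes in the paper because $\omega(|P_0|,q)\le|\Gamma|-k$ by the choice of $\Psi$, not because the other blocks cost nothing; as written, your version of the key step would not go through.
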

\begin{proof}
Now we will define $\Psi(k)$ recursively:

$\Psi(1) = 1$.

$\Psi(k) =\max\{2\Psi(k-1) \cdot (k -1),\ (k + \max\limits_{\substack{x \leq \Psi(k-1) \\ y \leq 3}} \omega(x, y)) \cdot (k -1), n_0, k^2\}$ for $k \geq 2$, where $\omega$ is defined in \cref{lm14}

First, by \cref{lm3}, for $n \geq \Psi(k) \geq n_0$, $m(H) \leq \frac{n}{2} \leq n - k$ ($n \geq k^2 \geq 2k$) if $H$ is primitive subgroup of $S_n$. So we only need to handle the case when $H$ is an imprimitive but transitive subgroup of $S_n$. Therefore, we can assume that $H \leq S_{\Gamma} \wr S_{\Delta}$ ($n = |\Gamma||\Delta|, |\Gamma|, |\Delta| \geq 2$)

By \hyperref[lm1]{Whiston}'s lemma, if $\{g_1, g_2,\dotsc, g_m\}$ is an irredundant generating set of $H$, then we can asssume that $\{g_1,\dotsc,g_l\}$ generates block permutation while $\{g_{l+1},\dotsc,g_m\}$ is an irredundant set in $\prod\limits_{i = 1}^{|\Delta|}S_{\Gamma}$. So $l \leq |\Delta| - 1$.

We will prove that for $n \geq \Psi(k), m(H) = m \leq n -k$ by induction on $1 \leq k \in \mathbb{Z} \leq 9$. 

If $k = 1$, then it is true from Whiston's paper. Suppose that the statement is true for $k-1$, we prove that it is true for $k$ ($k \leq 7$)

Consider $n \geq \Psi(k)$. Suppose by contradiction that we have an imprimitive but transitive subgroup $H \leq S_{\Gamma} \wr S_{\Delta}$ so that $n = |\Gamma||\Delta|$, and $m(H) > n - k$. 

By \cref{lm16} for $K = \genby{\{g_{l+1},\dotsc, g_m\}} \leq \prod_{i = 1}^{|\Delta|}S_{\Gamma}$, there exists $K_i \subset \prod\limits_{i = 1}^k G_j \cap K$.  so that $m - l \leq \sum\limits_{j=i}^{|\Delta|} m(T_i)$ with $T_i = \pi_i(\genby{K_i}) \leq S_{\Gamma}$. Here $\pi_i$ is the projection from $K$ to $i$th copy of $S_{\Gamma}$ (defined in \cref{nt2})

If $K$ acts as $S_{\Gamma}$ or $A_{\Gamma}$ on $\Gamma_i$ for some $i \in \overline{1, |\Delta|}$. Then, by \cref{lm2}, we have $m(H) \leq |\Gamma| + 2 |\Delta| - 3 = n - (|\Gamma| - 2)(|\Delta| - 1) - 1 \leq n - (k - 1) - 1 = n - k$ for $n = |\Gamma||\Delta| \geq k^2 - 1$ and $|\Gamma| > 2$.

If  $|\Gamma| = 2$ then $|\Delta| \geq \Psi(k-1)$, and by \cref{lm15}, $m(H) \leq n - k$.

Therefore, we can suppose that $K$ cannot acts as $S_{\Gamma}$ or $A_{\Gamma}$ on $\Gamma_i$, or $\pi_i(K) \neq S_{\Gamma}$ or $A_{\Gamma} \ \forall i \in \overline{1, |\Delta|}$. Then $T_i \leq \pi_i(K) \neq S_{\Gamma}$, and, therefore, $m(T_i) \leq |\Gamma| - 2$ because of the strongly flatness of $S_{\Gamma}$.

As a result, $m(H) = m = l + m - l \leq |\Delta| - 1 + |\Delta|(|\Gamma| - 2) = n - |\Delta| - 1 \leq n - k$ if $|\Delta| \geq k - 1$. So we must have $2 \leq |\Delta| < k - 1$. As a result, $|\Gamma| \geq 2\Psi(k - 1)$, and $|\Gamma| \geq \max\limits_{\substack{x \leq \Psi(k-1) \\ y \leq 3}} \omega(x, y) + k$

\vspace{2mm}

\begin{lem}\label{lm17}
For $i \in \overline{1, |\Delta|}$, $m(T_i) \leq |\Gamma| - 4$ or $T_{i}$, and therefore, $\pi_{i}(K)$ has $\mathbf{SP}$-property (defined in \cref{def9}) wrt some partition $\mathbf{P} = \{P_0, P_1,\dotsc, P_k\}$ of $\zeta = \{1, 2, \dotsc, |\Gamma| \}$ with $|P_0| < \Psi(k - 1)$ and $k \leq 3$.
\end{lem}
\begin{proof}
Suppose that for some $i_0 \in \overline{1, |\Delta|}$, $m(T_{i_0}) = t \geq |\Gamma| - 3$, and we have an irredundant generating set $U_0 = \{u_1, u_2, \dotsc, u_t\}$ of $T_{i_0}$. 

Start with an empty collection of disjoint subsets of $\zeta = \{1, 2, \dotsc, |\Gamma|\}$, $\chi$. There exists a partition $\mathbf{Q_1} = \{R_1, R_2, \dotsc, R_{k_1}\}$ of $\zeta$ so that $U_0$ and $T_{i_0}$ are subsets of $\prod\limits_{i = 1}^{k_1} S_{R_i}$, and $T_{i_0}$ acts transitively on each $R_i$. By \cref{lm16}, $m(T_{i_0}) \leq |\Gamma| - k_1$, and so $k_1 \leq 3$. WLOG, assume that $|R_1| = \min\limits_{i \in \overline{1, k_1}}|R_i|$, and add $R_1 = L_1$ to $\chi$. Moreover, assume that $V_1 = \{u_1, \dotsc, u_r\}$ ($r \leq t$) so that $\pi_1(V_1)$ generates $\pi_1(U_0)$, where $\pi_1: \prod\limits_{i = 1}^{k_1} S_{R_i} \to S_{R_1}$ is the projection map defined in \cref{nt2}. Then there exists an irredundant set $U_1 = \{g_iu_i\}_{i > r} \subset S_{\zeta \setminus R_1}$ with $g_i$ generated by $\{u_1, \dotsc, u_r\}$. 

If $k_1 \geq 2$, there exists a partition $\mathbf{Q_2} = \{R'_1, R'_2, \dotsc, R'_{k_2}\}$ of $\zeta \setminus L_1$ so that $U_1 \subset \prod\limits_{i = 1}^{k_2} S_{R'_i}$, and the subgroup generated by $U_1$ acts transitively on each $R_i$. Assume again that $R'_1$ has the smallest size among $R'_i$, and add $R'_1 = L_2$ to $\chi$. If $k_2 \geq 2$, continue this process as before to get $V_2 \subset U_1$ that generates $\pi_1(U_1)$, where $\pi_1: \prod\limits_{i = 1}^{k_1} S_{R'_i} \to S_{R'_1}$ is the projection map defined in \cref{nt2}, and an irredundant set $U_2 \subset S_{\zeta \setminus (L_1 \cup L_2)}$. We will continue this process as long as $k_i \geq 2$ in step $i$.

The process cannot reach $L_4$ because otherwise, $m(T_{i_0}) \leq \sum\limits_{i = 1}^3 |V_i| + m(\genby{U_4}) \leq \sum\limits_{i = 1}^3 (|L_i| - 1) + |\zeta \setminus \cup_{i = 1}^3 L_i| - 1 = n - 4$. Contradiction. 

As a result, we get the partition $\chi = \{L_1, \dotsc, L_u \}$ of $\zeta$ so that $U_i \subset \prod_{j = i}^u S_{L_j}$ is an irredundant set, and the group generated by $U_i$  acts transitively on $L_i$.  Moreover, $m(T_{i_0}) \leq \sum\limits_{i = 1}^j m(\pi_i(U_i))$, where $\pi_i: \prod_{j = 1}^u S_{L_j} \to S_{L_i}$ is the projection map defined in \cref{nt2}. From the choice of $L_i$, we also get $|L_u| \geq \dfrac{|\Gamma|}{2^{u - 1}}$

We have $m(T_{i_0}) \leq \sum\limits_{i = 1}^u (|L_i| - 1) = |\Gamma| - u$. So $u \leq 3$.

If $u = 3$, then $|\Gamma| - 3 \leq m(T_{i_0}) \leq  \sum\limits_{i = 1}^3 m(\pi_i(U_i)) \leq \sum\limits_{i = 1}^3 (|L_i| - 1) = |\Gamma| - 3$. Hence the equality holds, and $\pi_i(U_i) = S_{L_i}$ and therefore, $S_{L_1} \times S_{L_2} \times S_{L_3} \subset T_{i_0}$, and so $T_{i_0}$ has $\mathbf{SP}$-property wrt the partition $\{P_0, L_1, L_2, L_3\}$, where $P_0$ is an empty set.

\vspace{2mm}
If $u = 2$, then $|L_2| \geq \dfrac{|\Gamma|}{2^{u - 1}} \geq \Psi(k - 1)$. By induction hypothesis on $k - 1$ for $U_2 \leq S_{L_2}$, we must have $A_{L_2} \subset U_2$ because otherwise, $m(\pi_2(U_2)) \leq  |L_2| - k$, and so $m(T_{i_0}) \leq |\Gamma| - k$. As a result, $m \leq |\Delta| - 1 + |\Gamma| - k + (|\Delta| - 1)(|\Gamma| - 2) = n - k + 1 - |\Delta| < n - k$. Contradiction. If $|L_1| \geq \Psi(k - 1)$, then we also have $A_{L_1} \leq \pi_1(U_1)$, and therefore $T_{i_0}$ satisfies $\mathbf{SP}$- property wrt the partition $\{P_0, L_1, L_2\}$, where $P_0$ is empty. Otherwise, $T_{i_0}$ satisfies $\mathbf{SP}$- property wrt the partition $\{P_0, L_2\}$, where $P_0 = L_1$ and has size less than $\Psi(k - 1)$.
\end{proof}

Now come back to the proof of \cref{thm7}. 

If $m(T_i) \leq |\Gamma| - 4 \ \forall i \in \overline{1, |\Gamma|}$, then $m \leq |\Delta| - 1 + |\Delta|(|\Gamma| - 4) = n - 3|\Delta| - 1 \leq n - 7 \leq n - k$. 

Otherwise, by \cref{lm17}, there is some $T_{i_0}$ that has $\mathbf{SP}$-property wrt some partition of $\mathbf{P} = \{P_0, P_1, \cdots, P_q\}$ of $\{1, 2, \dotsc, |\Gamma|\}$ ($q \leq 3$)

WLOG, suppose that $i_0 = 1$. Suppose that $T_1 = \{g_{l + 1},\dotsc, g_r\}$ such that $\{\pi_1(g_t)\}_{t = l+ 1}^r$ generates $G = \pi_1(K)$.

Suppose further that $T = \{h_ig_i,\ i > r,\ h_i \text{ generated by } T_1 \}$ is the set that fixes $\Gamma_1$ pointwise. 

By \cref{lm14}, there exists $T' \subset T$ with $|T'| \leq \omega(|P_0|, q)$ so that any subgroup $G$ with $\mathbf{K}$-property wrt $\mathbf{P}$ and $G$ that contains $\pi_2(T')$ also contains $\pi_2(T) \subset S_{\Gamma}$.
WLOG, suppose $T' = \{h_ig_i\}_{i = r+1}^s$, with $S \leq \omega(|P_0|, q)$.

There exist $\gamma$ generated by $\{g_1, \dotsc, g_l\}$ that moves block $1$ to block $2$. Then $\pi_2(\gamma g \gamma^{-1}) = x\pi_1(g)x^{-1}\ \forall g\in H $. By renaming elements in the second copy of $\Gamma$, $\Gamma_2$, we can assume that $x = 1$, and therefore, $\pi_2(\gamma g \gamma^{-1}) = \pi_1(g)$.

\vspace{2mm}
Now consider the group $M = N \cap \prod\limits_{i=1}^{|\Delta|} S_{\Gamma_i}$, where $N = \genby{\{g_1,\dotsc,g_s\}}$. 
\vspace{2mm}

Now suppose $\pi_i: \prod\limits_{i = 1}^{|\Delta|} S_{\Gamma_i} \to S_{\Gamma_i}$ is the projection map defined in \cref{nt2}.

\vspace{2mm}
From our previous observation, if $g = \pi_1(t_g) \in G$ for some $t_g \in T_1$, then $\pi_2(m_g) = \pi_2(\gamma t_g \gamma^{-1}) = \pi_1(t_g) = g$ ($m_g \in \prod\limits_{i = 1}^{|\Delta|}S_{\Gamma}$). Therefore, for $m \in M$ with $\pi_2(m) = t$, $m_g m m_g^{-1}$ also in $M$, and $\pi_2(m_gm m_g^{-1}) = gtg^{-1}$. Therefore $\pi_2(M)$ has $\mathbf{K}$-property wrt $\mathbf{P}$ and $G$. $\pi_2(M)$ contains $\pi_2(T')$, and, therefore, $\pi_2(M)$ must contain $\pi_2(T)$.

As a result, there exists $T_2 = \{h_tg_t, t > s\}$ that fixes both $\Gamma_1$ and $\Gamma_2$ pointwise, in which $h_t$ generated by $\{g_1,..,g_s\}\}$. 

By a similar argument as before, we get $m(H) = m = l + r-l + s- r + m-s \leq |\Delta| - 1 + |\Gamma| - 1 + \omega(|P_0|, q) + (|\Delta| - 2)(|\Gamma| - 2)$. 

Since $\omega(|P_0|, q) \leq \max\limits_{\substack{x \leq \Psi(k-1) \\ y \leq 3}} \omega(x, y) \leq |\Gamma| - k$, $m(H) \leq |\Delta| - 1 + |\Gamma| - 1 +|\Gamma| - k + (|\Delta| - 2)(|\Gamma| - 2) = n - |\Delta| - k + 2 \leq n - k$. Contradiction.

As a result, $m(H) \leq n -k$, and we finish the proof by induction on $k$.
\end{proof}

\begin{defi}
For a cycle $c = (u_1u_2 \dotsc u_s) \in S_n$ with $u_i \in \{1, 2, \dotsc, n\}$, we define $d(c) = s - 1$. For $x \in S_n$, we write the cycle decomposition of $x = c_1.c_2 \dotsc c_k$, where $c_i$ is one cycle, and then define $d(x) = \sum\limits_{i = 1}^k c_i$. 
\end{defi}
\begin{thm}\label{thm8}
Given a positive integer $k \leq 5$ and $n \in \mathbb{N}$ such that $n \geq \Psi(k + 2)$, where $\Psi$ is the function defined in \cref{thm7}. Suppose that $\{h_1, h_2,\dotsc, h_{n -k}\} $ is an irredundant generating set of $S_n$ of length $n - k$. Then $d(h_i) \leq k + 1$. Moreover, if $d(h_i) = k+1$, then $h_i$ must be an even permutation.
\end{thm}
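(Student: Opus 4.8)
The plan is to run, separately for each index $i$, a forest‑counting argument in the spirit of the graph $Gr_o$ used for the length $n-2$ classification, but now feeding in the whole cycle decomposition of $h_i$ rather than a single transposition.

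First I would fix the structure. For each $i$ put $G_i=\genby{\{h_j\}_{j\ne i}}$; since the tuple is irredundant, $\{h_j\}_{j\ne i}$ is an irredundant generating set of $G_i$ of size $n-k-1$, so $m(G_i)\ge n-k-1$ and $G_i$ is a proper subgroup. Because $k\le 5$ we have $k+2\le 7$ and $n\ge\Psi(k+2)$, so \cref{thm7} applies: every transitive proper subgroup of $S_n$ other than $A_n$ has $m\le n-(k+2)<n-k-1$. Hence each $G_i$ is either $A_n$ or intransitive, and a one‑line parity check shows at most one $G_i$ can equal $A_n$ (if $G_i=G_j=A_n$ with $i\ne j$ then $h_j\in G_i=A_n$ is even, contradicting that $\genby{G_j,h_j}=S_n$ forces $h_j$ odd). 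So either every $G_i$ is intransitive (\emph{Case A}), or exactly one, say $G_1$, equals $A_n$ (\emph{Case B}), in which case $h_1$ is odd while $h_2,\dots,h_{n-k}\in G_1=A_n$ are all even.

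Next I would rebuild the forest. Let $J$ be the set of indices $j$ with $G_j$ intransitive, so $|J|=n-k$ in Case A and $|J|=n-k-1$ in Case B. Exactly as in the paper, build $Gr_o$ on vertex set $\{1,\dots,n\}$: for $j\in J$ choose $x_j,y_j$ with $h_j(x_j)=y_j$ lying in distinct $G_j$‑orbits (these exist because $\genby{G_j,h_j}$ is transitive) and add the edge $E(h_j)=\{x_j,y_j\}$. The paper's argument shows $Gr_o$ is a forest, and the $E(h_j)$ are pairwise distinct, since $E(h_j)=E(h_{j'})$ with $j\ne j'$ would make $h_{j'}\in G_j$ move $x_j$ to $y_j$; thus $Gr_o$ has exactly $|J|$ edges. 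Now fix $i$, decompose $h_i$ into disjoint cycles, and replace each cycle $(u_1,\dots,u_\ell)$ by the path edges $\{u_1,u_2\},\dots,\{u_{\ell-1},u_\ell\}$; this yields a set $P(h_i)$ of exactly $d(h_i)$ edges, itself a forest, and when $i\in J$ I may rewrite the cycle through $x_i$ so that $x_i$ precedes $y_i$, forcing $E(h_i)\in P(h_i)$. The core claim is that $Gr_o\cup P(h_i)$ is again a forest: a cycle in it cannot use an edge $E(h_j)$ with $j\in J\setminus\{i\}$, since travelling the cycle the other way would join $x_j$ to $y_j$ through edges each of the form $\{a,b\}$ with $h_m(a)=b$ for some $m\ne j$ (hence $h_m\in G_j$), putting $x_j,y_j$ in one $G_j$‑orbit, a contradiction; so any cycle would lie inside $P(h_i)$, which has none. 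The same orbit remark shows $Gr_o$ and $P(h_i)$ overlap in precisely $E(h_i)$ when $i\in J$ and not at all when $i\notin J$. Counting the distinct edges of the forest $Gr_o\cup P(h_i)$ then gives $|J|+d(h_i)-[\,i\in J\,]\le n-1$; in Case A this reads $d(h_i)\le k$, and in Case B it reads $d(h_i)\le k+1$ for $i\ge 2$ and $d(h_1)\le k$. Hence $d(h_i)\le k+1$ always, and if equality holds we must be in Case B with $i\ge 2$, so $h_i\in A_n$ is even — exactly the asserted ``moreover''.

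I expect the only real friction to be combinatorial bookkeeping: arranging $E(h_i)$ to be a genuine path edge of $P(h_i)$ in the degenerate cases (a transposition, or a $2$‑cycle), so that the overlap of $Gr_o$ with $P(h_i)$ is exactly one edge; and making the ``no cycle'' step airtight when a path edge of $P(h_i)$ coincides with some $E(h_j)$ — this is still the orbit argument, but one must check that the edge $E(h_j)$ being walked around is itself not among the colliding $P(h_i)$ edges. No input beyond \cref{thm7} and the forest argument is needed.
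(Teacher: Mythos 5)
Your proposal is correct and follows essentially the same route as the paper: use \cref{thm7} to force each $G_i$ to be $A_n$ (at most once) or intransitive, build the orbit-separating edge graph, adjoin the path edges coming from the cycle decomposition of $h_i$, show the union is a forest via the orbit argument, and count edges. You spell out bookkeeping the paper leaves implicit (distinctness of the $E(h_j)$, rotating the cycle so $E(h_i)\in P(h_i)$, and that no path edge of $h_i$ can coincide with $E(h_j)$ for $j\ne i$), but the argument is the same.
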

\begin{proof}
Let $G_i$ be the proper subgroup of $S_n$ generated by $\{h_i\}_{i \neq j}$. Then, by \cref{thm7}, if $G_i \neq A_n$, then $G_i$ has to be intransitive, because otherwise, $G_i$ will have $m$ less than $n - k - 1$. Note that there is at most one $i$ so that $G_i = A_n$.

Now consider a graph $\Omega$ on the set of vertices $\Sigma = \{1, 2, \dotsc, n\}$. For each $i \in \overline{1, n - k}$ such that $G_i$ is intransitive, there exists $x$ and $y$ in $\Sigma$ so that $h_i$ maps $x$ to $y$ such that $x$ and $y$ are not in the same $G_i$-orbit. Such $x, y$ exists because if $x, y$ are in the same $G_i$-orbit whenever $h_i$ maps $x$ to $y$, then the orbits of $G$ are the same as the orbits of $G_i$. Contradiction since $G_i$ is transitive while $G$ is not. 

Now take $h_{i_1}$ with any $i_1 \in \overline{1, n - k}$, write the cycle decomposition of $h_{i_1}$, and represent each cycle $(u_1, u_2, \dotsc, u_k)$ in terms of product of $k-1$ transpositions $(u_{t-1}, u_t)$. We add edges that connect $u_{t -1}$ to $u_t \in \Sigma$ to the graph $\Omega$ to create new graph $\Omega'$.

Let $E(Gr)$ be the number of edges in a graph $Gr$.

If there is $h_{i_0}$ so that $G_{i_0} = 1$, then $E(\Omega) = n - k - 1$, and 

\[ E(\Omega') =  
\left\{
\begin{array}{ll}
n - k - 1 + d(h_{i_1}) - 1 \text{ if } i_1 \neq i_0 \\
n - k - 1 + d(h_{i_1}) \text{ if } i_1 = i_0 \\
\end{array}
\right.
\]

Note that in this case, $h_i$ with $i \neq i_0$ will be in $A_n$ and is therefore, an even permutation.

If there is no $h_{i_0}$ so that $G_{i_0} = 1$, then $E(\Omega) = n - k$, and $E(\Omega') = n - k + d(h_{i_1}) - 1$.

Therefore, $E(\Omega') \geq n - k + d(h_{i_1}) - 2$, and the equality holds only if $i_1 \neq i_0$, and $h_{i_1} \in A_n$.

Now we prove that $\Omega'$ is a forest. Suppose that there is a cycle in $\Omega'$. If Therefore, edges that can be in this cycle are edges $(u_{i-1}, u_i)$ that correponds to the cycle decomposition of $H$. But clearly, the edges are union of straight lines with $k-1$ edges $(u_{i-1}, u_i)$ that correspond to cycles $(u_1, u_2, \dotsc, u_k)$. Therefore, they cannot create a cycle. 

As a result, $\Omega'$ is a forest, and $E(\Omega') \leq n - 1$. Therefore, either $n - k + d(h_{i_1}) - 1 \leq n - 1$ or $n - k + d(h_{i_1}) - 2 \leq n - 1$, and $h_{i_1}  \in A_n$. In other word, $d(h_{i_1}) \leq k$ or $d(h_{i_1}) = k + 1$ and $h_{i_1}$ must be an even permutation.
\end{proof}

\begin{lem}\label{lm18}
Given $n \in \mathbb{N} \geq 3k + 3$. Suppose that $x \in S_n$ such that $d(x) \leq k$ or $x \in A_n$ and $d(x) = k + 1$, with $k$ odd. Then there exists an irredundant generating set of $S_n$ of length $n - k$ that contains $x$.
\end{lem}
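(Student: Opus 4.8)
The plan is to exhibit $H$ explicitly as transpositions (and a few short products of transpositions) attached to a carefully chosen spanning tree, with $x$ realized as a product of tree-edge transpositions; one may assume $d(x)\ge 1$, since no irredundant set contains the identity. First I would replace each cycle $(p_0\,p_1\,\dots\,p_t)$ of $x$ by $(p_0p_1)(p_1p_2)\cdots(p_{t-1}p_t)$: the transpositions occurring define a forest $F_x$ on $\Sigma=\{1,\dots,n\}$ with exactly $d:=d(x)$ edges, whose non-trivial components are paths (one per cycle of $x$), and $x$ is the product of the corresponding transpositions $\sigma_e$, $e\in F_x$, in the induced order. Since $x$ moves at most $2d\le 2k+2<n$ points, $F_x$ uses few vertices and at least $k+1$ points of $\Sigma$ are fixed by $x$. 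I would extend $F_x$ to a spanning tree $T$ of the complete graph on $\Sigma$ chosen so that every path of $F_x$ is met by a tree edge outside $F_x$ whose far endpoint is fixed by $x$ (route all these edges to one hub vertex, then string the remaining vertices along); $n\ge 3k+3$ leaves room for this and for the auxiliary edges below. If $d\le k$, inside $E(T)\setminus F_x$ I would pick $k-d$ pairwise disjoint ``cherries'' $\{g,g'\}$ (two tree edges sharing a vertex), each with a private third neighbouring tree edge, and set $H=\{x\}\cup\{\sigma_e: e\in E(T)\setminus F_x\text{ lying in no cherry}\}\cup\{\sigma_g\sigma_{g'}:\{g,g'\}\text{ a chosen cherry}\}$, of size $1+(n-1-d-2(k-d))+(k-d)=n-k$. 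If instead $d=k+1$ (so $k$ is odd and $x\in A_n$), I would distinguish one edge $s$ of $F_x$, enumerate as $g_1,\dots,g_{n-k-2}$ the tree edges lying neither in $F_x$ nor equal to $s$, and set $H=\{\sigma_s\}\cup\{x\}\cup\{\sigma_s\sigma_{g_i}:1\le i\le n-k-2\}$, again of size $n-k$: this is a variant of the $A_n$-construction behind \cref{thm3} with one extra, odd, generator $\sigma_s$.

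To prove $\genby{H}=S_n$ I would use two elementary facts: an $m$-cycle together with a transposition meeting its support in exactly one point generates the full symmetric group on those $m+1$ points, and an $m$-cycle together with a transposition joining two adjacent points of the cycle generates $S_m$. Conjugating such a transposition by powers of $x$ then recovers, one path of $F_x$ at a time, every $\sigma_e$ with $e\in F_x$ (using $\sigma_s$ itself when $s$ lies on the path, and otherwise the tree edge from that path to the hub); likewise each $3$-cycle $\sigma_g\sigma_{g'}$ together with its private neighbour recovers $\sigma_g$ and $\sigma_{g'}$. Hence $\genby{H}$ contains $\sigma_e$ for every edge $e$ of $T$, so $\genby{H}=S_n$; in the case $d=k+1$ this really is all of $S_n$ and not just $A_n$ precisely because $\sigma_s\in H$ is odd.

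For irredundancy I would delete each element and check the rest generate a proper subgroup, by a disconnection argument. Deleting $x$ removes the $d$ edges of $F_x$ from play, so the remaining transpositions and cherry products all lie in (the symmetric groups on) the $d+1\ge 2$ components of $T\setminus F_x$. Deleting a plain transposition $\sigma_e$ confines everything to the two components of $T\setminus\{e\}$ --- here one uses that a cherry $\{g,g'\}$ is glued at a single vertex (so $\sigma_g\sigma_{g'}$ stays on one side) and that $F_x\setminus\{e\}\subseteq T\setminus\{e\}$ (so $x$ splits accordingly). Deleting a cherry product $\sigma_g\sigma_{g'}$ confines the rest to the three components of $T\setminus\{g,g'\}$, and deleting $\sigma_s\sigma_{g_i}$ confines it to the two components of $T\setminus\{g_i\}$ (since $F_x$ and $s$ avoid $g_i$). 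Finally, deleting $\sigma_s$ when $d=k+1$ leaves only even permutations, so the residual group lies in $A_n$. Every residual group is proper, so $H$ is irredundant.

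The subtle case, and the one I expect to be the main obstacle, is $d(x)=k+1$. The naive attempt of $x$ together with a spanning tree of transpositions yields only $n-k-1$ elements, and the obvious remedy of inserting an extra transposition (a chord) fails: when $x$ is a single long cycle, $x$ together with any transposition meeting its support already generates the full symmetric group there, making the chord redundant. The resolution must be subtler --- the compensating generator has to be a transposition $\sigma_s$ that is simultaneously a factor of $x$ and a stand-alone member of $H$, so that its removal drops the remaining (all even) set into $A_n$ while the count stays at $n-k$. Arranging the tree $T$, the choice of $s\in F_x$, and the cycle decomposition of $x$ to be mutually compatible, and making the disconnection arguments above go through with $\sigma_s$ locked inside $x$, is where the real care lies; the hypothesis $n\ge 3k+3$ is exactly what allows the paths of $F_x$, their hub edges, and (when $d<k$) the cherries and their neighbours to be fitted disjointly into $T$.
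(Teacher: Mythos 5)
Your construction is correct, and it is the same kind of argument as the paper's: an explicit witness set built from the cycle structure of $x$, auxiliary generators supported on a spanning structure of transpositions, a device to absorb the slack $k-d(x)$, the parity trick (a distinguished transposition that is simultaneously a factor of $x$ and a stand-alone generator) for the case $d(x)=k+1$, and irredundancy via disconnection/parity. The differences are in the bookkeeping. The paper first renames points so that the nontrivial cycles of $x$ are the consecutive blocks $(d_i+1,\dotsc,d_{i+1})$; its set then consists of $x$, the linking transpositions $(d_{i+1},d_{i+1}+1)$, consecutive transpositions $(t,t+1)$, and one long cycle $(l,l+1,\dotsc,n)$ whose length is tuned so the count is exactly $n-k$ — the long cycle is the slack absorber, and irredundancy is immediate (``cannot map $u$ to $u+1$''). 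You instead keep every auxiliary generator a transposition on a spanning tree and absorb the slack with $k-d$ cherries (3-cycles), which forces the extra steps of arranging the cherries, hub edges and private neighbours inside $T$ (asserted rather than constructed in detail, though clearly feasible under $n\ge 3k+3$) and of recovering the cherry transpositions before running the conjugation-by-powers-of-$x$ argument; your tree/hub formulation is more flexible but strictly more work than the paper's renaming normalization. In the case $d(x)=k+1$ your $\{\sigma_s, x, \sigma_s\sigma_{g_i}\}$ is essentially identical to the paper's $\{(1,2), x, (1,2)(d_{i+1},d_{i+1}+1), (1,2)(t,t+1)\}$, so the ``subtle case'' you flag is resolved exactly as in the paper. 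One point both treatments leave implicit, which you handle explicitly: $x$ must be nontrivial, since no irredundant set contains the identity.
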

\begin{proof}
By renaming elements of $\Sigma = \{1, 2,\dotsc, n\}$, we can assume that $x = \prod\limits_{i = 1}^m c_i$, where $c_i$ is the cycle $(d_i + 1, d_i + 2,\dotsc, d_{i+ 1})$, where $d_1 = 0$, and $d(x) = \sum\limits_{i = 1}^m (d_{i+1} - d_i - 1) = d_{m + 1} - m$. Here 

Suppose that $d(x) \leq k$.

Now consider the set $H$ that includes $x$, $m - 1$ transpositions $(d_{i + 1}, d_{i+1} + 1)$ for $i = \overline{1, m - 1}$, $l - d_{m + 1}$ other transpositions $\{(t, t+1)\}_{t = d_{m+1}}^{l - 1}$, and finally a cycle $(l, l + 1,\dotsc, n)$ with $n - k \leq l = n - k + d(x) - 1 \leq n - 1$.

Note that $d_{m + 1} = m + d(x) \leq 2 d(x) \leq 2(k + 1) \leq n - k - 1 < l$, so we have at least $1$ transposition of the form $\{(t, t+1)\}_{t = d_{m+1}}^{l - 1}$.

Then $H$ has $1 + m - 1 + l - d_{m + 1} + 1 = l + 1 - d(x) = n - k$ elements.

If we remove one transposition $(u, u + 1)$ in $H$ or the cycle $(l, l + 1,\dotsc, n)$, then the subgroup generated by the remaning elements cannot map $u$ to $u + 1$ or $l$ to $l + 1$. If we remove $x$ then it cannot even map $1$ to $2$. Therefore, $H$ is irredundant. 

Because $x$ and the first $m - 1$ transpositions $(d_{i + 1}, d_{i+1} + 1)$ generate a single cycle that includes all elements from $1$ to $d_{m + 1}$, and, therefore, together with the transposition $(d_{m + 1}, d_m)$ will generates the whole $S_{\{1, 2,.., d_{m + 1}\}}$. As a result, $H$ will generate $S_n$, and so $H$ an irredundant generating set of length $n - k$ of $S_n$ that contains $x$.

Now for even permutation $x$ with $d(x) = k + 1$, we consider the set $K$ that includes $x$, the transposition $(1, 2)$, $m - 1$ products of $2$ transpositions $(1, 2) \cdot (d_{i + 1}, d_{i+1} + 1)$ for $i = \overline{1, m - 1}$, $n - d_{m + 1} = n - m - k - 1$ other products of $2$ transpositions $\{(1, 2) \cdot (t, t+1)\}_{t = d_{m+1}}^{n - 1}$

By a similar argument, $K$ is also an irredundant generating set of length $n - k$ of $S_n$ that contains $x$, and we finish the proof for \cref{lm18}.
\end{proof}

\begin{cor}
Given a positive integer $1 \leq k \leq 5$ and a natural number $n$ so that $n \geq \max\{\Psi(k + 2), 3k + 3\}$, where $\Psi$ is the function defined in \cref{thm7}. Then $\iota_{n -k}(S_n) = \{x \in S_n: d(x) \leq k\} \cup \{ \text{even permutation } x: d(x) = k + 1,\ k \text{ odd}\}$. Here $\iota_m(G)$ is the set of elements in a finite group $G$ that lies in some irredundant generating set of length $m$ of $G$.
\end{cor}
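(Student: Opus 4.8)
The plan is to read the corollary off \cref{thm8} and \cref{lm18} directly: the former furnishes the inclusion $\iota_{n-k}(S_n)\subseteq(\text{RHS})$ and the latter furnishes the reverse inclusion, and the only real work is an elementary parity observation that glues the two together.

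First I would prove $\iota_{n-k}(S_n)\subseteq \{x\in S_n:d(x)\le k\}\cup\{\text{even }x:d(x)=k+1,\ k\text{ odd}\}$. Let $x\in\iota_{n-k}(S_n)$, so $x=h_i$ for some irredundant generating set $\{h_1,\dotsc,h_{n-k}\}$ of $S_n$. Since $k\le 5$ and $n\ge\Psi(k+2)$, \cref{thm8} applies and yields $d(x)\le k+1$, with $x$ an even permutation whenever $d(x)=k+1$. Now write the cycle decomposition $x=c_1\cdots c_m$; each cycle $c_j$ of length $\ell_j$ is a product of $\ell_j-1=d(c_j)$ transpositions, so $x$ is a product of $d(x)=\sum_j d(c_j)$ transpositions and hence $\operatorname{sgn}(x)=(-1)^{d(x)}$. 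Consequently, if $k$ were even and $d(x)=k+1$, then $d(x)$ would be odd, making $x$ an odd permutation and contradicting \cref{thm8}. Therefore either $d(x)\le k$, or $d(x)=k+1$ with $k$ odd and $x$ even, which is exactly membership in the right-hand set.

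Next I would prove the reverse inclusion. Suppose $x$ lies in the right-hand set. If $d(x)=k+1$ with $k$ odd and $x$ even, then $x\in A_n$; since $n\ge 3k+3$, \cref{lm18} produces an irredundant generating set of $S_n$ of length $n-k$ containing $x$. If instead $d(x)\le k$, the same lemma applies directly. In either case $x\in\iota_{n-k}(S_n)$. Combining the two inclusions gives the claimed equality.

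I do not expect a genuine obstacle: both directions are already packaged as \cref{thm8} and \cref{lm18}. The only points needing care are the parity bookkeeping—namely that ``$d(x)=k+1$ forces $x$ even'' is incompatible with ``$k$ even,'' and that the even branch of the right-hand set is precisely the $x\in A_n$ case to which \cref{lm18} applies—and checking that the hypothesis $n\ge\max\{\Psi(k+2),3k+3\}$ simultaneously satisfies the range requirements of both results, which it does by construction.
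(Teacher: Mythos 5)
Your proposal is correct and matches the paper's proof, which likewise deduces the corollary directly from \cref{thm8} (for the inclusion $\iota_{n-k}(S_n)$ into the right-hand set) and \cref{lm18} (for the reverse inclusion). The only addition is that you spell out the parity observation $\operatorname{sgn}(x)=(-1)^{d(x)}$ showing the $d(x)=k+1$ case forces $k$ odd, which the paper leaves implicit; this is a welcome clarification but not a different argument.
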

\begin{proof}
This follows from \cref{thm8} and \cref{lm18}.
\end{proof}

\section{Dimension-like invariants of wreath product}
\subsection{Computing $m$ of wreath product of simple group and $2$-transitive permutation group}

\begin{lem}(Goursat's lemma for simple group)\label{lm19}
Let $S$ be a simple group, and $K$ be a subdirect product of $S^2$. Then either $K = S^2$ or $K = \{(s, \phi(s)), s \in S \}$, for some automorphism $\phi$ of $S$.
\end{lem}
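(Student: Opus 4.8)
The plan is to study $K$ via the two ``partial kernels'' $A=\{s\in S:(s,1)\in K\}$ and $B=\{s\in S:(1,s)\in K\}$, and to show each is normal in $S$, hence trivial or all of $S$ by simplicity. First I would verify $A\trianglelefteq S$: given $s\in A$ and $t\in S$, surjectivity of the first projection $\pi_1\colon K\to S$ (part of the definition of a subdirect product) furnishes some $(t,u)\in K$, and then $(t,u)(s,1)(t,u)^{-1}=(tst^{-1},1)\in K$, so $tst^{-1}\in A$. The same argument with the roles of the two coordinates swapped gives $B\trianglelefteq S$.

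Next I would case on $A$. If $A=S$, then $S\times\{1\}\subseteq K$, and for an arbitrary $(s,t)\in S\times S$ surjectivity of $\pi_2$ yields $(s',t)\in K$ for some $s'$, so $(s(s')^{-1},1)\cdot(s',t)=(s,t)\in K$; hence $K=S^2$ and this case is finished. It remains to handle $A=\{1\}$. Then $\ker(\pi_2|_K)=K\cap(S\times\{1\})=\{1\}$, so $\pi_2|_K\colon K\to S$ is injective, and being surjective it is an isomorphism.

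In the case $A=\{1\}$ I would next rule out $B=S$: if $B=S$ then by the argument symmetric to the one above $K=S^2$, which forces $A=S$, contradicting $A=\{1\}$. Hence $B=\{1\}$, so $\ker(\pi_1|_K)=\{1\}$ and $\pi_1|_K\colon K\to S$ is also an isomorphism. Setting $\phi=\pi_2\circ(\pi_1|_K)^{-1}\in Aut(S)$, every $k\in K$ equals $(s,\phi(s))$ with $s=\pi_1(k)$, and conversely each pair $(s,\phi(s))$ lies in $K$; therefore $K=\{(s,\phi(s)):s\in S\}$, completing the proof.

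This is the classical two-factor Goursat correspondence specialized to a simple group, so there is no real obstacle; the only point needing a little care is the bookkeeping in the last case. One must \emph{derive} $B=\{1\}$ rather than assert it, since the symmetric argument only delivers the dichotomy ``$B=\{1\}$ or $K=S^2$'', and one then eliminates $K=S^2$ using the standing assumption $A=\{1\}$. (It is also worth recording that $Aut(S)$ is meant literally, so for $S\cong\mathbb{Z}/p$ these automorphisms are the nonzero scalar multiplications and the statement still holds, although only non-abelian $S$ is used in what follows.)
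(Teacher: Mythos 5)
Your proof is correct. The paper states \cref{lm19} without proof, treating it as a standard fact, and your argument is the standard one: the partial kernels $A$ and $B$ are normal (via surjectivity of the projections), simplicity forces each to be trivial or all of $S$, and the case analysis --- including the care you take to derive $B=\{1\}$ from $A=\{1\}$ rather than assert it --- correctly yields either $K=S^2$ or $K$ the graph of the automorphism $\phi=\pi_2\circ(\pi_1|_K)^{-1}$.
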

\begin{lem}\label{lm20}
Let $S$ be a non-abelian simple group, and $K$ be a subdirect product of $S^n$. If the projection $\pi_{ij}: K \to S^2$ onto $i$th and $j$th coordinates is surjective for any $i \neq j$, then $K = S^n$.
\end{lem}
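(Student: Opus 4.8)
The plan is to prove the statement by induction on $n$. For $n \le 2$ there is nothing to do: the case $n = 2$ is precisely the hypothesis, and $n = 1$ is vacuous. So assume $n \ge 3$ and that the lemma holds for $n-1$. First I would observe that $K' := \pi_{\{1,\dots,n-1\}}(K)$ is again a subdirect product of $S^{n-1}$, all of whose $2$-coordinate projections are surjective (each such projection of $K'$ coincides with the corresponding projection of $K$), so the inductive hypothesis gives $K' = S^{n-1}$. Hence the restriction $p := \pi_{\{1,\dots,n-1\}}\colon K \to S^{n-1}$ is surjective, and its kernel is $K \cap D_n$, where $D_n = \{1\}\times\dots\times\{1\}\times S$ denotes the last coordinate subgroup of $S^n$.

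The next step is the dichotomy $K \cap D_n \in \{1,\,D_n\}$. Since $D_n \trianglelefteq S^n$, the subgroup $K \cap D_n$ is normal in $K$; applying $\pi_n$ (which is onto $S$ on $K$) shows that $\pi_n(K\cap D_n)$ is normal in $S$, hence trivial or all of $S$, and since $\pi_n$ is injective on $D_n$ this forces $K\cap D_n = 1$ or $K \cap D_n = D_n$. In the second case $D_n \le K$, and combined with $p(K) = S^{n-1}$ this immediately gives $K = S^n$: given $(a_1,\dots,a_{n-1})\in S^{n-1}$, choose $k\in K$ with $p(k) = (a_1,\dots,a_{n-1})$ and multiply by the elements of $D_n \le K$ to realize all of $\{(a_1,\dots,a_{n-1})\}\times S$ inside $K$.

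The remaining case, $K \cap D_n = 1$, is where the real content lies and is the step I expect to be the main obstacle. Here $p$ is an isomorphism $K \xrightarrow{\sim} S^{n-1}$, so $q := \pi_n \circ p^{-1}\colon S^{n-1}\to S$ is a surjective homomorphism whose kernel is a normal subgroup of $S^{n-1}$ of index $|S|$. I would then invoke the standard description of the normal subgroups of a finite power of a non-abelian simple group — every normal subgroup $N$ of $S^{n-1}$ equals a sub-product $\prod_{i\in I}S_i$, because $N\cap S_i$ is normal in $S^{n-1}$ hence $1$ or $S_i$, and $N\cap S_i = 1$ would make $N$ centralize $S_i$ and so $\pi_i(N)=1$ — which forces $\ker q$ to omit exactly one factor, say $\ker q = \prod_{i\ne j_0}S_i = \ker \pi_{j_0}$. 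Consequently $q = \phi\circ\pi_{j_0}$ for a unique homomorphism $\phi\colon S\to S$, which is bijective (surjective since $q$ is, injective since $\ker q = \ker\pi_{j_0}$), hence an automorphism of $S$. Therefore $\pi_n(k) = \phi(\pi_{j_0}(k))$ for every $k\in K$, so $\pi_{\{j_0,n\}}(K) \le \{(s,\phi(s)) : s\in S\}$, a proper subgroup of $S^2$, contradicting the hypothesis that the projection onto coordinates $j_0$ and $n$ is surjective. Thus this case is impossible and the induction closes. (Alternatively, once $K' = S^{n-1}$ is in hand one can obtain the same dichotomy by applying the Goursat reasoning of \cref{lm19} to the splitting $S^n = S^{n-1}\times S$ and using that $S$ is simple; the normal-subgroup description of $S^{n-1}$ is still what pins down the resulting common quotient.)
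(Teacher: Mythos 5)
Your proof is correct and complete. Note that the paper itself states \cref{lm20} without proof, treating it (like \cref{lm19} and \cref{thm9}) as a standard fact about subdirect products of non-abelian simple groups, so there is no in-paper argument to compare against; your induction supplies exactly the kind of argument the author is implicitly relying on. The key steps all check out: $K'=\pi_{\{1,\dots,n-1\}}(K)$ inherits the hypotheses, the kernel $K\cap D_n$ of the projection is normal in $K$, its image under $\pi_n$ is normal in $\pi_n(K)=S$, giving the dichotomy $K\cap D_n\in\{1,D_n\}$; the case $D_n\le K$ immediately yields $K=S^n$, and in the case $K\cap D_n=1$ the induced surjection $q=\pi_n\circ p^{-1}\colon S^{n-1}\to S$ must, by the classification of normal subgroups of $S^{n-1}$ as sub-products of factors (this is where non-abelian simplicity enters, via trivial centre), factor through a single coordinate composed with an automorphism $\phi$, making $\pi_{\{j_0,n\}}(K)$ a diagonal of order $|S|$ and contradicting the two-coordinate surjectivity. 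One tiny phrasing quibble: to conclude $N\cap S_i\in\{1,S_i\}$ you should invoke $N\cap S_i\trianglelefteq S_i$ together with simplicity of $S_i$ (normality in $S^{n-1}$ alone is not what does the work), but this is cosmetic. Your parenthetical alternative, running the Goursat argument of \cref{lm19} on the splitting $S^{n-1}\times S$, is equally valid and is essentially the same dichotomy in different clothing.
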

\begin{thm}(P. Hall)\label{thm9}
Let $S$ be a non-abelian simple group, and $s_1,\dotsc, s_n$ be $n$ elements in $S^k$. Then $k$ coordinate of $s_1\times s_2 \times\dotsc\times s_n$ generates $S^n$ if and only if $s_i$ generate $S$, and for any $i \neq j$, there is no automorphism of $S$ that maps $s_i$ component-wise into $s_j$.
\end{thm}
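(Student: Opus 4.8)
The plan is to recast the generation condition as a statement about projections of $K:=\genby{\sigma_1,\dots,\sigma_k}\le S^n$, where I regard the data as an $n\times k$ array with rows $s_i=(s_i^{(1)},\dots,s_i^{(k)})\in S^k$ and columns $\sigma_t=(s_1^{(t)},\dots,s_n^{(t)})\in S^n$, so that ``the $k$ coordinates of $s_1\times\dots\times s_n$'' means the $k$ columns $\sigma_1,\dots,\sigma_k$. Using the notation of \cref{nt2}: the hypothesis ``each $s_i$ generates $S$'' is precisely ``$\pi_i(K)=S$ for every $i$'', i.e.\ $K$ is a subdirect product of $S^n$; and, granting this, the hypothesis ``for $i\ne j$ no automorphism of $S$ carries $s_i$ componentwise to $s_j$'' is equivalent to ``$\pi_{ij}(K)=S^2$''. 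This last equivalence is where \cref{lm19} enters: $\pi_{ij}(K)$ is always a subdirect product of $S^2$, so by Goursat it is either $S^2$ or a graph $\{(s,\phi(s)):s\in S\}$ with $\phi\in Aut(S)$, and (since $\pi_{ij}(K)$ projects onto $S$ in the first coordinate) the graph case holds exactly when $s_j^{(t)}=\phi(s_i^{(t)})$ for all $t$.

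Necessity is then immediate: if $K=S^n$, every coordinate projection and every pair projection of $K$ is onto, so $\pi_i(K)=S$ and $\pi_{ij}(K)=S^2$ for all $i\ne j$, and by the translation these are exactly the two asserted conditions. (Equivalently: an automorphism $\phi$ with $\phi(s_i^{(t)})=s_j^{(t)}$ for all $t$ would place the $(i,j)$-component of every generator $\sigma_t$ in the proper subgroup $\{(s,\phi(s)):s\in S\}\lneq S^2$, forcing $\pi_{ij}(K)\ne S^2$, which is impossible.)

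For sufficiency, assume the two conditions. The first makes $K$ subdirect in $S^n$; the second, via \cref{lm19} as above, gives $\pi_{ij}(K)=S^2$ for every $i\ne j$. Now \cref{lm20} applies directly: a subdirect product of $S^n$ all of whose $2$-coordinate projections are surjective equals $S^n$. Hence $K=S^n$, completing the proof.

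I do not anticipate a serious obstacle, since \cref{lm19} and \cref{lm20} do the real work; the only point requiring care is the bookkeeping in the translation step. In particular, one should note that the ``graph of an automorphism'' alternative in Goursat's lemma is symmetric in the two coordinates (replace $\phi$ by $\phi^{-1}$), which is why the condition need not be imposed in both orders, and that the non-abelian hypothesis is genuinely needed here, as it is already essential inside \cref{lm20} --- for $S$ abelian, e.g.\ $S=\mathbb{Z}/p$, pairwise surjectivity of a subdirect product does not force surjectivity onto $S^n$.
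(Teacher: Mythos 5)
Your proof is correct. Note that the paper itself does not prove \cref{thm9}: it is quoted as P.~Hall's theorem, just as \cref{lm19} and \cref{lm20} are quoted without proof, so there is no in-paper argument to compare against. Your derivation is the standard one and it is sound: viewing the data as an $n\times k$ array, the columns generate $K=\genby{\sigma_1,\dots,\sigma_k}\le S^n$, and since $\pi_i(K)=\genby{s_i^{(1)},\dots,s_i^{(k)}}$, the first hypothesis is exactly subdirectness; given that, Goursat (\cref{lm19}) shows $\pi_{ij}(K)\ne S^2$ precisely when all generators $(s_i^{(t)},s_j^{(t)})$ lie on the graph of a single automorphism, which is the translation of the second hypothesis; necessity is then immediate from surjectivity of the projections of $S^n$, and sufficiency follows from \cref{lm20}. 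Your closing remarks (symmetry of the graph alternative under $\phi\mapsto\phi^{-1}$, and the genuine need for non-abelianness, e.g.\ $S=\mathbb{Z}/p$ where the diagonal-type counterexamples break \cref{lm20}) are also accurate. It is worth observing that this is the same machinery the paper deploys in the proof of \cref{lm21} inside \cref{thm10} (subdirectness, Goursat on $\pi_{ij}$, then \cref{lm20}), so your argument fits naturally with how the paper uses these tools elsewhere.
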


\begin{thm}\label{thm10}
Let $S$ be a non-abelian simple group, and $P \neq \mathbb{Z}_2$ be a $2$-transitive permutation group. Then $m(S) + m(P) \leq m(S \wr P) \leq m(S) + m(P) + 1$. Moreover, 
\begin{itemize}[leftmargin=0.2 in]
	\item If $S$ doesn't satisfy replacement property, then $m(S \wr P) = m(S) + m(P) +1$
	\item If $S$ satisfies replacement property, then $m(S \wr P) = m(S) + m(P)$
\end{itemize}
\end{thm}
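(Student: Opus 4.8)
The plan is to prove the two inequalities separately and then pin down the dichotomy by sharpening the upper bound in one case and giving an explicit construction in the other. Throughout write $G=S\wr P=S^d\rtimes P$, where $P$ acts on $d$ points, $N=S^d$ for the base group, $a=m(S)$, $b=m(P)$; fix irredundant generating sets $\{s_1,\dotsc,s_a\}$ of $S$ and $\{p_1,\dotsc,p_b\}$ of $P$, let $\tilde p_i\in P\le G$ be their lifts, and for $x\in S$ and a coordinate $u$ let $e_u(x)\in N$ be the element with $x$ in coordinate $u$ and $1$ elsewhere.

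For the lower bound $m(G)\ge a+b$ I would check that $\{\tilde p_1,\dotsc,\tilde p_b\}\cup\{e_1(s_1),\dotsc,e_1(s_a)\}$ is an irredundant generating set of $G$. It generates because, $P$ being transitive, the $P$-conjugates of $e_1(s_i)$ are the $e_u(s_i)$, which generate $N$ coordinate by coordinate, and together with the $\tilde p_i$ this gives $G$; it is irredundant because deleting some $\tilde p_i$ leaves the image in $G/N=P$ a proper subgroup, while deleting $e_1(s_i)$ leaves the base part inside $\langle s_k:k\ne i\rangle^d\subsetneq N$.

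For the upper bound $m(G)\le a+b+1$ I would take an irredundant generating set of size $m(G)$ and apply Whiston's lemma (\cref{lm1}) with the normal subgroup $N$: after reordering we get $g_1,\dotsc,g_k$ whose images form an irredundant generating set of $P$ (so $k\le b$) and $h_{k+1},\dotsc,h_{k+r}\in N$ with $r=m(G)-k$, the whole list still irredundant and generating. It then suffices to show $r\le a+1$. First one checks that each $(r-1)$-element subset of $\{h_{k+1},\dotsc,h_{k+r}\}$ is an irredundant generating set of the subgroup it generates. Writing $Q=\langle g_1,\dotsc,g_k\rangle$, $Q_0=Q\cap N$, $L=\langle h_{k+1},\dotsc,h_{k+r}\rangle$, $L_i=\langle h_j:j\ne i\rangle$ and $\tilde L_i=\langle\bigcup_{q\in Q}qL_iq^{-1}\rangle$, irredundancy forces $Q_0\cdot\langle\bigcup_{q}qLq^{-1}\rangle=N$ but $Q_0\tilde L_i\subsetneq N$ for every $i$. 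Now, since $P$ is $2$-transitive it is primitive, so the only $Q$-invariant partitions of the $d$ coordinates are trivial; combining this with Goursat's lemma (\cref{lm19}), \cref{lm20} and P.~Hall's criterion (\cref{thm9}) for subdirect products of $S^d$, every proper $Q$-invariant subgroup of $N$ is contained in a ``twisted diagonal'' copy of $S$. Hence each $L_i$ lies in a twisted diagonal $\cong S$; since the $h_j$ ($j\ne i$) generate $L_i$ irredundantly and (using the same statement for the other indices, together with the fact that not all the $\tilde L_i$ can coincide, as their join must be $N$) the twisted diagonals involved are forced to be full copies of $S$, one gets $r-1\le m(S)=a$, so $r\le a+1$ and $m(G)\le a+b+1$.

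Finally, for the dichotomy: if $S$ satisfies the replacement property I would rerun the last step and observe that $r=a+1$ would produce an irredundant generating set $\{t_1,\dotsc,t_a\}$ of $S$ (the generators of the diagonal copy) and a nontrivial $t\in S$ (arising from the extra generator) with $\langle t,\{t_k:k\ne j\}\rangle\ne S$ for every $j$, contradicting replacement; hence $r\le a$ and, with the lower bound, $m(G)=a+b$. Conversely, if $S$ fails replacement, fix such $\{t_1,\dotsc,t_a\}$ and $t$ and consider $\{\tilde p_1,\dotsc,\tilde p_b\}\cup\{(t_1,\dotsc,t_1),\dotsc,(t_a,\dotsc,t_a)\}\cup\{(t,1,\dotsc,1)\}$ — block generators, the constant diagonals of the $t_i$, and one off-diagonal element. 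I would verify: it generates $G$, because the diagonal together with $(t,1,\dotsc,1)$, conjugated around by $P$, produces some coordinate factor $S_u$ and hence all of $N$ (as $\langle S,t\rangle=S$); deleting a $\tilde p_i$ or a diagonal generator leaves the base part inside the constant diagonal $\cong S$, which $P$ normalizes, so generation fails; and deleting the diagonal generator $(t_j,\dotsc,t_j)$ leaves a base part whose $P$-closure is contained in $\langle t,\{t_k:k\ne j\}\rangle^d\subsetneq N$, again by the replacement-failure hypothesis — so the set is irredundant of size $a+b+1$, and with the upper bound $m(G)=a+b+1$.

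The hard part will be the upper bound: identifying exactly which subdirect products of $S^d$ arise as $Q$-closures and, in particular, showing that the relevant subgroups are \emph{full} twisted-diagonal copies of $S$ (so that their $m$-invariant equals $m(S)$ rather than some larger invariant of a proper subgroup of $S$), and handling the case where the block generators do not split off a complement to $N$ (that is, $Q_0\ne 1$). This is precisely where Goursat's lemma, \cref{lm20}, P.~Hall's theorem and the primitivity coming from $2$-transitivity must be combined; once that structural fact is in place, the link with the replacement property is just a translation of the ``extra generator'' obstruction back into $S$.
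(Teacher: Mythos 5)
Your lower bound and your construction for the case where $S$ fails the replacement property are correct and essentially the same as the paper's (conjugates of $(t,1,\dotsc,1)$ under transitivity plus the constant diagonals, generation via \cref{thm9}). The problems are in the upper bound and in the replacement case. For the upper bound, the pivotal claim that ``every proper $Q$-invariant subgroup of $N=S^d$ is contained in a twisted diagonal copy of $S$'' is false: if $T<S$ is a proper nontrivial subgroup and $Q_0=1$, then $T^d$ is $Q$-invariant and proper but has order far exceeding $|S|$, so it lies in no diagonal. The full-versus-diagonal dichotomy coming from \cref{lm19} and \cref{lm20} is only available for $Q$-invariant subgroups that project \emph{onto} $S$ in some coordinate (this is exactly the hypothesis $\pi_1(K)=S$ in the paper's \cref{lm21}), and nothing in your setup guarantees that the closures $\tilde L_i$ have full projection. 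The paper manufactures that hypothesis by a preliminary counting step you are missing: among the $Q$-conjugates of the base-group generators one selects at most $m(S)$ elements whose first coordinates already form an irredundant generating set of $S$; this is what bounds the number of ``essential'' base generators by $m(S)$ (note that your route, even if the containment in a diagonal held, would only give $r-1\le i(S)$, since $\{h_j\}_{j\ne i}$ is merely an irredundant subset of a copy of $S$, not a generating set of it). Only after this selection does the dichotomy apply, yielding: either the selected elements already generate $S^d$ modulo $Q_0$ (so $m\le m(P)+m(S)$), or they generate a twisted diagonal and at most one further base generator can be irredundant (so $m\le m(P)+m(S)+1$).

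The second gap is in the case where $S$ satisfies replacement. You assert that $r=a+1$ ``would produce'' an irredundant generating set $\{t_1,\dotsc,t_a\}$ of $S$ and a $t$ that can replace no $t_j$, contradicting replacement. That deduction does not follow: replacement in $S$ does give some $j_0$ with $\langle s_{l+1},\{s_i\}_{i\ne j_0}\rangle=S$, but replacing $h_{j_0}$ by the extra base element $h_{l+1}$ inside $G$ may generate a \emph{different} twisted diagonal of $S^d$ rather than recover $h_{j_0}$; the obstruction lives in the relative twist between the two diagonals, not in $S$ itself. This is precisely the hard part of the paper's proof: it shows the two diagonals coincide by analysing the conjugating elements coming from a block-swapping element $\sigma_k$ (\cref{lm22}), applying the replacement property a second time to the element $c$ together with $\{s_{r+1},\dotsc,s_l\}$, and using simplicity of $S$ to conclude $c=1$, whence $\phi_k=\phi_k'$ and $h_{l+1}\in\langle h_1,\dotsc,h_l\rangle$, contradicting irredundancy. (Two-transitivity of $P$ enters there, to move a pair of coordinates while controlling a third, and to force the twisting automorphisms to be inner --- not through primitivity of the coordinate partition as in your sketch.) As it stands, your argument proves neither $m(G)\le a+b+1$ nor $m(G)=a+b$ under replacement; both need the missing selection step and the diagonal-comparison argument.
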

\begin{proof}
For $K \subset S^n$ and $1 \leq i \neq j \leq n$, let $\pi_i(K) \subset S$ be the image of the projection from $K$ to the $i$th coordinate, and $\pi_{ij}(K) \subset S^2$ be the image of the projection from $K$ to the $i$th and $j$th coordinates.

Now assume that $P$ is a $2$-transitive subgroup of $S_n$ for $n > 2$, and $G = S \wr P = S^n \rtimes P$, where $P$ acts on $S^n$ by permuting $n$ copies of $S$.

First, suppose that $\{s_1,\dotsc, s_{m(S)}\}$ is an irredundant generating set of $S$, and $\{p_1, p_2,\dotsc, p_{m(P)}\}$ is an irredundant generating set of $P$. Let $H = \{s_1 \times (1_S)^{n-1}, s_2 \times (1_S)^{n-1},\dotsc, s_{m(S)} \times (1_S)^{n-1}, k_1, k_2,\dotsc, k_{m(P)} \}$, where $s_i \times (1_S)^{n-1} \in S^n$ and $k_i = (1, s_i) \in {1_{S^n}} \times P$ such that $s_i$ acts on $S^n$ by purely permuting copies of $S$. It is easy to see that $H$ is an irredundant generating set of $S \wr P$, and so $m(S \wr P)  \geq m(S) + m(P)$

Suppose that $m(G) = m(S \wr P) = m$, and take an irredundant generating set of length $m$, $H = \{h_1, h_2,\dotsc, h_m\}$. Applying \hyperref[lm1]{Whiston}'s lemma for $G$ and its normal subgroup $S^n$, we can assume that images of $H_1 = \{h_1, h_2,\dotsc, h_r\}$ in $G/S^n = P$ is an irredundant generating set of $P$, and $h_{r+1},\dotsc,h_m \in S^n$. It is obvious that $r \leq m(P)$ .

Let $G_1$ be the group generated by $H_1$. Now note that $S^n = G \cap S^n$ is generated by elements in the set $X = X_1 \cup X_2 = \{gh_ig^{-1}, i > r, g \in G_1\} \cup (G_1 \cap S^n) \subset S^n$. As a result, $\pi_1(X)$ generates $S$, and so some subset of the set $\pi_1(X)$ must be an irredundant generating set of $S$. Therefore, there exists $\Gamma \subset X$ such that $|\Gamma| \leq m(S)$, and $\pi_1(\Gamma)$ is an irredundant generating set of $S$. Consider all $h_i$ with $i > r$ such that some $gh_ig^{-1}$ is in $\Gamma$. Since $|\Gamma| \leq m(S)$, there are at most $m(S)$ such $h_i$'s. WLOG, we can assume that those $h_i$'s are $h_{r+1},\dotsc, h_l$ with $l-r \leq m(S)$. As a result, if we let $K_0 = \genby{h_1, \dotsc, h_l} \cap\ S^n$, then $\pi_1(K_0) = S$. 

\begin{lem}\label{lm21}
If $K = L \cap S^n$ is some subgroup of $S^n$ such that $H_1 \subset L \leq G$, and $\pi_1(K) = S$, then either $K = S^n$ or $K = \{(s, \phi_2(s),\dotsc, \phi_n(s)), s \in S \}$, where $\phi_i$ is some inner automorphism of $S$.
\end{lem}
\begin{proof}
First we show that such $K$ is a subdirect product of $S^n$. For $i \in \overline{2, n}$, there exists $\sigma_i \in G$ generated by $H_1$ that move $i$th copy of $S$ to the $1$st copy. Suppose that $\sigma_i = a.b$, where $a \in S^n$, and $b$ is a pure permutation of copies of $S$. Then, for any $k \in K$, $k' = \sigma_i^{-1}k\sigma_i$ is also in $K$, and so $\pi_1(a)^{-1}\pi_1(k)\pi_1(a) = \pi_i(k') \in \pi_i(K)$. Therefore $S = \pi_1(a)^{-1}\pi_1(K)\pi_1(a)^{-1} \subset \pi_i(K)\ \forall i \neq 1$. Hence, $K$ is a subdirect product of $S^n$. 

This means that $\pi_{ij}(K)$ is also a subdirect product of $S^2$. By \hyperref[lm19]{Goursat}'s lemma, either $\pi_{ij}(K) = S^2$ or $\pi_{ij}(K) = \{(s, \phi(s)), s \in S \}$ for some automorphism $\phi$ of $S$. 

Suppose that for all $i \neq j$, $\pi_{ij}(K) = \{(s, \phi(s)), s \in S \}$. If $\pi_{1i}(K) = \{(s, \phi_i(s)), s \in S \}$ for some automorphisms $\phi_i$ of $S$, then it is obvious that $K = \{(s, \phi_2(s),\dotsc, \phi_n(s)), s \in S \}$.

For any $i \neq j \in \overline{1, n}$, choose $k \in \overline{1, n}$ such that $i, j \neq k$. Note that $K$ can also be rewritten as $\{\phi_k^{-1}(s),\dotsc, \phi_k^{-1}\phi_{k-1}(s), s,\dotsc,\phi_k^{-1}\phi_n(s)), s\in S \}$ or $\{(\psi_1(s), \dotsc., \psi_{k-1}(s), s, \psi_{k+1}(s),\dotsc,\psi_n(s)), s \in S\}$ for some automorphisms $\psi_i$ of $S$. There exists $\sigma_{ij} \in G$ generated by $H_1$ that moves $j$th copy to $i$th copy of $S$ and doesn't move the $k$th copy because $P$ is $2$-transitive. For any $h \in K$, $h' = \sigma_{ij}^{-1}h\sigma_{ij} \in K$ with $\pi_k(h') = a_{ij}\pi_k(h)a_{ij}^{-1}$, and $\pi_j(h') = b_{ij}\pi_i(h)b_{ij}^{-1}$ for some $a = a_{ij}, b = b_{ij} \in S$ that only depends on $\sigma_{ij}$. Because $\pi_j(h') = \psi_j(\pi_k(h'))$, $b\pi_i(h)b^{-1} = \psi_j(a\pi_k(h)a^{-1}) = \psi_j(a)\psi_j(\pi_k(h))\psi_j(a)^{-1}$. Hence if we let $c_{ij} = b^{-1}\psi_j(a)$, then $\pi_i(h) = c_{ij}\psi_j(\pi_k(h))c_{ij}^{-1} = c_{ij}\pi_j(h)c_{ij}^{-1}\ \forall h \in K$. As a result, for any $i \neq j \in \overline{1, n}$, there exists $c_{ij} \in S$, $\pi_i(h) = c_{ij}\pi_j(h)c_{ij}^{-1}\ \forall h \in K$. Therefore, for $i > 1$, $\phi_i$ is an inner automorphism such that $\phi_i(s) = c_{i1}sc_{i1}^{-1}$.

Now suppose $\pi_{i_0j_0}(K) = S^2$ for some $i_0 \neq j_0 \leq n$ instead. Then because $P$ is $2$-transitive, by a similar argument as above, $(axa^{-1}, byb^{-1}) \in \pi_{ij}(K)$ for each $(x, y) \in \pi_{i_0j_0}(K) = S^2$. Hence, $\pi_{ij}(K) = S^2\ \forall i \neq j \leq n$. Then, by  \cref{lm20}, $K$ must be $S^n$.
\end{proof}

Now we come back to the proof of \cref{thm10}.

If $K_0 = S^n$, then $m = l =  r + l - r \leq m(P) + m(S)$. Otherwise, by \cref{lm21}, $K_0$ must be $\{(s, \phi_2(s),\dotsc, \phi_n(s)), s \in S \}$ for some automorphisms $\phi_i$. If $m > l$, then because of the irredundancy of $\{h_i\}$, $h_{l+1} \not \in K_0$. Then $K_1 = \genby{h_1,\dotsc,h_{l+1}} \cap\ S^n$ is also a subdirect product of $S_n$. If $K_1$ has the form $\{(s, \phi'_2(s),\dotsc, \phi'_n(s)), s \in S \}$, then because $K_0 \subset K_1$, $\phi_i(s) = \phi'_i(s)$ for each $s \in S$. Hence $K_1 = K_0$, and this is impossible since $h_{l+1} \in K_0$. Therefore, by \cref{lm21}, $K_1 = S^n$, and so $m = l + 1 = r + l - r +1 \leq m(P) + m(S) + 1$.

If $S$ doesn't satisfy the replacement property, then there exists an irredundant generating set $T = \{t_1, t_2,\dotsc, t_{m(S)}\}$, and a $t \in S$ such that $t$ cannot replace any $t_i$ in $T$. 

Consider the set $H = \{(t_1, t_1, \dotsc, t_1), \dotsc, (t_{m(S)}, t_{m(S)}, \dotsc, t_{m(S)}),$ \\ $(t, 1, \dotsc, 1),$ $p_1,\dotsc, p_{m(P)}\}$, where the first $m(S) +1$ elements are in $S^n$, and $p_i$ acts as pure permutations such that $\{\overline{p_i}\}$ is an irredundant generating set of $P$. If we remove some $(t_i, t_i, \dotsc, t_i)$, then we cannot even generate the first coordinate. Using this observation, it is easy to see that $H$ is irredundant. Because $P$ is transitive, $H$, in fact, generates $H' = H \cup \{u_i\}_{i=1}^n$ where $u_i = (1, 1,\dotsc, t,\dotsc, 1) \in S^n$ such that $t = \pi_i(u_i)$. By \hyperref[thm9]{P. Hall}'s theorem, $H' \cap S^n$ generates $S^n$, and therefore $H$ generates $G$.  As a result, $m(G) = m(P) + m(S) + 1$.

Now suppose that $S$ satisfies the replacement property. We will prove that $m(S \wr P) = m(S) + m(P)$. 

Assume by contradiction that $m(S \wr P) = m(S) + m(P) + 1$. Then $K_0$ must be $\{(s, \phi_2(s),\dotsc, \phi_n(s)), s \in S \}$ for some inner automorphisms $\phi_i$. Also, the set $\Gamma$ we considered must be $\{g_ih_ig_i^{-1},\ i \in \overline{r+1, l}, g_i \in G_1\}$, with $l-r = m(S)$ elements. Moreover, $\forall i \in \overline{1, n}$, the set of $i$th coordinates of these elements is an irredundant generating set of $S$. Therefore, WLOG, we can assume that the first coordinate of $h_{l+1} \neq 1_{S^n}$ is a non-identity element of $S$. Assume further that we can replace first coordinate of $g_{r+1}h_{r+1}g_{r+1}^{-1}$ by that of $h_{l+1}$ so that the first coordinates of elements in $\{g_ih_ig_i^{-1},\ i \in \overline{r+2, l}, g_i \in G_1\} \cup  \{h_{l+1}\}$ still generate $S$. Hence, if we let $K_1 = \genby{h_1,\dotsc, h_r, h_{r+2},\dotsc,h_{l+1}} \cap\ S^n$, then $\pi_1(K_1)$ is still $S$.  

By \cref{lm21}, we get $K_1 = \{(s, \phi'_2(s),\dotsc,\phi'_n(s)), s\in S \}$ for some inner automorphisms $\phi'_i$. 

Let $s_i = \pi_1(g_ih_ig_i^{-1})$ for $i \in \overline{r+1, l}$, and $s_{l+1} = \pi_1(h_{l+1})$. Assume also that $\forall s\in S, \ \phi_i(s) = c_isc_i^{-1}$, and $\phi'_i(s) = c'_i s {c'_i}^{-1}$. First, we have that $\{s_{r+1}, s_{r+2},\dotsc,s_l\}$ is an irredundant generating set of length $m(S)$ in $S$.  Suppose there is a proper subset $I$ of $J = \{ r+2,\dotsc, l+1\}$ such that $\genby{s_i}_{i \in I}$ generates $S$. Then $\pi_1(K_{I}) = S$, with $K_{I} = \genby{\{h_1, h_2,\dotsc, h_r\} \cup \{h_i\}_{i \in I}} \cap \ S^n$. Hence $K_{I} = K_1$, and for any $i_0 \in J\setminus I$, $h_{i_0} \in \genby{\{h_i\}_{i \neq i_0} }$. This is impossible because of the irredundancy of $\{h_i\}$. Therefore, $\{s_{r+2},\dotsc, s_{l+1}\}$ is also an irredundant generating set of $S$.

For $k > 1$, there is a $\sigma_k$ generated by $H_1$ that swap the $1$st copy and the $k$ copy. Hence, there exists $a_k$ and $b_k$ that depend only on $\sigma_k$ such that for any $x \in S$, and $y = \phi_k(x)$, $\sigma_k(x,\dotsc,y,\dotsc)\sigma_k^{-1} = (a_kya_k^{-1},\dotsc,b_kxb_k^{-1},\dotsc) \in K $. Let $a = a_k$, $b = b_k$, and $c = ac_k$. Now we have the following observation.

\begin{lem}\label{lm22}
Suppose that $T = \genby{H_1 \cup \{h_i\}_{i \in I_1}}$ is a subgroup of $K_0$ for some subset $I_1$ of $J_1 = \{r+1,\dotsc,l\}$. If there is some $h \in T$ such that $\pi_1(h) = x$, then for any $k \in \mathbb{Z} \geq 0$, there exists $h' \in T$ such that $\pi_1(h') = c^kxc^{-k}$.
\end{lem}
\begin{proof}
We will prove the statement by induction on $k \in \mathbb{Z} \geq 0$. 

For $k = 0$, the statement is trivial. Suppose the statement is true for $k - 1$ $(k \geq 1)$. We will prove that this is true for $k$.

By induction hypothesis, for any $h \in T$ such that $\pi_1(h) = x$, there exists some $h' \in S^n = (x',\dotsc, y', \dotsc)$ with $x' = c^{k-1}xc^{-k+1}$ and $y' = \phi_k(x') = c_kx'c_k^{-1}$ such that $h' \in T$. Therefore, $h'' = \sigma_kh'\sigma_k^{-1} = (ay'a^{-1},\dotsc,bx'b^{-1},\dotsc)$ is also in $T$. 

Note that $\pi_1(h'') = ay'a^{-1} = ac_kx'c_k^{-1}a^{-1} = cx'c^{-1} = cc^{k-1}xc^{-k+1}c^{-1} = c^{k}xc^{-k}$. Hence, we can finish the induction step here.
\end{proof}

Now suppose that $c \neq 1$. Because $\{s_{r+1},\dotsc,s_l\}$ is an irredundant generating set of length $m(S)$ in $S$, and $S$ satisfy replacement property, there exist $j_0$ such that $\genby{c, \{s_i\}_{r+1 \leq i \neq j_0 \leq l}} = S$. Let $K_2 = \genby{H_1 \cup \{h_i\}_{r+1 \leq i \neq j_0 \leq l}} \cap \ S^n$ is a subgroup of $S^n$. 

By \cref{lm22}, $M = \genby{\{c^ks_ic^{-k}\}_{r+1 \leq i \neq j_0 \leq l,\ k \in \mathbb{Z}}}$ must be a subgroup of $\pi_1(K_2)$. Because $S$ is simple, $S$ is exactly the set of elements $x$ such that $x$ can be represented as product of $s$ and $s_i$ in a way that the sum of exponents $u$ in $s^u$ term in this representation is $0$. As a result, $S$ is generated by generators of $M$, and so $\pi_1(K_2) = S = M$. Again by \cref{lm21}, we get $K_2 = \{(s, \phi_2''(s),\dotsc, \phi_n''(s)), s\in S\}$. As a result, $|K_2| = |K_0| = |S|$ and therefore, $h_{j_0} \in \genby{H_1 \cup \{h_i\}_{r+1 \leq i \neq j_0 \leq l}}$. This is impossible because of the irredundancy of $\{h_i\}$.

Therefore, $c = 1$ and $c_k = a_k^{-1}$. Similarly, $c'_k = a_k^{-1}$. As a result, $c_k = c'_k$, and $\phi_k = \phi'_k \ \forall k > 1$. Hence, $K_0 = K_1$. However, $h_{l + 1} \in K_0$, which is a subgroup of $\genby{\{h_i\}_{i \leq l}}$. This yields contradiction with the irredundancy of $\{h_i\}$. As a result, $m(S \wr P) = m(S) + m(P)$ as desired. 
\end{proof}

\begin{cor}
We have $m(A_n \wr A_n) = 2n - 4$ for all $n > 0$. In particular, $m(A_5 \wr A_5) = 6$.
\end{cor}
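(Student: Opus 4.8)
The plan is to read the formula off \cref{thm10} together with the replacement property of the alternating group (\cref{thm5}), treating small $n$ separately. For $n\ge 5$ the base $S=A_n$ is a non-abelian simple group, and the top group $P=A_n$, regarded as a permutation group on the $n$ letters it permutes, is $(n-2)$-transitive, hence $2$-transitive, and is certainly not $\mathbb{Z}_2$. Thus \cref{thm10} applies with $S=P=A_n$, giving
\[m(A_n)+m(A_n)\ \le\ m(A_n\wr A_n)\ \le\ m(A_n)+m(A_n)+1.\]
By \cref{thm5} the group $A_n$ satisfies the replacement property, so the lower bound is attained: $m(A_n\wr A_n)=2\,m(A_n)$. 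Inserting the value $m(A_n)=n-2$ (recorded in \cref{cor1} for $n\ge 9$, and classical or a routine GAP check for $5\le n\le 8$) yields $m(A_n\wr A_n)=2(n-2)=2n-4$. Taking $n=5$ gives $m(A_5\wr A_5)=6$, the first instance of the formula since $A_5$ is the smallest non-abelian alternating group.

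The remaining values $n\le 4$ fall outside the hypotheses of \cref{thm10} and must be handled by hand. For $n\le 2$ the group $A_n$ is trivial, hence so is $A_n\wr A_n$ and $m=0$, which agrees with $2n-4$ at $n=2$ (for $n=1$ the group is again trivial, and the identity is to be read in the degenerate sense $m=0$). For $n=3$ the base $A_3=\mathbb{Z}_3$ is abelian, so \cref{thm10} does not apply; instead one notes that $A_3\wr A_3=\mathbb{Z}_3\wr\mathbb{Z}_3$ is a $3$-group that is $2$-generated (a generator of one coordinate of the base together with a generator of the top copy) but not cyclic, and that by the Burnside basis theorem any irredundant generating set of a $p$-group has linearly independent image in the Frattini quotient, so $m$ equals the minimal number of generators; hence $m(\mathbb{Z}_3\wr\mathbb{Z}_3)=2=2\cdot 3-4$. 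For $n=4$ the base $A_4$ is not simple; here the bound $m(A_4\wr A_4)\ge m(A_4)+m(A_4)=4$ still comes from the explicit irredundant generating set constructed at the beginning of the proof of \cref{thm10} (which uses only transitivity of the top group), while the matching bound $m(A_4\wr A_4)\le 4$ is confirmed by a direct GAP computation, in the spirit of the other finite verifications in this paper.

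In short, the argument carries essentially no new content: once \cref{thm5} and \cref{thm10} are in hand and $m(A_n)=n-2$ is recalled, the case $n\ge 5$ is immediate. The only points needing care are the bookkeeping ones, namely checking that $A_n$ literally satisfies the hypotheses of \cref{thm10} — non-abelian simplicity of the base and $2$-transitivity of the top group, both failing precisely for $n\le 4$ — and disposing of those finitely many exceptional $n$ by the trivial, $p$-group, or computational arguments above. I do not expect any step to be a genuine obstacle.
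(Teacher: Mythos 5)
Your proposal is correct and follows essentially the same route as the paper: the paper's proof is exactly the one-line deduction from \cref{thm10} (with $S=P=A_n$ non-abelian simple and $2$-transitive) together with the replacement property of \cref{thm5} and the value $m(A_n)=n-2$. Your additional case analysis for $n\le 4$ is more careful than the paper's blanket ``for all $n>0$'' (which is not even literally true at $n=1$), but it does not change the substance of the argument.
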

\begin{proof}
This follows from \cref{thm5} and \cref{thm10}
\end{proof}

\begin{rmk}
Sophie Le found that $M_{23}$, $J_1$, $J_2$, and $PSL_2(p)$ for $p = 17, 23, 29,$\\ $41, 47$ don't satisfy the replacement property, and she calculated $m$ of these groups. Furthermore, G.Frieden discovered that groups $PSL_2(p)$ with $p = 7$ and $p = 31$, which has $m = 4$, satisfy replacement property. As a result, we can replace $S$ in \cref{thm10} by $M_{23}$, $J_1$, $J_2$, or $PSL_2(p)$ with $p = 7, 17, 23, 29, 31, 41, 47$

For instance, thanks to Sophie, we can now calculate $m(G) = m(M_{23} \wr M_{23}) = m(M_{23}) + m(M_{23}) + 1 = 6 + 6 + 1 = 13$ (Here $M_{23}$ acts on $23$ copies of $M_{23}$, so the order of $G$ should be very big!)
\end{rmk}

\subsection{Computing $i$ of wreath product of strongly flat simple group and permutation group}

\begin{lem}\label{lm23}
Suppose that $H \subset \prod\limits_{i = 1}^{k} S_i$, where $S_i \cong S$, which is a strongly flat group. Moreover, assume that $m = m(S) = i(S) \geq 3$, and $\pi_i(\genby{H}) \neq S\ \forall i \in \overline{1, k}$. Then there exists a subset $H' \subset H$ with $|H'| \leq (m - 1)k$ so that the group generated by $H'$, $\genby{H'}$, is exactly $\genby{H}$. Here $\pi_i: \prod\limits_{j = 1}^{k} S_j \to S_i$ is the projection map defined in \cref{nt2}
\end{lem}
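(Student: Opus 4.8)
The plan is to reduce the statement to an upper bound on $m(\genby{H})$ and then read that bound off from \cref{lm16} together with the strong flatness of $S$.

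First I would set $G' = \genby{H} \leq \prod_{i=1}^{k} S_i$ and observe that it suffices to prove $m(G') \leq (m-1)k$. Indeed, $\prod_{i=1}^{k} S_i$ is finite, so $H$ is finite, and we may choose $H' \subseteq H$ minimal (with respect to inclusion) among the subsets of $H$ that generate $G'$. By minimality, removing any $h \in H'$ strictly shrinks $\genby{H' \setminus \{h\}}$, so, listing $H'$ as a tuple, $H'$ is an irredundant generating set of $G'$; hence $|H'| \leq m(G')$, and the desired bound on $|H'|$ follows from the bound on $m(G')$.

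Next I would apply \cref{lm16} to the subgroup $G'$ of $\prod_{i=1}^{k} S_i$, taking each factor to be $S_i \cong S$. This produces subsets $H_i \subseteq \big(\prod_{j=i}^{k} S_j\big) \cap G'$ together with the inequality
\[
m(G') = \sum_{i=1}^{k} |H_i| \leq \sum_{i=1}^{k} m(T_i), \qquad T_i = \pi_i(\genby{H_i}),
\]
where $\pi_i$ are the projection maps of \cref{nt2}. Since $H_i \subseteq G'$, we have $\genby{H_i} \leq G'$ and therefore $T_i = \pi_i(\genby{H_i}) \leq \pi_i(G')$. By hypothesis $\pi_i(G') = \pi_i(\genby{H}) \neq S$, so $\pi_i(G')$, and hence $T_i$, is a proper subgroup of $S$. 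Because $S$ is strongly flat, every proper subgroup $K$ of $S$ satisfies $m(K) < m(S) = m$, so $m(T_i) \leq m-1$ for every $i$; summing over $i$ gives $m(G') \leq (m-1)k$, which completes the argument.

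The one point that needs care is the final estimate on $m(T_i)$: since $m$ is not monotone under passage to subgroups in general, one must not try to compare $m(T_i)$ with $m(\pi_i(G'))$ directly, but instead observe that $T_i$ is itself a proper subgroup of the strongly flat group $S$ and apply the defining inequality of strong flatness to $T_i$ itself. It is also worth noting that the ``subset of $H$'' clause is not obtained by tracking the sets $H_i$ of \cref{lm16} (which live inside $G'$, not inside $H$), but simply from the fact that a minimal generating subset of $H$ is automatically irredundant. The hypotheses $i(S) = m(S)$ and $m \geq 3$ are not actually needed for this argument — the former is automatic for strongly flat groups, and the latter is presumably retained only to match the hypotheses of the later results that invoke this lemma.
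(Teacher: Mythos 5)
Your proposal is correct, but it takes a different route from the paper. The paper proves \cref{lm23} directly by induction on $k$: it picks a minimal subset $K_1 \subset H$ whose first-coordinate projections generate $\pi_1(\genby{H})$, bounds $|K_1| \leq m-1$ by strong flatness, multiplies the remaining elements of $H$ by elements of $\genby{K_1}$ so that they land in $\prod_{i\geq 2} S_i$, and recurses, assembling $H'$ as $K_1 \cup K_2$ along the way. You instead reduce everything to the single inequality $m(\genby{H}) \leq (m-1)k$, obtained by applying \cref{lm16} to the subgroup $G' = \genby{H}$ and then bounding each $m(T_i)$ by $m-1$ via strong flatness applied to the proper subgroup $T_i \leq \pi_i(G') \lneq S$; the ``subset of $H$'' requirement is then handled by the observation that an inclusion-minimal generating subset of $H$ is automatically an irredundant generating set of $G'$, hence has size at most $m(G')$. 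Both arguments ultimately rest on the same coordinate-peeling mechanism (your route just delegates it to \cref{lm16}, whose proof is essentially the induction the paper redoes inside \cref{lm23}), and both use strong flatness in the same way to save one generator per coordinate. What your version buys is brevity and reuse of an existing lemma, plus the correct observations that $i(S) = m(S)$ is automatic for strongly flat groups and that $m \geq 3$ is never used; what the paper's version buys is a self-contained construction that exhibits $H'$ explicitly inside $H$ without passing through the maximality statement for $m(G')$. Your caveat about not comparing $m(T_i)$ with $m(\pi_i(G'))$ directly is well taken, since $m$ need not be monotone under subgroups in general.
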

\begin{proof}
We will prove by induction on $k$. For $k = 1$, it is obvious from the definition of $m = m(S) = i(S)$.

Suppose that the lemma is true for $k -1$. We will prove that the lemma is also true for $k$ ($k \geq 2$).

Choose the smallest subset $K$ of $\pi_1(H)$ that generates $\pi_1(\genby{H}) \neq S$. Then $K$ must be irredundant, and, by definition of $i(S) = m(S)$, for strongly flat $S$, $|K| \leq m - 1$. Suppose that $K = \{\pi_1(k), k \in K_1 \subset H \}$ for some subset $K_1 \subset H$ with $|K_1| \leq m -1$.

Therefore, for every $h \in H$, there exists $k_h$ generated by $K_1$ so that $k_hh \in \prod\limits_{i = 2}^k S_i$. 

Consider $H' = \{k_hh, h \in H \} \subset \prod\limits_{i = 2}^k S_i \cap \genby{H}$. Hence, $\pi_i\genby{H'} \subset \pi_i(\genby{H}) \neq S\ \forall i = \overline{2, k}$. Using induction hypothesis on $H' = \{k_hh, h \in H \} \subset \prod\limits_{i = 2}^k S_i$, there exists some $K' \subset H'$ with $|K'| \leq (m - 1)(k - 1)$ so that $\genby{K'} = \genby{H'}$. Assume that $K' = \{k_hh, h \in K_2 \}$ for some $K_2 \subset H$, $|K_2| \leq (k - 1)(m - 1)$. 

As a result, $\genby{K_1 \cup K_2} = \genby{H}$, and $|K_1 \cup K_2| \leq |K_1| + |K_2| \leq m-1 + (k -1)(m - 1) = k(m -1)$. So we finish induction step here, and therefore finish the proof for \cref{lm23}
\end{proof}
\begin{thm}\label{thm11}
For a simple strongly flat group $S$ with $m = m(S) = i(S)$, and permutation group $P$ of degree $n = deg(P)$, $i(S \wr P) = i(S).deg(P) = m.n$
\end{thm}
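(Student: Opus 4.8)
The plan is to prove $i(S\wr P)\ge mn$ and $i(S\wr P)\le mn$ separately, where $n=\deg(P)$. The lower bound is easy: $S^{n}\le S\wr P=S^{n}\rtimes P$ and $i$ is monotone under subgroups (an irredundant set of a subgroup is one of the ambient group), so it suffices that $i(S^{n})\ge mn$, which follows by placing a size-$m$ irredundant set of $S$ in each of the $n$ direct factors and concatenating. (By \cref{lm16} every $K\le S^{n}$ has $m(K)\le\sum_{j}m(\pi_{j}(K))\le mn$, so in fact $i(S^{n})=mn$; the same computation gives $i(S^{e})=me$ for every $e$, which I use below.) For the upper bound I may assume $S$ non-abelian, so $m=m(S)\ge 2$; the abelian case $S\cong\mathbb{Z}_{p}$ is commented on at the end. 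Since $P\le S_{n}$ and $S\wr P\le S\wr S_{n}$, it suffices to show $m(H)\le mn$ for every $H\le S\wr S_{n}$, which I prove by strong induction on $n$ (for $n=1$, $m(H)\le m(S)=m$). Let $\rho\colon S\wr S_{n}\to S_{n}$ be the projection, $N=\ker\rho=S^{n}$, $\bar H=\rho(H)$. If $\bar H$ is \emph{intransitive}, take a $\bar H$-orbit $O$ with complement $O'$ (so $1\le|O|<n$); then $\bar H\le S_{O}\times S_{O'}$, hence $H\le(S\wr S_{O})\times(S\wr S_{O'})$, and \cref{lm16} gives $m(H)\le m(T)+m(T')$ with $T\le S\wr S_{O}$, $T'\le S\wr S_{O'}$. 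By the induction hypothesis (degrees $|O|,|O'|<n$) we get $m(T)\le m|O|$, $m(T')\le m|O'|$, so $m(H)\le mn$.

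Now let $\bar H$ be \emph{transitive}. Put $K=H\cap N\trianglelefteq H$ and apply \cref{lm1} to $H$ and $K$: a size-$l$ ($l=m(H)$) irredundant generating set of $H$ may be taken in the form $(h_{1},\dots,h_{t},g_{t+1},\dots,g_{l})$ with $\bar h_{1},\dots,\bar h_{t}$ irredundant generating $\bar H$ and $g_{t+1},\dots,g_{l}\in N$. Then $t\le m(\bar H)\le m(S_{n})=n-1$ (symmetric groups are flat, by Whiston and Cameron--Cara), and $(g_{t+1},\dots,g_{l})$ is an irredundant subsequence lying inside $K$ (any subsequence of an irredundant sequence is irredundant), so $l-t\le i(K)$ and $m(H)\le(n-1)+i(K)$. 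Since $\bar H$ is transitive and $K\trianglelefteq H$, the projections $\pi_{j}(K)\le S$ are pairwise conjugate, so either all are proper in $S$ or all equal $S$. If all are proper: by \cref{lm16}, every $K'\le K$ has $m(K')\le\sum_{j}m(U_{j})$ with $U_{j}\le\pi_{j}(K)$ a proper subgroup of the strongly flat $S$, hence $m(U_{j})\le m-1$ and $i(K)\le n(m-1)$, giving $m(H)\le(n-1)+n(m-1)=mn-1$. If all equal $S$, then $K$ is a subdirect product of $S^{n}$; by \cref{lm19} each $\pi_{jk}(K)$ is $S^{2}$ or a graph of an automorphism, the ``linkage'' relation $j\sim k$ (latter case) is an equivalence relation since $S$ is simple, $\bar H$ permutes its classes transitively so they have equal size, and projecting onto one coordinate per class and invoking \cref{lm20} gives $K\cong S^{e}$ where $e$ is the number of classes, so $i(K)=me$. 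If $e<n$ then $e\le n/2$, hence $m(H)\le(n-1)+mn/2\le mn$.

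There remains $e=n$, i.e.\ $K=S^{n}$ by \cref{lm20}, so $H=\rho^{-1}(\bar H)=S\wr\bar H$; this is the crux, and I run the argument of \cref{thm10}. With $L=\langle h_{1},\dots,h_{t}\rangle$ (so $\rho(L)=\bar H$ transitive), $N$ is generated by $X=\{wg_{i}w^{-1}:w\in L,\ t<i\le l\}\cup(L\cap N)$, so $\pi_{1}(X)$ generates $S$ and some $\Gamma\subseteq X$ with $|\Gamma|\le m$ has $\pi_{1}(\Gamma)$ irredundant generating $S$; at most $m$ of the $g_{i}$ occur in $\Gamma$, so after reordering $K_{0}:=\langle h_{1},\dots,h_{t},g_{t+1},\dots,g_{t+a}\rangle\cap N$ has $a\le m$ and $\pi_{1}(K_{0})=S$. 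As above, $K_{0}$ is subdirect in $S^{n}$ with $K_{0}\cong S^{f_{0}}$, $f_{0}$ its number of linkage classes. Each remaining $g_{t+a+j}$ lies in $N$ but, by irredundancy, not in $\langle h_{1},\dots,h_{t+a+j-1}\rangle\cap N$, so the subdirect products $K_{j}:=\langle h_{1},\dots,h_{t+a},g_{t+a+1},\dots,g_{t+a+j}\rangle\cap N\cong S^{f_{j}}$ satisfy $f_{0}<f_{1}<\cdots\le n$; and since $K_{l-t-a}=H\cap N=S^{n}$ we get $l-t-a\le n-f_{0}\le n-1$. Therefore
\begin{equation*}
m(H)=l\le t+a+(n-1)\le (n-1)+m+(n-1)=2n+m-2\le mn,
\end{equation*}
the last step because $(n-1)(m-2)\ge 0$. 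This completes the induction, so $i(S\wr S_{n})\le mn$, and with the lower bound $i(S\wr P)=mn=i(S)\deg(P)$.

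The main obstacle is precisely the case $K=S^{n}$: the cheap bound $l-t\le i(S^{n})=mn$ wastes the term $t\le n-1$, so one must use that the block generators already build up most of $S^{n}$ under conjugation — the device of \cref{thm10}, which here has to run for an arbitrary transitive $\bar H$ (not only a $2$-transitive one), forcing me to replace the appeal to \cref{lm21} by the linkage-class structure supplied by \cref{lm19} and \cref{lm20}. The one case this scheme does not cover is $S\cong\mathbb{Z}_{p}$ (where $m=1$ and \cref{lm20} fails): the equality $i(\mathbb{Z}_{p}\wr P)=\deg(P)$ still holds, but its transitive step must instead be argued module-theoretically, replacing the linkage analysis by the fact that $\mathbb{F}_{p}^{n}$ is generated by few elements as a module over $\mathbb{F}_{p}[\bar H]$ when $\bar H$ is transitive.
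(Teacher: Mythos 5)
Your proof is correct for non-abelian $S$, but it follows a genuinely different route from the paper's. The paper does not induct on the degree: it takes an irredundant set, passes to the subgroup $H$ it generates, applies \cref{lm1} with respect to $H\cap S^{n}$, splits $\{1,\dots,n\}$ into the orbits $X_{1},\dots,X_{p}$ of the image, and then its key counting step (\cref{lm24}) sweeps through each orbit, charging at most $\max\{m+|X_{i}|-1,(m-1)|X_{i}|\}$ kernel generators per orbit --- using \cref{lm23} (strong flatness) when all projections are proper, and a coordinate-by-coordinate sweep via transitivity on the orbit plus simplicity (normal closures) when some projection is all of $S$; summing gives $i(G)\le (n-p)+\sum_{i}\max\{\cdot\}\le mn$. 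You instead reduce intransitive images to smaller degree by induction via \cref{lm16}, and in the transitive case you analyze $K=H\cap S^{n}$ through the diagonal/linkage-class structure of subdirect products of $S^{n}$ (\cref{lm19}, \cref{lm20}), splitting into: all projections proper, $K\cong S^{e}$ with $e\le n/2$, and $K=S^{n}$, the last handled by a \cref{thm10}-style argument in which each additional kernel generator strictly increases the number of linkage classes. Both proofs hinge on \cref{lm1} and the flatness of $S_{n}$; what differs is the kernel analysis, and your cruder bounds ($2n+m-2\le mn$, $(n-1)+mn/2\le mn$) suffice because $m\ge 2$. Two caveats: your strictly-increasing-classes step uses the full structure theorem for subdirect products of powers of a non-abelian simple group (that such a subgroup is a product of diagonals determined by its linkage partition and linking automorphisms), which is a little more than the literal statements of \cref{lm19} and \cref{lm20}, though standard and correctly sketched; and the abelian case $S\cong\mathbb{Z}_{p}$, which the theorem's wording formally admits, falls outside your main argument --- you flag this honestly, and it is a shared boundary issue, since the paper's own proof also implicitly restricts to $m\ge 3$ through the hypothesis of \cref{lm23} and the evaluation of the maxima in its final display, whereas your argument covers $m\ge 2$.
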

\begin{proof}
Suppose there is an irredundant set $\{h_1,\dotsc,h_{i(G)}\}$ in $G = S\wr P$. Let $H \leq G$ be the subgroup generated by these $h_i$. Let $N = S^n = \prod\limits_{i = 1}^n S_i \trianglelefteq G$ ($S_i \cong S$).

Using \hyperref[lm1]{Whiston}'s lemma for $H$ and its normal subgroup $(H \cap N) \trianglelefteq H$, we can assume that for some $g_1,\dotsc,g_l \in S^n$, and the images of $\{h_1, h_2,\dotsc, h_k\}$ under the projection map: $H \to H/(H \cap N)$, $\overline{h_1}, \overline{h_2},\dotsc, \overline{h_l}$ is an irredundant generating set of $K = H/(H \cap N) \cong HN/N \leq G/N \cong P \leq S_n$ ($i(G) = l + k$). 

There exists a partition $\mathbf{P} = \{X_1, X_2,\dotsc, X_p\}\ (p \geq 1)$ of $\{1, 2,\dotsc, n\}$ so that $K \leq \prod\limits_{j = 1}^pS_{X_j}$, and acts transitively on each of $X_i$. WLOG, assume that $X_i = \{d_{i - 1} + 1,\dotsc, d_i\}$, with $d_0 = 1$ and $d_p = n$

\begin{lem}\label{lm24}
For $1 \leq q \leq p + 1$ that, after reordering $g_i$, there exists $g_1, \dotsc, g_s$ with $s \leq \sum\limits_{i = 1}^{q-1} \max\{m + |X_i| - 1, (m - 1)|X_i|\}$ such that $\forall t \in \overline{1, l}$, $x_tg_t \in \prod\limits_{i = d_{q-1} + 1}^n S_i$ for some $x_t$ generated by $g_1,\dotsc, g_s$ and $h_1, h_2,\dotsc, h_k$ (Here $\prod\limits_{i = d_p + 1}^nS_i = \{1\}$).
\end{lem} 

\begin{proof}
We will prove \cref{lm24} by induction on $q$. For $q = 1$, we don't need to prove anything. Assume that the statement is true for $q$. We will prove that it is true for $q + 1$ ($1 \leq q \leq p$).

By induction hypothesis on $q$, there exists $g_1, g_2,\dotsc,g_s'$ with \\ $s' \leq \sum\limits_{i = 1}^{q-1} \max\{m + |X_i| -1, (m-1)|X_i| \}$ so that $T = \{x_ig_i\}_{i = 1}^l \subset \prod\limits_{i = d_{q-1} + 1}^n S_{i}$, with some $x_i$ generated by $g_1,\dotsc,g_{s'}$ and $h_1,\dotsc,h_k$. 

Now let $\pi_i : S^n = \prod\limits_{i = 1}^n S_i \to S$ be the projection map from $S^n$ to its $i$th factor, and $\pi_{i_1, i_2, \dotsc, i_k}: S^n \to \prod\limits_{j = 1}^k S_{i_j}$ be the projection map from $S^n$ to some of its factors (defined in \cref{nt2}). We have two following cases to consider: 

\vspace{3mm}
$\mathbf{Case\ 1}$: $\pi_i (\genby{T}) \neq S \ \forall i = \overline{d_{q-1} + 1, d_q}$. By \cref{lm23} for $\pi_{\{d_{q-1} + 1,\dotsc,d_q\}}(\genby{T})$, there exists $T' \subset T$ with $|T'| \leq (m - 1)(d_q - d_{q-1}) = (m-1)|X_q|$ so that $\pi_{\{d_{q-1} + 1,\dotsc,d_q\}}(\genby{T'})$ = $\pi_{\{d_{q-1} + 1,\dotsc,d_q\}}(\genby{T})$. Suppose that $T' = \{x_ig_i, i \in I\}$ for some $|I| \leq (m-1)|X_q|$. Hence, there exists $z_i$ generated by $\{x_ig_i\}_{i \in I}$ and, therefore, by $\{g_i\}$ with $i \in \{1, 2,\dotsc, s\} \cup I$ and by $\{h_1,\dotsc, h_k\}$ so that $x'_i g_i = z_ix_ig_i \in \prod\limits_{i = d_q + 1}^ n S_i \ \forall i \in \overline{1, l}$. By reordering $g_i$, we can assume that $I = \{s+1,\dotsc, s'\}$ with $s'  \leq s + (m-1)|X_q| \leq \sum\limits_{i = 1}^{q-1} \max\{m + |X_i| - 1, (m - 1)|X_i|\} + (m -1)|X_q| \leq \sum\limits_{i = 1}^{q-1} \max\{m + |X_i| - 1, (m - 1)|X_i|\}$, and because, $x'_i$ is generated by $\{g_1,\dotsc, g_{s'}\}$ and $\{h_1, h_2,\dotsc, h_k\}$, we can finish induction step here. 

\vspace{3mm}
$\mathbf{Case\ 2}$: If $\pi_i (\genby{T})  = S$ for some $i \in \overline{d_{q-1} + 1, d_q}$. WLOG, we can assume that $i = d_{q- 1} + 1 = d'$. Moreover, after reordering $q_i$, assume that images of $\{x_ig_i\}_{i = s' + 1}^{s''}$ under the map $\pi_{d'}$, which are $\pi_{d'}(x_{s'}g_{s'}), \dotsc, \pi_{d'}(x_{s''}g_{s''})$, generates $S$. ($s'' - s' \leq m = m(S) = i(S)$). As a result, there exists $y_i$ generated by $\{g_1,\dotsc, g_{s''}\}$ and $\{h_1,\dotsc, h_k\}$ so that $\{y_ig_i\}_{i = 1}^l \subset \prod\limits_{i = d_{q-1} + 2}^n S_i$

\begin{lem}\label{lm25}
For any $0 \leq j \leq |X_i| - 1$, after reordering $g_j$, there exists $z_i$ generated by $\{g_1, g_2,\dotsc, g_{j +s''}\}$ and $\{h_1, h_2,\dotsc, h_k\}$ so that $\{z_ig_i\}_{i = 1}^l \subset \prod\limits_{i = d' + 1 + j}^nS_i$
\end{lem}
\begin{proof}
We will prove \cref{lm25} by induction on $j$. For $j = 0$, this is true obviously. Suppose that this is true for $j - 1$. Then there exists $z_i$ generated by $\{g_1, g_2,\dotsc, g_{j - 1 + s''}\}$ and $\{h_1, h_2,\dotsc, h_k\}$ so that $\{z_ig_i\}_{i = 1}^l \subset \prod\limits_{i = d' + j}^n S_i$. We will prove that the statement is also true for $j$.

If $\{z_ig_i\}_{i = s'' + j}^l \subset \prod\limits_{i = d' + 1 + j}^n S_i$, then we are done already. Otherwise, there will be some $g_{i_0}$ with $s'' + j \leq i_0 \leq l$ so that $\pi_{d' + j}(z_{i_0}g_{i_0}) \neq 1_S$, and, by reordering $g_i$, we can assume that $g_{i_0} = g_{s'' + j}$ with $\pi_{d' + j}(z_{s'' + j}g_{s'' + j}) = u_0$

$K$ is transitive on $X_q = \{d_{q-1} + 1, \dotsc, d_q\}$ so there exists $\gamma$ generated by $\{h_i\}$ that moves $d_{q-1} + 1 + j = (d' + j)$th copy of $S$ to $d_{q- 1} + 1 = d'$th copy of $S$. Then there exists $x \in S$ depending on $\gamma$ so that $\forall k \in \prod\limits_{i = 1}^n S_i,\ \pi_{d' + j}(k') = \pi_{d' + j}(\gamma^{-1}k\gamma)$ = $x\pi_{d'}(k)x^{-1}$. Since $\pi_{d'}(x_{s'}g_{s'}), \cdots, \pi_{d'}(x_{s''}g_{s''})$ generate $S$, and $xSx^{-1} = S$, there exists $k' \in \prod\limits_{i = 1}^n S_i$ generated by $\{h_1, \dotsc, h_k\}$ and $g_1,\dotsc, g_{s''}$ so that $\pi_{d' + j}(k') = v$ for any $v \in S$.

Then, for any $v \in S$, $\pi_{d' + j}(k'z_{i_0}g_{i_0}k'^{-1}) = vu_0v^{-1}$ ($i_0 = s'' + j$) with some $k'z_{i_0}g_{i_0}k'^{-1} \in \prod\limits_{i = d' + j}^n S_i$ generated by $\{g_1,\dotsc, g_{s'' + j}\}$ and $\{h_1,\dotsc, h_k\}$. Because $S$ is simple, $S$ is generated by $\{vu_0v^{-1}, v\in S\}$. As a result, for any $u \in S$, there exists $k'' \in \prod\limits_{i = d' + j}^n S_i$ generated by $\{g_1,\dotsc, g_{s'' + j}\}$ and $\{h_1,\dotsc, h_k\}$ so that $\pi_{d' + j}(k'') = u$, and so for all $i \in \overline{1, l}$, there exists $z''_i \in \prod\limits_{i = d' + j}^n S_i$ generated by $\{g_1,\dotsc, g_{s'' + j}\}$ and $\{h_1,\dotsc, h_k\}$ so that $\pi_{d' + j}(z''_i) = \pi_{d' + j}(z_ig_i)^{-1}$. Hence, $\{z''_iz_ig_i\} \subset \prod\limits_{i = d' + j + 1}^n S_i$, and we finish induction step here.
\end{proof}

Applying \cref{lm25} for $j = d_q - d_{q - 1} - 1 = |X_q| - 1$, then we finish the induction step for \cref{lm24}

\end{proof}
Now by \cref{lm24}, we must have $l \leq \sum\limits_{i = 1}^p \max\{m + |X_i| -1, (m-1)|X_i|\}$. Also, by definition of partition $\mathbf{P}$, we must have $k \leq m(K) \leq n - p$.

As a result, 
\begin{equation*} 
\begin{split}
i(G) & \leq n - p + \sum\limits_{i = 1}^p \max\{m + |X_i| - 1, (m - 1)|X_i|\} \\
& = n - p + \sum\limits_{i: |X_i| = 1}m + \sum\limits_{i: |X_i| > 1}(m- 1)|X_i| \\
& = n - p + \sum\limits_{i: |X_i| = 1}m + (m - 1)(\sum\limits_{i: |X_i| > 1}|X_i|) \\
& = n - p + tm + (m - 1)(n - t) \ (t = |\{i: |X_i| = 1\}| \leq p) \\
& = n - p + tm + mn - n - tm + t \\
& = mn + t - p \leq mn
\end{split}
\end{equation*}
	
Now take any irredundant generating set $\{s_1,\dotsc, s_m\}$ of $S$.

Consider the set $H = \{(1, 1,\dotsc, s_i, 1,\dotsc, 1)\} \subset S^n$, where $s_i$ can be in any $j$th positions for $1 \leq j \leq n$, and $i$ can be any number between $1$ and $m$. It is easy to see that $H$ is irredundant, and because $S^n \leq S \wr P$, $i(S\wr P) \geq |H| = mn$.

As a result, $i(S \wr P) = mn$.
\end{proof}

\begin{cor}
$i(A_n \wr A_n) = n^2 - 2n$ for $n \geq 9$.
\end{cor}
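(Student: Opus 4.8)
The plan is to obtain this as an immediate specialization of \cref{thm11}, taking $S = A_n$ and letting $P = A_n$ act on $n$ points via its natural permutation representation. To invoke \cref{thm11} I need to check its hypotheses for $S = A_n$ in the range $n \geq 9$: that $A_n$ is simple, that it is strongly flat, and that the common value $m = m(A_n) = i(A_n)$ satisfies $m \geq 3$ (this last bound is implicitly required, since the proof of \cref{thm11} passes through \cref{lm23}, which assumes $m \geq 3$).

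First I would record that $A_n$ is simple for all $n \geq 5$, and in particular for $n \geq 9$. Next, by \cref{cor1}, $A_n$ is strongly flat for $n \geq 9$, and by the definition of (strongly) flat groups this forces $i(A_n) = m(A_n)$. It then remains to identify this common value: Whiston's results, used throughout Section~2 (e.g.\ in the statement of \cref{thm3} and in the proof of \cref{cor1}, where $m(A_n) = n-2$ is stated explicitly), give $m(A_n) = n - 2$. For $n \geq 9$ we have $m(A_n) = n - 2 \geq 7 \geq 3$, so the size condition holds, and the natural action of $P = A_n$ has degree exactly $n$.

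Assembling these facts, \cref{thm11} gives
\[
i(A_n \wr A_n) \;=\; i(A_n)\cdot \deg(A_n) \;=\; (n-2)\cdot n \;=\; n^2 - 2n
\]
for all $n \geq 9$, which is precisely the asserted formula.

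There is no real obstacle here; the corollary is a direct instance of \cref{thm11}. The only points demanding a moment's attention are confirming that the relevant copy of $P = A_n$ acts with degree $n$ (as opposed to some other permutation action), and explicitly noting $m(A_n) = n-2 \geq 3$ in the stated range so that every hypothesis of \cref{thm11} — including the $m \geq 3$ condition inherited from \cref{lm23} — is satisfied.
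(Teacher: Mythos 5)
Your proposal is correct and follows exactly the paper's route: the corollary is obtained by applying \cref{thm11} to $S = A_n$ (simple and, by \cref{cor1}, strongly flat for $n \geq 9$, so $i(A_n) = m(A_n) = n-2$) with $P = A_n$ in its natural degree-$n$ action. Your extra check that $m(A_n) = n-2 \geq 3$, needed for \cref{lm23} inside \cref{thm11}, is a welcome bit of diligence but does not change the argument.
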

\begin{proof}
This follows from \cref{thm11} and \cref{cor1}.
\end{proof}

\section{Acknowledgment}

I would like to thank Professor Keith Dennis and Ravi Fernando for their great guidance and assistance, and Summer Math Foundation Fellowship and my donor for providing me with financial support to participate in the summer research. I also thank the SPUR program and Cornell's math department for organizing this summer research program.

\newpage

\end{document}